\newcommand\setItemnumber[1]{\setcounter{enumi}{\numexpr#1-1\relax}}
  \newcommand\xqed[1]{%
    \leavevmode\unskip\penalty9999 \hbox{}\nobreak\hfill
    \quad\hbox{#1}}
      \newcommand\demo{\xqed{$\triangle$}}
      \newcommand\myshade{85}
      \colorlet{mylinkcolor}{violet}
      \colorlet{mycitecolor}{YellowOrange}
      \colorlet{myurlcolor}{RoyalBlue}
      \theoremstyle{plain}
      \newtheorem{Theorem}{Theorem}[section]
      \theoremstyle{definition}
      \newtheorem{Definition}{Definition}[Theorem]
      \theoremstyle{remark}
      \newtheorem{Remark}[Theorem]{Remark}
      \theoremstyle{remark}
      \theoremstyle{plain}
      \newtheorem{Lemma}[Theorem]{Lemma}
      \theoremstyle{plain}
      \theoremstyle{plain}
      \newtheorem{Proposition}[Theorem]{Proposition}
      \theoremstyle{remark}
      \theoremstyle{remark}
      \theoremstyle{remark}
      \newtheorem{Hypothesis}[Theorem]{Hypothesis}
      \theoremstyle{remark}
      \newcommand\RR{\mathbb{R}}
      \newcommand\NN{\mathbb{N}}
      \newcommand\KK{\mathrm{K}}
\newcommand{\ii}{\mathbf{i}}
\DeclareMathAlphabet{\pazocal}{OMS}{zplm}{m}{n}
\newcommand{\CC}{\mathcal{C}}
\newcommand{\DD}{\mathcal{D}}
\newcommand{\XX}{\mathrm{X}}
\newcommand{\YY}{\mathrm{Y}}
\newcommand{\QQ}{\mathrm{Q}}
\newcommand{\Sp}{\mathrm{S}}
\newcommand{\PP}{\mathrm{P}}
\newcommand{\EE}{\mathbb{E}}
\newcommand{\ZZ}{\mathrm{Z}}
\newcommand{\LL}{\mathcal{L}}
\newcommand{\norm}[1]{ \left\langle #1 \right\rangle}
\newcommand{\unit}{\mathbf{1}}
\newcommand{\HH}{\mathrm{H}}
\newcommand{\Mar}{\mathrm{M}}
\newcommand{\GSM}{GSM$^2$}
\newcommand{\Y}{\mathbf{Y}}
\newcommand{\Meas}{\mathcal{M}_F}
\newcommand{\Ind}[1]{\mathbbm{1}_{\left \{#1\right \}}}
\newcommand{\MM}{\mathcal{M}}
\newcommand{\MMPP}{\mathcal{M}_{\mathrm{F}}}
\newcommand{\RG}{\mathrm{r}}
\newcommand{\AG}{\mathrm{a}}
\newcommand{\BG}{\mathrm{b}}
\newcommand{\PN}{\mathrm{N}}
\newcommand{\PNC}{\tilde{\mathrm{N}}}
\newcommand{\DG}{\dot{\mathrm{d}}}
\newcommand{\PG}{\mathrm{p}}
\newcommand{\NXK}{\nu^{\mathrm{X};\mathrm{K}}}
\newcommand{\NYK}{\nu^{\mathrm{Y};\mathrm{K}}}
\title{A spatial measure-valued model for radiation-induced DNA damage kinetics and repair under protracted irradiation condition}
\author{Francesco G. Cordoni}
\date{\today}
\begin{document}

\maketitle   

\begin{abstract}
In the present work, we develop a general spatial stochastic model to describe the formation and repair of radiation-induced DNA damage. The model is described mathematically as a measure-valued particle-based stochastic system and extends in several directions the model developed in \cite{cordoni2021generalized,cordoni2022cell,cordoni2022multiple}. In this new spatial formulation, radiation-induced DNA damage in the cell nucleus can undergo different pathways to either repair or lead to cell inactivation. The main novelty of the work is to rigorously define a spatial model that considers the pairwise interaction of lesions and continuous protracted irradiation. The former is relevant from a biological point of view as clustered lesions are less likely to be repaired, leading thus to cell inactivation. The latter instead describes the effects of a continuous radiation field on biological tissue. We prove the existence and uniqueness of a solution to the above stochastic systems, characterizing its probabilistic properties. We further couple the model describing the biological system to a set of reaction-diffusion equations with random discontinuity that model the chemical environment. At last, we study the large system limit of the process. The developed model can be applied to different contexts, with radiotherapy and space radioprotection being the most relevant. Further, the biochemical system derived can play a crucial role in understanding an extremely promising novel radiotherapy treatment modality, named in the community \textit{FLASH radiotherapy}, whose mechanism is today largely unknown.
\end{abstract}
\tableofcontents

\section{Introduction}

Radiotherapy is, today, a widely used treatment against cancer \cite{thariat2013past}. Conventional radiotherapy is based on X-rays, i.e. photons, but in the last decades constantly increasing attention has been devoted to advanced radiotherapy treatment with ions \cite{durante2016nuclear}. Ion beams have many essential features making them preferable compared to photons, related mostly to the extremely localized energy released in tissues which can lead to a superior biological effect than X-rays. The effect of radiation on biological tissue has been studied by the community over the last decades, and DNA is believed to be the most sensitive target to radiation so DNA damage is the most relevant biological vehicle that leads to cell killing induced by radiation, \cite{durante2010charged}. Despite the potential superiority of hadrons in theory, additional research is crucial to incorporate this treatment modality into clinical practice fully. One of the primary obstacles to the widespread use of hadrons is in fact accurately estimating the biological effect caused by radiation, a crucial aspect to account for in order to prescribe the best possible treatment. Mathematical models have thus been developed over the years to understand and accurately predict the biological effect of ions on biological tissue, \cite{bellinzona2021linking,hawkins1994statistical,hawkins2013microdosimetric,kellerer1974theory,herr2015comparison,pfuhl2020prediction,cordoni2021generalized}, focusing on the DNA damage Double Strand Breaks (DSB). Such mathematical approaches focus on developing models that describe the formation, evolution, and interaction of DSB, with the final goal of predicting the probability that a certain cell survives radiation.

To date, very few models in the context of radiotherapy have a robust mathematical and probabilistic background even if the community widely acknowledges stochastic effects play a major role in the biological effect of radiation. In fact, despite the early development of stochastic models for the description of the kinetic repair of radiation-induced DNA damages, \cite{sachs1990incorporating,albright1989markov}, the radiobiological community soon drifted to developing deterministic models of damage repair assuming Poisson fluctuations of the number of damages around the average values, \cite{bellinzona2021linking}. This type of modelization is strictly linked to a linear-quadratic description of the relation between the logarithm of the cell-survival probability and the absorbed dose, a physical quantity that describes the energy deposited by the particles over the mass of the biological tissue traversed by the particles. Although such models provide a fast way to assess the cell survival fraction, which is a key aspect for a use of such models in clinical applications in which the run time of a model is extremely relevant, in recent years, the need began to be felt for more robust modeling from a purely probabilistic point of view. From a mathematical point of view, the Generalized Stochastic Microdosimetric Model (GSM$^2$) recently introduced in \cite{cordoni2021generalized,cordoni2022cell,cordoni2022multiple}, appears to be a general mathematical model, that includes several relevant stochastic effects emerging in the creation, repair, and kinetics of radiation-induced DNA-damages, \cite{cordoni2022multiple}. GSM$^2$ considers two types of DNA lesions $\XX$ and $\YY$, representing respectively lesions that can be repaired and lesions that lead to cell inactivation. In the current context, the specific exact meaning of sub-lessons is left unspecified. This is because there are mainly two different ways that cells can be affected by radiation. One is the creation of DNA \textit{Double-Strand Breaks} (DSB) from two \textit{Single-Strand Breaks} (SSB), and the other is the formation of chromosome abnormalities from pairs of chromosome breaks, \cite{kellerer1974theory}. Both of these mechanisms are important in understanding how cells respond to radiation and can be described by the model developed in this work.

The present paper aims at extending GSM$^2$ to include a spatial description allowing for reaction rates that depend on the spatial position, lesion distance, and density. In fact, a true spatial distribution of DNA damage inside the cell nucleus is today almost completely missing in existing models. At the same, it is widely known in the community that the spatial distribution of DNA damages strongly affects the probability that a cell repairs the induced damages, so spatial stochastic plays a major role in the modelization of the repair of radiation-induced DNA damages. We will thus model the spatial distribution of DNA damages as a general measure-valued stochastic particle-based system, characterizing existence and uniqueness as well as some relevant martingale properties that, as standard, will play a crucial role in the derivation of the large system limit. Stochastic particle-based systems have been long studied in the mathematical community, \cite{bansaye2015stochastic,popovic2011stochastic,pfaffelhuber2015scaling}. Recently, a lot of attention has been devoted to studying the spatial non-local stochastic particle-based system, \cite{bansaye2010branching,fontbona2015non,champagnat2007invasion,ayala2022measure}, where a measure-valued stochastic process describes the population. The population can interact according to a specific rate leading to either the creation or removal of individuals. Mathematically, these systems are described by Stochastic Differential Equations (SDE) driven by Lévy-type noises that besides a diffusive component include jump operators in the form of Poisson random measure, that account for the creation and removal of individuals from the population, \cite{bansaye2015stochastic}. Most results focus on birth-and-death spatial processes, meaning that at each time, at most, a single individual can be born or die. In this setting, pairwise interactions, involving either the creation or removal of more than one individual, are not allowed. Such interactions are relevant in many biological and chemical applications so a general mathematical theory that extends and generalizes the birth and death process is greatly desirable. Recently, few papers appeared that include pairwise reactions, \cite{isaacson2022mean,lim2020quantitative,popovic2020spatial}, but none of these deal with the existence and uniqueness or regularity of results for such systems, where instead the focus is mostly on the large-population limit.

The developed model includes some key features that make the mathematical treatment of the spatial model non-trivial. First, given the application considered, where clusters of DNA lesions are more difficult to repair by the cell and has been recognized as one of the main factors that lead to cell inactivation in radiobiology, \cite{kellerer1974theory}, we will include pairwise interaction and second-order rates, meaning that a couple of lesions can interact to create an unrepairable lesion that inactivates the cell. It is worth stressing that, already, many existing radiobiological models include parameters to account for the interaction of damages, \cite{hawkins1994statistical,sato2012cell,hawkins2013microdosimetric,bellinzona2021linking,cordoni2021generalized,cordoni2022multiple}. Still, none have a true mathematical spatial formulation and often rely on fixed domains to limit pairwise interaction within a certain distance neglecting nonetheless any true spatiality inside a fixed domain. The latter approach can be restrictive and may lead to overfitting with the inclusion of unnecessary parameters. One of the proposed model's main strengths is that it considers a true spatial distribution of lesions, allowing for true pairwise interaction that can depend on the distance between lesions, which is a novel and important aspect of the model. As mentioned, the existence and uniqueness results for second-order systems are rare and yet a general theory is missing, so the derived results represent a novelty both from a radiobiological as well as a mathematical perspective.

Another key aspect of the studied model is that we explicitly consider the case of protracted irradiation, that is we consider the situation in which a continuous radiation field induces a random number of lesions in the cell. Such a situation is non-trivial from a purely mathematical perspective as the generation of a random number of damages must be considered. Nonetheless, it is extremely relevant to include protracted irradiation in a biological model since it allows us to better estimate the kinetics repair of radiation-induced damage with benefits both in radioprotection and clinical application. Existing radiobiological models account to a certain extent for the protracted irradiation case, \cite{inaniwa2013effects,manganaro2017monte}, and a similar rate has already been considered in \cite{cordoni2021generalized} in a non-spatial setting. In generalizing the setting to include a spatial description, we need to describe a spatial energy deposition pattern within the cell nucleus. We will build such a theory properly generalizing some existing approaches. Nonetheless, a robust theory to account for the spatial formation of radiation-induced DNA lesions is missing and future efforts will be made to derive such a theory. In fact, another relevant aspect of the studied model is the computation of the spatial distribution of radiation-induced DNA damages. Such a description is relevant both in the protracted case and in the instantaneous irradiation case as it describes the initial damage distribution. In particular, the initial damage distributions, $\nu^\XX_0$, and $\nu^\YY_0$ can be computed using different methods. One possible approach is the use of Monte Carlo (MC) track structure codes, \cite{nikjoo2006track}, to simulate the passage of charged particles in biological tissue and their energy release and to thus estimate the DNA damage distribution caused by radiation. MC track structure codes have been shown to be effective in accurately characterizing DNA damage formation, \cite{goodhead1994initial,ottolenghi1995quality,a2000kinetics,chatzipapas2022simulation,kyriakou2022review,zhu2020cellular,thibaut2023minas}, however, once the initial damage distribution is computed, in order to assess the cell survival probability, these models typically neglect the spatial distribution of damages and focus on average values described by \textit{Ordinary Differential Equations} (ODE). The model developed in this research is unique in that it is able to fully exploit the accuracy of the spatial distribution of damages as predicted by MC track-structure codes. Further, since MC track structure codes simulate all the energy released by a particle along its path, which is referred to in the community as track, as well as all secondary energy releases associated with the original particle, the computational time is extremely demanding. To shorten the computational time, a threshold on energy release can be applied. so that all events that release lower than a certain energy are neglected and incorporated into the deposition that has originated it. Such an approach is called \textit{condensed history} MC, \cite{agostinelli2003geant4}, and it provides accurate results of energy deposition at a lower computational time compared to MC track structure codes.

An alternative approach to MC track-structure codes would be to develop an analytical model for DNA damage formation and distribution. Such a model would be less accurate, but less computationally expensive. Currently, the \textit{Local Effect Model} (LEM), \cite{friedrich2012calculation}, and the \textit{Microdosimetric Kinetic Model} (MKM), \cite{hawkins1994statistical,kase2007biophysical}, which are the only models used in \textit{Treatment Planning Systems} (TPS), take into account the spatial distribution of the absorbed dose without MC codes. These models are based on the \textit{Amorphous Track} (AT) model, \cite{kase2007biophysical}, which parametrizes the dose distribution around a track of a particle. However, to eventually assess the cell-survival probability, both models make extensive use of fixed domain so that a true spatial distribution of damages is again neglected. It is worth further stressing that, although the AT model can be used to compute the imparted dose in a fast fashion, it is based on several assumptions, such as the so-called track-segment condition (assumes tracks do not lose energy when traversing the cell nucleus) and uniform radiation fields (cylindrical geometry is often assumed for the cell nucleus and tracks are perpendicular to the cell nucleus). Being the former approach based on track-structure codes beyond the scope of the present work, we will focus on the latter one. Nonetheless, future research will focus on developing a comprehensive analytical model for DNA damage formation that accurately describes energy and spatial stochasticity.

The developed spatial DNA-damage model is expected to play a relevant role in the modelization of a novel radiotherapeutic technique, named in the community FLASH radiotherapy \cite{favaudon2014ultrahigh}. Up to 2014, most of the mechanisms happening in the interaction of radiation with biological tissue were believed to be known, and therefore in the last two decades models focused on specific applied aspects and general and robust mathematical theories were strongly believed to be unnecessary given the overall understanding of the problem at hand. Starting in 2014 a series of ground-breaking papers \cite{favaudon2014ultrahigh,montay2017irradiation,vozenin2019advantage} finally showed that an increase in the rate of delivery of ions at Ultra-High Dose Rates (UHDR), namely a high amount of energy released in a small fraction of time, spares healthy tissue and yet maintains the same effect on the tumor. This effect, which was completely unexpected, represents the final goal of any radiotherapeutic treatment \cite{esplen2020physics,griffin2020understanding}. All available models brutally failed to predict such peculiar effects and hundreds of publications have appeared recently trying to understand the mechanism at the very core of the FLASH effect \cite{labarbe2020physicochemical,abolfath2020oxygen,abolfath2020dna,liew2021deciphering,petersson2020quantitative}; many physical explanations and mathematical models have been proposed in the last five years,  but up to date, no model is believed to be capable neither of predicting nor understanding the origin of the FLASH effect \cite{weber2022flash}. Two facts are today believed to be at the very core of the FLASH effect: (i) this effect has its origin in a spatial interaction of ions, that involves besides the physics of ions and biology, also chemistry, and (ii) nonlocal effects of ions while traversing different cells and how their spatial interaction affects the overall chemical and biological environment. The model developed in the present research could, when coupled with an adequate description of the chemical environment affected by radiation, help unravel the mechanism behind the FLASH effect. In order to do that, we couple the spatial model with a reaction-diffusion equation that describes the evolution of the chemical system. In the following treatment, we will not specify a particular chemical description. This is because there are several possible choices, and the choice depends on the specific application. The chemical stage can be broadly divided into two stages: (i) the heterogenous chemical stage and (ii) the homogeneous chemical stage. The former is characterized by a heterogeneous spatial distribution and occurs immediately after particles hit the cell nucleus, i.e. between $10^{-12}$ seconds and $10^{-6}$ seconds. The latter is characterized by a homogeneous distribution and occurs after the heterogeneous stage, i.e. between $10^{-6}$ seconds to $10^0$ seconds. Mathematically, this means that while the homogeneous stage can be described by ODEs that characterize the time evolution of the concentration of chemicals within the domain, \cite{labarbe2020physicochemical,abolfath2020oxygen}, the heterogeneous stage requires advanced mathematical tools since the system is highly nonlinear and the reactions occur locally. Therefore, most of the literature is devoted to the development of simulation codes,  \cite{clifford1986stochastic,pimblott1991stochastic,boscolo2020impact,ramos2020let}. To date, there is no general mathematical formulation via local reaction-diffusion PDEs that exists in literature, even though it could provide an accurate representation of the system and fast computational time. Also, general results of well-posedness that cover relevant non-local chemical systems are not available in the literature due to the highly complex mathematical formulation needed. For these reasons, a deep mathematical study of such a system is left for future research. Relevant systems to study could include a spatial non-local version of the homogeneous chemical systems presented in \cite{labarbe2020physicochemical,abolfath2020oxygen} or an analytical formulation in terms of highly dimensional non-local reaction-diffusion PDEs of \cite{boscolo2020impact,ramos2020let}. In this paper, we will instead limit ourselves to considering a general reaction-diffusion PDE with coefficients satisfying rather general assumptions that could in principle include several relevant examples. In particular, almost any chemical description includes bimolecular reactions, meaning that the resulting PDE has quadratic terms. A significant effort has been made in the literature to study reaction-diffusion PDEs under the most general assumptions on the coefficients in order to include as many examples as possible, \cite{pierre2010global}. In this direction, a mass control assumption has been typically seen as a general condition that allows obtaining the existence and uniqueness of the equations in many cases dropping the standard global Lipschitz condition. For the sake of simplicity, we will consider the system introduced in \cite{fellner2020global}, which allows for quadratic growth of the coefficients, adding random discontinuity due to the effect of radiation. Therefore, we prove the existence and uniqueness of a stochastic particle-based system coupled with a reaction-diffusion system with random jumps. We will show later that this system allows generalizing some relevant systems. A future effort will be devoted to the study of general local reaction-diffusion systems similar to the ones studied in \cite{isaacson2022mean}.

At last, we will also characterize the large-system behavior. Such a limiting system can be obtained with standard arguments proving the tightness of the measure and identifying the listing process, which can be proved to admit a unique solution. Although the techniques are standard, the result is new in literature since no stochastic system allowing for pairwise interaction and creation of random numbers of particles has never been studied before. In fact, The resulting governing equation can be useful to study the behavior of the system at high doses, where the number of damages within the cell increases arbitrarily. It is worth stressing that, the high-dose case is recognized to be non-trivial and most of the existing models fail to predict the behavior of the system at high doses. For this reason, often, correction terms are included in the model to better math experimental data, \cite{bellinzona2021linking}.

The main contributions of the present paper are:
\begin{description}
\item[(i)] to provide a general mathematical description of a spatial model governing the formation and kinetics of radiation induced-damages;\\
\item[(ii)] to study the well-posedness of a measure-valued stochastic particle system with pairwise interaction and random creation of damages;\\
\item[(iii)] to propose a multi-scale model to couple biology and chemistry that could possibly describe the FLASH effect;\\
\item[(iv)] to study the large-system limit of the system with pairwise interaction and protracted irradiation.
\end{description}

\section{The microdosimetric master equation}\label{SEC:GSM2}

The main goal of the present section is thus to introduce the classic setting for the GSM$^2$ \cite{cordoni2021generalized,cordoni2022cell}. GSM$^2$ models the time-evolution of the probability distribution of the number of lethal and sub-lethal lesions denoted by $\left (\XX(t),\YY(t)\right )$, where $\XX$ and $\YY$ are two $\mathbb{N}-$valued random variables counting the number of the lethal and sub-lethal lesion, respectively. In the following, we will consider a standard complete filtered probability space $\left (\Omega,\mathcal{F},\left (\mathcal{F}_t\right )_{t \geq 0},\mathbb{P}\right )$ satisfying usual assumptions, namely right--continuity and saturation by $\mathbb{P}$--null sets.

We thus assume that a sub-lethal lesion $\XX$ can undergo three different pathways: (i) at rate $\RG$ a sub-lethal lesion is repaired, (ii) at rate $\AG$ a sub-lethal lesion is left unrepaired by the cell and thus it becomes a lethal lesion and (iii) at rate $\BG$ two sub-lethal lesion form a cluster that cannot be repaired by the cell and thus become a lethal lesion. Any lethal lesion leads to cell inactivation. These three pathways can be summarized as follows
\begin{equation}\label{EQN:Reac}
\begin{split}
& \XX \xrightarrow{\RG} \emptyset\,,\\
& \XX \xrightarrow{\AG} \YY\,,\\
& \XX + \XX \xrightarrow{\BG} \YY\,.\\
\end{split}
\end{equation}

Denoting by
\[
p(t,y,x) = \mathbb{P}\left (\left (\YY(t),\XX(t)\right ) = \left (y,x\right )\right )\,,
\] 
the probability to have at time $t$ exactly $x$ sub-lethal lesion and $y$ lethal lesions, following \cite{cordoni2021generalized}, we can obtain the \textit{microdosimetric master equation} (MME)
\begin{equation}\label{EQN:Master}
\begin{cases}
\frac{\partial}{\partial t} p(t,y,x) &= \left (E^{0,1} -1\right )\left [x r p(t,y,x)\right ] + \left (E^{-1,1} -1\right )\left [x \AG p(t,y,x)\right ] + \left (E^{-1,2} -1\right )\left [x(x-1) \BG p(t,y,x)\right ] \,,\\
p(0,y,x) &= p_0(y,x)\,,
\end{cases}
\end{equation}
where we have denoted the creation operator as
\[
\left (E^{i,j}-1\right )\left  [f(y,x)\right  ] := f(y+i,x+j) - f(y,x)\,.
\]

In \cite{cordoni2022cell} it is derived a closed-form solution for the survival probability as predicted by the MME \eqref{EQN:Master}, defined as the probability of having no lethal lesions $\YY$. Further, GSM$^2$ is closely connected with one of the most used radiobiological models to predict the survival probability of cell nuclei when exposed to ionizing radiation, that is the Microdosiemtric Kinetic Model (MKM) \cite{hawkins1994statistical}. The main equations of the MKM describe the time-evolution of the average value $\bar{y}$, resp. $\bar{x}$, of the number of lethal, resp. sub-lethal, lesions, and are given by
\begin{equation}\label{EQN:LQM}
\begin{cases}
\frac{d}{dt} \bar{y}(t) =  \AG \bar{x} + b \bar{x}^2\,,\\
\frac{d}{dt} \bar{x}(t) = - (\AG+\RG) \bar{x} - 2 \BG \bar{x}\,.\\
\end{cases}
\end{equation}
The model further assumes that $\bar{y}$ is the average of a Poisson random variable so that by describing the average values we have complete knowledge of all the moments.

To obtain a suitable analytical solution to the equations \eqref{EQN:LQM}, it is often assumed that $(\AG + \RG) \bar{x} >> 2\BG \bar{x}$, so that above equation is reduced to
\begin{equation}\label{EQN:LQM2}
\begin{cases}
\frac{d}{dt} \bar{y}(t) =  \AG \bar{x} + b \bar{x}^2\,,\\
\frac{d}{dt} \bar{x}(t) = - (\AG+\RG) \bar{x} \,.\\
\end{cases}
\end{equation}
This highlights why in the high dose case the MKM must be corrected including additional terms. In fact, even if it is typically true that $(\AG + \RG) >> 2\BG$, at sufficiently high doses, the number of lesions $\bar{x}$ increases so that $(\AG + \RG) \bar{x}$ does not dominate anymore $2\BG \bar{x}$ and therefore the omission of the term in equation \eqref{EQN:LQM2} becomes non-negligible.

Further, it has been shown in \cite{cordoni2021generalized}, that the average of the MME coincides with the MKM equations \eqref{EQN:LQM} under a suitable \textit{mean-field assumption}, that us
\[
\mathbb{E}\left[\XX(t)(\XX(t)-1)\right ] \approx \mathbb{E}\left[\XX(t)\right ]^2\,,
\]
which in turn coincides exactly with the requirement that $\XX$ follows a Poisson distribution. It has thus been shown in \cite{cordoni2022multiple} that the GMS$^2$ is able to give a more general description of many stochastic effects relevant in the formation and repair of radiation-induced DNA lesions that play a crucial role in estimating the surviving probability of a cell nucleus.

It can be further shown, \cite{weinan2021applied,bansaye2015stochastic}, that equation \eqref{EQN:Master} describes the time evolution for the probability density function associated with the following \textit{stochastic differential equation} (SDE)
\begin{equation}\label{EQN:SDE}
\begin{cases}
\YY(t) &= \YY_0 + \int_0^t \int_{\RR_+} f^{\YY}(\XX(s^-),z) N^{\YY}(ds,dz)\,,\\
\XX(t) &= \XX_0 - \int_0^t \int_{\RR_+} f^{\XX}(\XX(s^-),z) N^{\XX}(ds,dz)\,,\\
\end{cases}
\end{equation}
with
\begin{equation}\label{EQN:SDECoeff}
\begin{split}
f^{\YY}(\XX(s^-),z) &= \Ind{z \leq \AG \XX(s^-)} + \Ind{\AG \XX(s^-) \leq z \leq \AG \XX(s^-) + \BG \left (\XX\right )^2(s^-)}\,\\
f^{\XX}(\XX(s^-),z) &= \Ind{z \leq (\AG+\RG) \XX(s^-)} + 2 \Ind{(\AG + \RG) \XX(s^-) \leq z \leq (\AG + \RG) \XX(s^-) + \BG \left (\XX\right )^2(s^-)}\,.
\end{split}
\end{equation}

Above in equation \eqref{EQN:SDE}, $N^Y(ds,dz)$ and $N^X(ds,dz)$ are two independent Poisson point measure with intensity $ds\,dz$ on $\RR_+ \times \RR_+$, see, e.g. \cite{applebaum2009levy}. The main of the present work will be to provide a spatial description of the SDE \eqref{EQN:SDE} so that $\XX$ and $\YY$ are replaced by random measures.

\subsection{On the initial distribution}\label{SEC:Init}

In order to later generalize the initial damage distribution, we introduce in the current section the distribution introduced in \cite{cordoni2021generalized,cordoni2022cell,cordoni2022multiple}. For a detailed treatment, we refer the interested reader to the mentioned papers or to \cite{bellinzona2021linking}.

Among the most powerful approaches to describe the formation of DNA lesions is using microdosimetry \cite{zaider1996microdosimetry}. Microdosimetry is the branch of physics that investigates the energy deposition in domains comparable to cell nuclei, that is of the order of some microns. At that scale, energy deposition is purely stochastic, so the main object used in microdosimetry are random variable and their corresponding distributions. Over the years many models have been developed based on microdosimetric principles, \cite{kellerer1974theory,zaider1996microdosimetry}, and both the MKM and GSM$^2$ assess the formation of DNA lesions using microdosimetry, \cite{hawkins1994statistical,bellinzona2021linking,cordoni2021generalized,cordoni2022cell}.

The main microdosimetric quantity of interest from the point of view of radiobiological models is the \textit{specific energy} $z$, \cite{zaider1996microdosimetry}. The \textit{specific energy} $z$ is the ratio between energy imparted by a finite number of energy depositions $\varepsilon$ over the mass $m$ of the matter that has received the radiation, that is
\[
z = \frac{\varepsilon}{m}\,.
\]

The stochastic nature of $\varepsilon$ implies that also $z$ is inherently stochastic. The single--event distribution $f_{1}(z)$ of energy deposition on a domain, \cite{zaider1996microdosimetry}, is the probability density distribution describing the energy deposition due to a single event, typically a particle traversing the domain. Such distribution is associated with a random variable $Z$ that describes the specific energy imparted on a certain domain of mass $m$. The average values of the random variable $Z$, referred to in the literature as \textit{fluence-average specific energy}, that is the mean specific energy deposition, is typically denoted in literature as $z_F$. By additivity property, the specific energy distribution resulting from $\nu$ tracks can be computed convolving $\nu$ times the single event distribution, \cite{zaider1996microdosimetry}. Therefore, the distribution $f_{\nu}$ of the imparted energy $z$ is computed iteratively as
\[
\begin{split}
f_{2}(z) &:= \int_0^\infty f_{1}(\bar{z})f_{1}(z-\bar{z})d\bar{z}\,,\\
&\dots\,,\\
f_{\nu}(z) &:= \int_0^\infty f_{1}(\bar{z})f_{\nu-1}(z-\bar{z})d\bar{z}\,.\\
\end{split}
\] 

We denote by $p_e(\nu|D,z_F)$ a discrete probability density distribution denoting the probability of registering $\nu$ events. Typically such distribution is assumed to be dependent on the total dose absorbed by the mass and the fluence of the incident particles. The standard assumption is that, since events are in a microdosimetric framework assumed to be independent, the distribution $p_e$ is a Poisson distribution of average $\frac{D}{z_F}$ so that we have
\[
p_e(\nu|D,z_F) :=  \frac{e^{- \frac{D}{z_F}}}{\nu!}\left (\frac{D}{z_F}\right )^{\nu}\,.
\]

Therefore, microdosimetry postulates that the actual energy deposition on a certain domain can be obtained via the \textit{multi-event specific energy distribution}
\[
f(z|D) := \sum_{\nu = 0}^\infty \frac{e^{- \frac{D}{z_F}}}{\nu!}\left (\frac{D}{z_F}\right )^{\nu} f_{\nu;d}(z)\,.
\]

At last, given a certain specific energy deposition $z$ by $\nu$ events, the induced number of lethal and sub-lethal lesions is again a random variable, with a discrete probability density function denoted by $p$. In general the average number of lethal, resp. sub-lethal, lesions is assumed to be a function of $z$, namely $\kappa(z)$, resp. $\lambda(z)$. Again, by independence on the number of created lesions, such distribution is assumed to be a Poisson distribution. Overall, the probability of inducing $x$ sub-lethal and $y$ lethal lesions can be computed as, \cite{cordoni2021generalized},
\begin{equation}\label{EQN:InitDistGSM2}
p_0(x,y) = \sum_{\nu = 0}^\infty \int_0^\infty p(x,y|z) p_e(\nu|D,z_F) f_{\nu}(z) dz \,,
\end{equation}
or assuming Poissonian distributions
\begin{equation}\label{EQN:InitDistGSM2Pois}
p_0(x,y) = \sum_{\nu = 0}^\infty \int_0^\infty e^{-\bar{\kappa}(z)}\frac{\left(\bar{\kappa}(z)\right)^x}{x!}e^{-\bar{\lambda}(z)}\frac{\left(\bar{\lambda}(z)\right)^y}{y!}\frac{e^{- \frac{D}{z_F}}}{\nu!}\left (\frac{D}{z_F}\right )^{\nu} f_{\nu}(z) dz \,,
\end{equation}
for suitable functions $\kappa(z)$ and $\lambda(z)$. These quantities summarize the free-radical reactions that result in a lesion. It is a function of the type of ionizing particle, details of the track structure, radical diffusion, and reaction rates, the point in the cell cycle, and the chemical environment of the cell. In the following, we will explicitly model these functions so that they depend on chemical concentration.

The classical assumption, which has been also considered in \cite{cordoni2021generalized}, is to assume such functions to be linear in $z$. Notable enough, it has been shown in \cite{cordoni2022cell} that, also assuming a Poissonian distribution on both $p_e$ and $p$, the resulting discrete probability density function \eqref{EQN:InitDistGSM2Pois} is not a Poisson distribution; as a matter of a fact, it has been shown to be a microdosimetric extension of the so-called \textit{Neyman distribution}, \cite{neyman1939new}, which is a well-known distribution in radiobiological modeling to treat the number of radiation-induced DNA damages. To have a better grasp on the distribution \eqref{EQN:InitDistGSM2}, it can be described by a stochastic chain of interconnected events: (i) given a certain dose $D$ and fluence average specific energy $z_F$, a given random number of events $\nu$ is registered in a cell nucleus; then (ii) such $\nu$ events deposits a certain random specific energy $z=z_1+\dots+z_\nu$. At last, (iii) the specific energy deposited $z$ induces a random number of lethal and sub-lethal lesions $y$ and $x$.

\section{The spatial radiobiological model}\label{SEC:SpatGSM2}

The current section aims at generalizing the radiobiological model as introduced in Section \ref{SEC:GSM2} to consider a spatial measure-valued process. Consider a closed bounded regular enough domain $\QQ \subset \RR^d$, $d \geq 1$, which should represent a cell nucleus. We assume that $\QQ$ has a smooth boundary $\partial \QQ$, and denote by $n(q)$ the outward normal direction to the boundary $\partial \QQ$ at the point $q$.

We consider two possible types of DNA damage, $\Sp = \{\XX,\YY\}$, where $\XX$ denotes sub--lethal lesions and $\YY$ are lethal lesions. We assume sub-lethal and lethal lesions can undergo three different pathways, $a$, $b$, and $r$, as introduced in Section \ref{SEC:GSM2}.

We consider thus a process that lives in the state space 
\[
\PP := \QQ \times \Sp \ni P_i = \left (q_i,s_i\right)\,,
\]
encoding the i-th lesion position $q_i$ and type $s_i$. For a metric space $E$, we define by $\Meas(E)$ the space of finite measure over $E$, endowed with the weak topology; given a regular enough function $f : E \to \RR$ and a measure $\nu \in \Meas(E)$, $\Meas(E)$ is equipped with 
\[
\norm{f,\nu}_E:=\int_E f(x)\nu(dx)\,.
\]
Also, we denote by $\MM(E)$ the space of point measure over $E$, defined as
\[
\MM(E) := \left\{\sum_{i=1}^N \delta_{x_i} \,:\, x_i \in E\,,\, N \in \mathbb{N} \right \}\,,
\]
equipped with, for $f : E \to \RR$ and a measure $\nu \in \MM(E)$,
\[
\norm{f,\nu}_E:= \sum_{i=1}^N f(x_i)\,.
\]

In general, in the following, we will often consider either $E=\PP$ or $E=\QQ$; if no confusion is possible we will omit the subscript in the scalar product.

Fix a finite time horizon $T<\infty$, for $t \in [0,T]$, we define the concentration measure of lesion at time $t$, as
\begin{equation}\label{DEF:Nu1}
\nu(t) := \sum_{i=1}^{N(t)} \delta_{P_i(t)} = \sum_{i=1}^{N(t)} \delta_{Q_i(t)}\delta_{s_i}\,,
\end{equation}
with
\[
N(t) = \norm{\unit,\nu(t)}\,,
\]
the total number of lesions at time $t$. We further denote by $\nu^\XX(t)$ and $\nu^\YY(t)$ the marginal distributions
\begin{equation}\label{DEF:Nu2}
\nu^\XX(t)(\cdot) := \nu^\XX(t)\left(\, \cdot\,,\XX\right)\,,\quad \nu^\YY(t)(\cdot) := \nu^\YY(t)\left(\, \cdot\,,\YY\right)\,.
\end{equation}
Analogously to the previous notation, $N^\XX(t)$, resp. $N^\YY(t)$, denote the total number of lesions of type $\XX$, resp. $\YY$, at time $t$.

Besides lesion concentration we will often use a vector listing all lesions in the system; thus, given a system state $\nu(t)$, we denote by
\[
\HH(\nu(t)) := \left (\left(q^{1;\XX}(t),\XX\right),\dots,\left(q^{N^\XX(t);\XX}(t),\XX\right),\left(q^{1;\YY}(t),\YY\right),\dots,\left(q^{N^\YY(t);\YY}(t),\YY\right),0,\dots \right)\,,
\]
the position and type of all lesions in the system at time $t$. It is worth stressing that, since lesions of the same type are indistinguishable, the chosen ordering is arbitrary and there is no ambiguity in $\HH(\nu(t))$. We denote for short by $\HH^i(\nu(t)) \in \PP$, the $i-$th entry of the vector $\HH(\nu(t))$. With a similar notation, we denote
\[
\begin{split}
\HH(\nu^{\XX}(t)) &:= \left (q^{1;\XX}(t),\dots,q^{N^\XX(t);\XX}(t),0,\dots \right)\,,\\
\HH(\nu^{\YY}(t)) &:= \left (q^{1;\YY}(t),\dots,q^{N^\YY(t);\YY}(t),0,\dots \right)\,.
\end{split}
\]
the vector containing only the positions of lesions of type $\XX$ and $\YY$ respectively.

\subsection{The model}

Each lesion $i$, characterized by its position and lesion type $ P_i = \left (q_i,s_i\right)$, can move and undergo three different pathways. Such rates can be described by the system
\begin{equation}\label{EQN:Rates}
\begin{cases}
& \XX \xrightarrow{\RG} \emptyset\,,\\
& \XX \xrightarrow{\AG} Y\,,\\
& \XX + \XX \xrightarrow{\BG} 
\begin{cases}
\YY & \mbox{with probability} \quad \PG \in [0,1]\,,\\
\emptyset & \mbox{with probability} \quad 1-\PG \in [0,1]\,,\\
\end{cases}\,.\\
\end{cases}
\end{equation}
and can be characterized as follows:
\begin{description}[style=unboxed,leftmargin=0cm]
\item[(i) - Repair] each lesion in the class of sub-lethal lesions $\XX$ can repair at a rate 
\[
\RG :\QQ \times \RR \to \RR_+\,,\quad \RG\left(q,\norm{\Gamma^{\RG}_q,\nu}\right)\,,
\]
that depends on the spatial position of the $i-$th lesion and on the concentration of the system. A sub-lethal lesion that repairs disappear from the system.

The \textit{repair rate} $\RG$ is associated to a Poisson point measure 
\[
\PN^\RG(ds,d\ii,d\theta) \quad \mbox{on} \quad \RR_+ \times \NN_0 \times \RR_+\,.
\]
The index $\ii \in \NN_0$ gives the sampled lesion in $\XX$ to repair. The corresponding intensity measure associated with $N^\RG$ is
\[
\lambda^\RG (ds,d\ii,d\theta) := ds \otimes \left (\sum_{k \geq 0} \delta_k(\ii)\right) \otimes d\theta\,.
\]

We denote with $\PNC^\RG$ the compensated Poisson measure defined as
\[
\PNC^\RG(ds,d\ii,d\theta) := \PN(ds,d\ii,d\theta) - \lambda^\RG (ds,d\ii,d\theta)\,.
\]

\item[(ii) - Death] each lesion in the class of sub-lethal lesions $\XX$ can die at a rate 
\[
\AG :\QQ \times \RR \to \RR_+\,,\quad \AG\left(q,\norm{\Gamma^\AG_q,\nu}\right)\,,
\]
that depends on the spatial position of the $i-$th lesion and on the concentration of the system. A sub-lethal lesion that dies generates a lethal lesion $\YY$ at a new position $q$ according to the probability distribution $m^\AG(q)$, $q \in \QQ$.

The \textit{death rate} $\AG$ is associated to a Poisson point measure 
\[
\PN^\AG(ds,d\ii,d\theta_1,d\theta_2)\quad \mbox{on} \quad \RR_+ \times \NN_0 \times \RR_+ \times \RR_+\,.
\]
The index $\ii \in \NN_0$ gives the sampled lesion in $\XX$ to die and become a lethal lesion in $\YY$ in position $q$ sampled from $m^\AG(q|q_1)$, $q \in \QQ$. The corresponding intensity measure associated with $N^\AG$ is
\[
\lambda^\AG (ds,d\ii,d\theta_1,d\theta_2) := ds \otimes \left (\sum_{k \geq 0} \delta_k(\ii)\right)\otimes d\theta_1 \otimes d\theta_2\,.
\]

We denote with $\PNC^\AG$ the compensated Poisson measure defined as
\[
\PNC^\AG(ds,d\ii,d\theta_1,d\theta_2) := \PN^\AG(ds,d\ii,d\theta_1,d\theta_2) - \lambda^\AG (ds,d\ii,d\theta_1,d\theta_2)\,.
\]


\item[(iii) - Pairwise interaction] two lesions in the class of sub-lethal lesions $\XX$ can interact at a rate 
\[
\BG :\QQ \times \QQ \times \RR \to \RR_+\,,\quad \BG\left(q_1,q_2,\norm{\Gamma^\BG_{q_1,q_2},\nu}\right)\,,
\]
that depends on the spatial position of the $(i_1, i_2)-$th lesions and on the concentration of the system. Two sub-lethal lesions that interact can either (i) die with probability $p$, generating a lethal lesion $\YY$ at a new position $q$ according to the distribution $m^\BG(q)$, $q \in \QQ$, or (ii) repair with probability $1-p$ and disappear from the system. The probability $p$ depends also on the positions of the sampled lesions, namely
\[
\PG :\QQ \times \QQ \to [0,1]\,,\quad \PG\left(q_1,q_2\right)\,.
\]


The \textit{pairwise interaction rate} $\BG$ is associated to two Poisson point measure 
\[
\begin{split}
\PN^{\BG;p}(ds,d\ii,dq,d\theta_1,d\theta_2)\quad &\mbox{on} \quad \RR_+ \times \NN_0 \times \NN_0 \times \QQ \times \RR_+ \times \RR_+\,,\\
\PN^{\BG;1-p}(ds,d\ii,d\theta)\quad &\mbox{on} \quad \RR_+ \times \NN_0 \times \NN_0 \times \RR_+\,.
\end{split}
\]
The index $\ii = (\ii_1,\ii_2) \in \NN_0 \times \NN_0$ gives the sampled lesions in $\XX$ to either become a lethal lesion in $\YY$ in position $q$ sampled from $m(q|q_1,q_2)$, $q \in \QQ$, or repair and be removed from the system. The corresponding intensity measures associated with $\PN^{\BG;p}$ and $\PN^{\BG;1-p}$ are
\[
\begin{split}
\lambda^{\BG;p} (ds,d\ii,dq,d\theta_1,d\theta_2) &:= ds \otimes \left (\sum_{k \geq 0} \delta_k(\ii_1) \wedge \sum_{k \geq 0} \delta_k(\ii_2) \right) \otimes dq \otimes d\theta_1 \otimes d\theta_2\,,\\
\lambda^{\BG;1-p} (ds,d\ii,d\theta) &:= ds \otimes \left (\sum_{k \geq 0} \delta_k(\ii_1) \wedge \sum_{k \geq 0} \delta_k(\ii_2) \right) \otimes d\theta\,.
\end{split}
\]

We denote with $\PNC^\BG$ the compensated Poisson measure defined as
\[
\begin{split}
\PNC^{\BG;p}(ds,d\ii,dq,d\theta_1,d\theta_2) &:= \PN^{\BG;p}(ds,d\ii,dq,d\theta_1,d\theta_2) - \lambda^{\BG;p} (ds,d\ii,dq,d\theta_1,d\theta_2)\,,\\
\PNC^{\BG;1-p}(ds,d\ii,d\theta) &:= \PN^{\BG;1-p}(ds,d\ii,d\theta) - \lambda^{\BG;1-p} (ds,d\ii,d\theta)\,.\\
\end{split}
\]

\item[(iv) - Spatial diffusion] each lesion of type $\XX$ and $\YY$ moves around the domain $\QQ$ with diffusion term 
\[
\begin{split}
&\sigma^\XX: \QQ \to \RR^{d \times d} \,,\quad \sigma^\XX\left(q\right)\,,\\
&\sigma^\YY: \QQ \to \RR^{d \times d} \,,\quad \sigma^\YY\left(q\right)\,,\\
\end{split}
\] 
and drift term 
\[
\begin{split}
&\mu^\XX: \QQ \to \RR^d \,,\quad \mu^\XX\left(q\right)\,,\\
&\mu^\YY: \QQ \to \RR^d\,,\quad \mu^\YY\left(q\right)\,.
\end{split}
\]
In the following, we will also denote
\[
\begin{split}
&\Sigma^\XX: \QQ \to \mathcal{S}_+\left(\RR^d \right) \,,\quad \Sigma^\XX\left(q\right) := \sigma^\XX\left(q\right)\left(\sigma^\XX\left(q\right)\right)^T\,,\\
&\Sigma^\YY: \QQ \to \mathcal{S}_+\left( \RR^d \right) \,,\quad \Sigma^\YY\left(q\right) := \sigma^\YY\left(q\right) \left (\sigma^\YY\left(q\right)\right)^T\,,\\
\end{split}
\]
with $ \mathcal{S}_+\left(\RR^d \right)$ the space of symmetric non-negative $d \times d$ matrices.

To describe lesion motion we introduce a countable collection of standard independent Brownian motion $\left (W^{n;\XX}(t)\right )_{n \in \NN}$ and $\left (W^{n;\YY}(t)\right )_{n \in \NN}$ on $\RR^d$. Brownian motion is assumed to reflect with normal derivative at the boundary of the domain $\QQ$. In particular, denote by $T_{n}$ and $T_{n+1}$ two successive jump times of the process $\nu$, and assume that at time $T_n$ we have $N^\XX(T_n)$, resp. $N^\YY(T_n)$, lesions of type $\XX$, resp. $\YY$. It is worth stressing that in $t \in [T_{n},T_{n+1})$, the number of lesions remains constant so that the process is solely subject to the diffusive component. Thus, for any $t \in [T_{n},T_{n+1})$ each lesion evolves according to the following SDE with reflection at the boundaries
\begin{equation}\label{EQN:BrownRef}
\setlength{\jot}{100pt}
\begin{cases}
\XX^{i_\XX}(t) &= \XX^{i_\XX}(T_n) + \int_{T_n}^{t} \sigma^\XX \left(\XX^{i_\XX}(s)\right) dW^{i_\XX;\XX}(s)+ \int_{T_n}^{t} \mu^\XX \left (\XX^{i_\XX}(s) \right) ds - \kappa^{i_\XX}(t)\,,\\[2pt]
|\kappa^{i_\XX}|(t) &= \int_{T_n}^{t} \Ind{\XX^{i_\XX}(s) \in \partial \QQ} d|\kappa^{i_\XX}|(s)\,,\quad \kappa^{i_\XX}(t) = \int_{T_n}^{t} n\left ( \XX^{i_\XX}(s) \right)d|\kappa^{i_\XX}|(s)\,,\\[2pt]
\XX^{i_\XX}(t) &\in \bar{\QQ}\,,\quad i_\XX+1,\dots,N^\XX(t)\,,\\[4pt]
\YY^{i_\YY}(t) &= \YY^{i_\YY}(T_n) + \int_{T_n}^{t} \sigma^\YY \left (\YY^{i_\YY}(s)\right) dW^{i_\YY;\YY}(s)+ \int_{T_n}^{t} \mu^\YY \left (\YY^{i_\YY}(s)\right) ds - \kappa^{i_\YY}(t)\,,\\[2pt]
|\kappa^{i_\YY}|(t) &= \int_{T_n}^{t} \Ind{\YY^{i_\YY}(s) \in \partial \QQ} d|\kappa^{i_\YY}|(s)\,,\quad \kappa^{i_\YY}(t) = \int_{T_n}^{t} n\left ( \YY^{i_\YY}(s) \right)d|\kappa^{i_\YY}|(s)\,,\\[2pt]
\YY^{i_\Y}(t) &\in \bar{\QQ}\,,\quad i_\YY+1,\dots,N^\YY(t)\,,\\[2pt]
\end{cases}
\end{equation}
where we denoted by $dW(t)$ the integration in the sense of It\^o.
\end{description}

In the following, we will consider a filtered and complete probability space $\left (\Omega,\mathcal{F},\left (\mathcal{F}_t\right )_{t \in \RR_+},\mathbb{P}\right )$ satisfying standard assumptions, namely right--continuity and saturation by $\mathbb{P}$--null sets. In particular, $\left (\mathcal{F}_t\right )_{t \in \RR_+}$ is the filtration generated by the processes defined in $(i)-(ii)-(iii)-(iv)$ as well as a $\MM \times \MM-$valued initial distribution $\nu = (\nu^\XX_0,\nu^\YY_0)$.

\begin{Remark}
Notice that, compared to the original interaction rates as introduced in \cite{cordoni2021generalized}, we included in the present version of the model a further possible pathway, namely
\[
\XX + \XX \xrightarrow{\BG} 
\begin{cases}
\YY & \mbox{with probability} \quad \PG \in [0,1]\,,\\
\emptyset & \mbox{with probability} \quad 1-\PG \in [0,1]\,,\\
\end{cases}\,.\\
\]
This is done since, as noted in early versions of advanced radiobiological models \cite{sachs1992dna}, pairwise interaction of damages, can result also in correct repairs; such a process is called for instance in \cite{sachs1992dna} as \textit{complete exchange}.
\demo
\end{Remark}

Through the paper we will assume the following hypothesis to hold:

\begin{Hypothesis}\label{HYP:1}
\begin{enumerate}
    \item Jump components:
    \begin{description}
            \item[(2.i)] the \textit{repair rate} $\RG$ is uniformly bounded over compact subsets, that is for $N \geq 0$ it exists $\bar{\RG}$ such that
            \[
            \sup_{q \in \QQ} \sup_{v \in [0,N]} \RG(q,v) < \bar{\RG} < \infty\,;
            \]
            \item[(2.ii)] the \textit{death rate} $\AG$ satisfies a linear growth condition, that is, there exists a positive constant $\bar{\AG}$ such that, for all $q \in \QQ$ it holds
            \[
            \begin{split}
                &0 \leq \AG(q,v) \leq \bar{\AG}(1+|v|)\,;\\
            \end{split}
            \]
            \item[(2.iii)] the \textit{pairwise interaction rate} $\BG$ satisfies a linear growth condition, that is, there exists a positive constant $\bar{\BG}$ such that, for all $q_1$ and $q_2 \in \QQ$, it holds
            \[
            \begin{split}
                &0 \leq \BG(q_1,q_2,v) \leq \bar{\BG}(1+|v|)\,;\\
            \end{split}
            \]
            \item[(2.iv)] the \textit{pairwise interaction death} $\PG$ is a probability, that is, for all $q_1$ and $q_2 \in \QQ$, it holds
            \[
            \begin{split}
                 &\PG(q_1,q_2) \in [0,1]\,;\\
            \end{split}
            \]
        \end{description}
    \item Diffusive components:
        \begin{description}
            \item[(2.i)] there exist positive constants $L^\XX$ and $L^\YY$ such that, for any $q_1$, $q_2 \in \QQ$, it holds
            \[
            \begin{split}
                &|\sigma^\XX(q_1)-\sigma^\XX(q_2)|+|\mu^\XX(q_1)-\mu^\XX(q_2)|\leq L^\XX |q_1 - q_2| \,,\\
                &|\sigma^\YY(q_1)-\sigma^\YY(q_2)|+|\mu^\YY(q_1)-\mu^\YY(q_2)|\leq L^\YY |q_1 - q_2| \,.
            \end{split}
            \]
        \end{description}
         \item Kernel components:
        \begin{description}
            \item[(3.i)] for all $h \in \{a,b,r,\sigma^\XX,\mu^\XX,\sigma^\YY,\mu^\YY,\}$ and $Q \in \QQ$ the function $\Gamma^h_Q : \QQ \times \Sp \to \RR_+$ is continuous and uniformly bounded, that is, there exists a constant $\bar{\Gamma}^h$ such that
            \[
            \sup_{q \in \QQ}\, \sup_{(\bar{q},\bar{s}) \in \QQ \times \Sp} \Gamma^h_q (\bar{q},\bar{s}) < \bar{\Gamma}^h<\infty\,;
            \]
            \item[(3.ii)] the sampling measure $m(q|q_1,q_2)$ is a probability measure, that is
            \[
            \int_\QQ m(q|q_1,q_2)dq =1\,.
            \]
        \end{description}
\end{enumerate}
\end{Hypothesis}

In the following we consider the next class of cylindrical test functions: for $F \in \CC^2_b (\RR \times \RR)$, that is, $F$ is bounded with continuous and bounded second order derivative, and for $f^\XX\,,\,f^\YY \in \CC^{2}_0(\QQ)$, that is $f^\XX$ and $f^\YY$ are continuous with bounded second order derivative in the domain variable $\QQ$, satisfying $\nabla_q f(q) \cdot n(q)=0$, we consider $F_{(f^\XX,f^\YY)}:\MM \times \MM \to \RR$ of the form
\begin{equation}\label{EQN:Cyl}
F_{(f^\XX,f^\YY)}(\nu) = F \left (\norm{f^\XX,\nu},\norm{f^\YY,\nu}\right)\,.
\end{equation}
In the following, we will denote by $\frac{\partial}{\partial x}$, resp. $\frac{\partial}{\partial x}$, the derivative with respect to the first argument, resp. the second argument, of the function $F$. Also, $\nabla$, resp. $\Delta$, resp. $Tr$, resp. $Hess$, denotes the gradient with respect to the space variable $q$, resp. the Laplacian operator with respect to the space variable $q$, resp. the trace operator, resp. the Hessian matrix. Cylindrical functions \eqref{EQN:Cyl} are a standard class generating the set of bounded and measurable functions from $\MM \times \MM$ into $\RR$. 

\begin{Remark}
\begin{description}
    \item[(i)] A natural choice for the kernel $\Gamma$ would be to assume that only nearby mass affects the overall rate; in such a case we have, for $q \in \QQ$,
    \[
    \Gamma_q (\bar{q},\bar{s}) := \Ind{|q-\bar{q}|<\epsilon} (\bar{q},\bar{s})\,.
    \]
    Therefore, only lesions that are at most distant $\epsilon$ from the position $q$ where the reaction happens to participate in the reaction.
    
    \item[(ii)] Regarding the \textit{pairwise interaction rate} $\BG$, it is natural to assume that $\BG$ depends only on the separation distance between two lesions, that is, it exists a function
    \[
    \bar{\BG} :\RR \to \RR_+\,,\quad \bar{\BG}\left(\mathfrak{q}\right)\,,
    \]
    such that
    \[
    \BG\left(q_1,q_2\right) = \BG\left(q_2,q_1\right) = \bar{\BG}\left(|q_1 - q_2|\right)\,.
    \]
    Further, it is natural to assume that the closer two lesions are, the more likely to interact they are. Therefore, relevant choices for the rate $\BG$ are for instance a step interaction rate
    \begin{equation}\label{EQN:KerStep}
    \bar{\BG}\left(\mathfrak{q}\right) := \hat{\BG} \Ind{|\mathfrak{q}|<\varepsilon}\,,
    \end{equation}
    or a Gaussian rate
    \begin{equation}\label{EQN:KerGauss}
    \bar{\BG}\left(\mathfrak{q}\right) := \frac{\hat{\BG}}{\sqrt{2 \pi \varepsilon^2}}e^{-\frac{|\mathfrak{q}|^2}{2\varepsilon^2}} \,.
     \end{equation}
    
    Whereas the former rate \eqref{EQN:KerStep} models the case where only lesion closer than $\varepsilon$ can interact, the latter rate \eqref{EQN:KerGauss} considers that the rate of interaction decreases exponentially as the lesions are more distant from each other. At last, as noted in \cite{kellerer1974theory}, enhanced short-range interaction can be modelled using
    \begin{equation}\label{EQN:KerGaussEn}
    \bar{\BG}\left(\mathfrak{q}\right) := \frac{\hat{\BG}_1}{\sqrt{2 \pi \varepsilon^2_1}}e^{-\frac{|\mathfrak{q}|^2}{2\varepsilon^2_1}} + \frac{\hat{\BG}_2}{\sqrt{2 \pi \varepsilon^2_2}}e^{-\frac{|\mathfrak{q}|^2}{2\varepsilon^2_2}} \,,
     \end{equation}
     for suitable constants, so that the rate of interaction declines fast but still has a fat tail at larger distances. Similarly, it is reasonable to assume that also $\PG$ depends on the distance between the interacting lesions.
     
    \item[(iii)] Possible choices for the sampling measure $m(q|q_1,q_2)$ would be to assume that, whenever two sub-lethal lesions at $(q_1,q_2)$ reacts, a lethal lesion is created randomly on the segment connecting $q_1$ and $q_2$. For instance, the following probability distributions can be considered, 
    \[
    m(q|q_1,q_2) = \sum_{j=1}^J p_j \delta_{\alpha_j q_1 + (1-\alpha_j)q_2}(q)\,,\quad \alpha_j \in [0,1]\quad \mbox{and} \quad \sum_{j=1}^J p_j=1\,,
    \]
    or also
    \[
    m(q|q_1,q_2) =\frac{1}{|q_1 - q_2|} \delta_{\alpha q_1 + (1-\alpha)q_2}(q)\,,\quad \alpha \in [0,1]\,.
    \]
\end{description}
\demo
\end{Remark}

With the previous notation, we can thus introduce the following weak representation for the spatial radiation-induced DNA lesion repair model, given $f^\XX$ and $f^\YY \in C^2_0(\RR)$, we have
{\footnotesize
\begin{equation}\label{EQN:SpatGSM2}
\begin{cases}
\norm{f^\XX,\nu^\XX(t)} &= \norm{f^\XX,\nu^\XX(0)} + \int_0^t \sum_{i=1}^{N^\XX (s_-)} \sigma^\XX \left( \HH^i(\nu^\XX)\right) \cdot \nabla f^\XX\left( \HH^i(\nu^\XX)\right)d W^{i:\XX}(s) + \\[2pt]
&+\int_0^t \sum_{i=1}^{N^\XX (s_-)} \mu^\XX \left( \HH^i(\nu^\XX)\right) \cdot \nabla f^\XX\left( \HH^i(\nu^\XX)\right)ds +\\[3pt]
&+\frac{1}{2} \int_0^t \sum_{i=1}^{N^\XX (s_-)}Tr\left [\Sigma^\XX\left(\HH^i(\nu^\XX)\right)
 \mbox{Hess}\,  f^\XX\left( \HH^i(\nu^\XX)\right) \right]ds+\\[3pt]
&+ \int_0^t \int_{\NN_0} \int_{\RR_+} \left[ \norm{f^\XX,\nu^{\XX}(s_-) - \delta_{\HH^i\left( \nu^\XX(s_-)\right)}} - \norm{f^\XX,\nu^{\XX}(s_-)}\right]  \Ind{i \leq N^\XX (s_-)}\Ind{\theta \leq \RG\left( \HH^i\left( \nu^\XX(s_-)\right)\right)} \PN^\RG(ds,di,d\theta)+\\[3pt]
&+ \int_0^t \int_{\NN_0} \int_{\RR_+^2} \left[ \norm{f^\XX,\nu^{\XX}(s_-) - \delta_{\HH^i\left( \nu^\XX(s_-)\right)}} - \norm{f^\XX,\nu^{\XX}(s_-)}\right]\Ind{i \leq N^\XX (s_-)}\Ind{\theta_1 \leq \AG\left( \HH^i\left( \nu^\XX(s_-)\right)\right)}  \PN^\AG(ds,di,d\theta_1,d\theta_2)+\\[3pt]
&+ \int_0^t \int_{\NN_0^2} \int_{\QQ} \int_{\RR_+^2} \left[ \norm{f^\XX,\nu^{\XX}(s_-) - \delta_{\HH^{i_1}\left( \nu^\XX(s_-)\right)} - \delta_{\HH^{i_2}\left( \nu^\XX(s_-)\right)}} - \norm{f^\XX,\nu^{\XX}(s_-)}\right] \times \\[3pt]
&  \times \Ind{i_1 < i_2 \leq N^\XX (s_-)}\Ind{\theta_1 \leq p\left( \HH^{i_1}\left( \nu^\XX(s_-)\right),\HH^{i_2}\left( \nu^\XX(s_-)\right)\right)\BG\left( \HH^{i_1}\left( \nu^\XX(s_-)\right),\HH^{i_2}\left( \nu^\XX(s_-)\right)\right)} \Ind{\theta_2 \leq m(q|\HH^{i_1}\left( \nu^\XX(s_-)\right),\HH^{i_2}\left( \nu^\XX(s_-)\right))} \times \\[3pt]
& \qquad \qquad \qquad \qquad  \times \PN^{\BG;p}(ds,di_1,di_2,dq,d\theta_1,d\theta_2)\,,\\[3pt]
&+ \int_0^t \int_{\NN_0^2}\int_{\RR_+} \left[ \norm{f^\XX,\nu^{\XX}(s_-) - \delta_{\HH^{i_1}\left( \nu^\XX(s_-)\right)} - \delta_{\HH^{i_2}\left( \nu^\XX(s_-)\right)}} - \norm{f^\XX,\nu^{\XX}(s_-)}\right] \times \\[3pt]
&  \times \Ind{i_1 < i_2 \leq N^\XX (s_-)}\Ind{\theta \leq \left (1-p\left( \HH^{i_1}\left( \nu^\XX(s_-)\right),\HH^{i_2}\left( \nu^\XX(s_-)\right)\right)\right)\BG\left( \HH^{i_1}\left( \nu^\XX(s_-)\right),\HH^{i_2}\left( \nu^\XX(s_-)\right)\right)}\PN^{\BG;1-p}(ds,di_1,di_2,d\theta)\,,\\[5pt]
\norm{f^\YY,\nu^\YY(t)} &= \norm{f^\YY,\nu^\YY(0)} + \int_0^t \sum_{i=1}^{N^\YY (s_-)} \sigma^\YY \left( \HH^i(\nu^\YY)\right) \cdot \nabla f^\YY\left( \HH^i(\nu^\YY)\right)d W^{i:\YY}(s) + \\[3pt]
&+\int_0^t \sum_{i=1}^{N^\YY (s_-)} \mu^\YY \left( \HH^i(\nu^\YY)\right) \cdot \nabla f^\YY\left( \HH^i(\nu^\YY)\right)ds +\\[3pt]
&+\frac{1}{2} \int_0^t \sum_{i=1}^{N^\YY (s_-)}Tr\left [\Sigma^\YY\left(\HH^i(\nu^\YY)\right)
 \mbox{Hess}\,  f^\YY\left( \HH^i(\nu^\YY)\right) \right]ds+\\[3pt]
&+ \int_0^t \int_{\NN_0} \int_{\RR_+^2} \left[ \norm{f^\YY,\nu^{\YY}(s_-) + \delta_{\HH^i\left(\nu^\XX(s_-)\right)}} - \norm{f^\YY,\nu^{\YY}(s_-)}\right] \Ind{i \leq N^\XX (s_-)}\Ind{\theta_1 \leq \AG\left( \HH^i\left( \nu^\XX(s_-)\right)\right)}  \PN^\AG(ds,di,d\theta_1,d\theta_2)+\\[3pt]
&+ \int_0^t \int_{\NN_0^2} \int_{\QQ} \int_{\RR_+^2} \left[ \norm{f^\YY,\nu^{\YY}(s_-) + \delta_{q}} - \norm{f^\YY,\nu^{\YY}(s_-)}\right] \times \\[3pt]
&\times \Ind{i_1 < i_2 \leq N^\XX (s_-)}\Ind{\theta_1 \leq p\left( \HH^{i_1}\left( \nu^\XX(s_-)\right),\HH^{i_2}\left( \nu^\XX(s_-)\right)\right)\BG\left( \HH^{i_1}\left( \nu^\XX(s_-)\right),\HH^{i_2}\left( \nu^\XX(s_-)\right)\right)} \Ind{\theta_2 \leq m(q|\HH^{i_1}\left( \nu^\XX(s_-)\right),\HH^{i_2}\left( \nu^\XX(s_-)\right))} \times \\[3pt]
& \qquad \qquad \qquad \qquad  \times \PN^{\BG;p}(ds,di_1,di_2,dq,d\theta_1,d\theta_2)\,.
\end{cases}
\end{equation}
}

\begin{Definition}\label{DEF:DefNu}
We say that $\nu(t) = (\nu^\XX(t),\nu^\YY(t))$ as defined in equations \ref{DEF:Nu1}--\ref{DEF:Nu2} is a \textit{spatial radiation-induced DNA damage repair model} if $\nu = \left(\nu(t)\right)_{t \in \RR_+}$ is $\left (\mathcal{F}_t\right )_{t \in \RR_+}-$adapted and for any $f^\XX$ and $f^\YY\in C^2_0(\QQ)$ equation \eqref{EQN:SpatGSM2} holds $\mathbb{P}-$a.s.
\end{Definition}

The above process is characterized by the following infinitesimal generator
\begin{equation}\label{EQN:InfGenAll}
\LL F_{(f^\XX,f^\YY)}(\nu) = \LL_d F_{(f^\XX,f^\YY)}(\nu) + \sum_{h \in \{\RG,\AG,\BG\}} \LL_{h} F_{(f^\XX,f^\YY)}(\nu)\,,    
\end{equation}
where $\LL F_{(f^\XX,f^\YY)}(\nu)$ is the infinitesimal generator of the reaction terms, whereas $\LL_d F_{(f^\XX,f^\YY)}(\nu)$ is the infinitesimal generator of the diffusive part of equation \eqref{EQN:SpatGSM2}.

In particular, we have
\begin{equation}\label{EQN:InfGD}
\begin{split}
\LL_d^\XX f^\XX(q) &= \mu^\XX \left(q\right) \cdot \nabla f^\XX\left(q\right)ds +\frac{1}{2} Tr\left [\Sigma^\XX\left(q\right) \mbox{Hess} \,f^\XX\left(q\right)\right ]\,,\\
\LL_d^\XX f^\YY(q) &= \mu^\YY \left(q\right) \cdot \nabla f^\YY\left(q\right)ds +\frac{1}{2} Tr\left [\Sigma^\YY\left(q\right) \mbox{Hess}\, f^\YY\left(q\right)\right ]\,.
\end{split}
\end{equation}

It further holds that
\begin{equation}\label{EQN:InfDiff}
\begin{split}
\LL_d F_{(f^\XX,f^\YY)}(\nu) &= \norm{\LL_d^\XX f^\XX,\nu^\XX} \frac{\partial}{\partial x}  F \left (\norm{f^\XX,\nu},\norm{f^\YY,\nu}\right) + \norm{\LL_d^\YY f^\YY,\nu^\YY} \frac{\partial}{\partial y}  F \left (\norm{f^\XX,\nu},\norm{f^\YY,\nu}\right) + \\
&+ \norm{\nabla^T f^\XX\, \sigma^\XX \,\nabla f^\XX,\nu^\XX} \frac{\partial^2}{\partial x^2}  F \left (\norm{f^\XX,\nu},\norm{f^\YY,\nu}\right) + \norm{\nabla^T f^\YY \, \sigma^\YY \,\nabla f^\YY,\nu^\YY} \frac{\partial^2}{\partial y^2}  F \left (\norm{f^\XX,\nu},\norm{f^\YY,\nu}\right)\,.
\end{split}
\end{equation}

Regarding the infinitesimal generator of the reaction terms it holds
\begin{equation}\label{EQN:InfReact}
\begin{split}
\LL_{\RG} F_{(f^\XX,f^\YY)}(\nu) &= \int_{\QQ} \RG(q,\nu)\left[F_{(f^\XX,f^\YY)}(\nu^\XX - \delta_{q},\nu^\YY) - F_{(f^\XX,f^\YY)}(\nu) \right]\nu^\XX(dq)\,,\\
\LL_{\AG} F_{(f^\XX,f^\YY)}(\nu) &= \int_{\QQ} \int_{\QQ} \AG(q,\nu)\left[F_{(f^\XX,f^\YY)}((\nu^\XX - \delta_{q},\nu^\YY + \delta_{\bar{q}})) - F_{(f^\XX,f^\YY)}(\nu) \right]m^{\AG}(\bar{q}|q)d\bar{q}\nu^\XX(dq)\,,\\
\LL_{\BG} F_{(f^\XX,f^\YY)}(\nu) &= \int_{\tilde{\QQ}^2} \int_{\QQ} \PG(q_1,q_2)\BG(q_1,q_2,\nu)\left[F_{(f^\XX,f^\YY)}((\nu^\XX - \delta_{q_1} - \delta_{q_2},\nu^\YY + \delta_{\bar{q}})) - F_{(f^\XX,f^\YY)}(\nu) \right] \times \\
&\qquad \qquad \qquad \qquad \times m^{\BG}(\bar{q}|q_1,q_2)d\bar{q}\nu^\XX(dq_1)\nu^\XX(dq_2) + \\
&+\int_{\tilde{\QQ}^2} \left (1-\PG(q_1,q_2)\right) \BG(q_1,q_2,\nu)\left[F_{(f^\XX,f^\YY)}((\nu^\XX - \delta_{q_1} - \delta_{q_2},\nu^\YY)) - F_{(f^\XX,f^\YY)}(\nu) \right] \nu^\XX(dq_1)\nu^\XX(dq_2)\,,\\
\end{split}
\end{equation}
where we have denoted by
\[
\tilde{\QQ}^2 := \QQ^2 \setminus \left \{(q_1,q_2) \,:\, q_1=q_2 \right \}\,.
\]

\subsection{Stepwise construction of the process}

In the present Section, we provide a step-wise construction of the process. Such construction, besides being relevant from a theoretical point of view, is particularly important in implementing a simulation algorithm for the process defined in the previous section. Notice that, using assumptions \ref{HYP:1}, we have that the rate at which $a$, $b$ and $r$ happen is bounded in the uniform norm. Thus, between the occurrence times of the jump components, each lesion moves according to the diffusive generator $\DD_X$ and $\DD_y$.

\begin{enumerate}
    \item starts with a random measure
    \[
    \nu_0 := \left (\nu^\XX_0 , \nu^\YY_0 \right):= \left (\sum_{i=1}^{N^\XX_0} \delta_{\XX^i(0)},\sum_{i=1}^{N^\YY_0} \delta_{\YY^i(0)}\right)\,,
    \]
    and set $t=\tau^0 = 0$. The initial distribution $M_0$ will be treated explicitly and in detail in Section \ref{SEC:Init};\\
    \item every jump reaction $h \in \{a,b,r\}$ has an exponential clock; set thus the random time of the first reaction happening
    \[
    \begin{split}
    \tau_h^1 := \inf \left \{t > 0 \, : \, \int_0^t \bar{\mathrm{h}}(\nu(s))ds \geq \mathcal{E}^1_h \right \}\,,\quad \bar{h}\in \left \{\bar{\RG},\bar{\AG},\bar{\BG}\right\}
    \end{split}
    \]
    with $\mathcal{E}^1_h$ is an exponential random variable with parameter $1$. Also, we have defined
    \[
    \begin{split}
    \bar{\mathrm{r}}(\nu(s)) &:= \sum_{i=1}^{N^\XX(s)} \RG\left(\HH^i\left (\nu^\XX(s)\right),\norm{\Gamma^{\RG}_{\HH^i\left (\nu^\XX(s)\right)},\nu}\right)\,,\\
    \bar{\mathrm{a}}(\nu(s)) &:= \sum_{i=1}^{N^\XX(s)} \AG\left(\HH^i\left (\nu^\XX(s)\right),\norm{\Gamma^{\AG}_{\HH^i\left (\nu^\XX(s)\right)},\nu}\right)\,,\\
    \bar{\mathrm{b}}(\nu(s)) &:= \sum_{\substack{i_1,i_2=1\\ i_1< i_2}}^{N^\XX(s)} \BG\left(\HH^{i_1}\left (\nu^\XX(s)\right),\HH^{i_2}\left (\nu^\XX(s)\right),\norm{\Gamma^{\BG}_{\HH^{i_1}\left (\nu^\XX(s)\right),\HH^{i_2}\left (\nu^\XX(s)\right)},\nu}\right)\,.\\
    \end{split}
    \]
    \item set $\tau^1 := \min_{h \in \{a,b,r\}}\tau_h^1$ and consider $h^1 \in \{a,b,r\}$ the reaction that triggers the random time $\tau^1_{h^1}$;\\
    \item let any lesion move according to the diffusion and drift coefficients as described in equation \eqref{EQN:BrownRef}, until time $T \wedge \tau_1$ is reached. If $T$ is reached exit, otherwise go to the next step;\\
    \item sample the lesions and positions of the lesions that triggered the reaction and, in case either $\AG$ or $\BG$ fired, sample the position $q$ of the new lesion $\YY$ created;
    \item at time $\tau^1$, if:
    \begin{description}
        \item[6.i] $\RG$ has been triggered, set $N^\XX(\tau^1) = N^\XX(\tau^1_-) -1$ and $N^\YY(\tau^1) = N^\YY(\tau^1_-)$ and remove the $i-$th component of $\XX$, that is we have
        \[
        (\XX^1(\tau^1),\dots,\XX^{i-1}(\tau^1),\XX^{i+1}(\tau^1),\dots,\XX^{N^X}(\tau^1))\,;
        \]
        \item[6.ii] $\AG$ has been triggered, set $N^\XX(\tau^1) = N^\XX(\tau^1_-) - 1$ and $N^\YY(\tau^1) = N^\YY(\tau^1_-)+1$, remove the $i-$th component of $\XX$ and create a new lesion $\YY$ in the same position. We thus have
        \[
        \begin{split}
        &(\XX^1(\tau^1),\dots,\XX^{i-1}(\tau^1),\XX^{i+1}(\tau^1),\dots,\XX^{N^\XX}(\tau^1))\,,\\
        &(\YY^1(\tau^1),\dots,\YY^{N^\YY}(\tau^1_-),\YY^{N^\YY+1}(\tau^1))\,.
        \end{split}
        \]
        \item[6.iii] $\BG$ has been triggered, and simulate a number $\tilde{p}$ from a random variable $P \sim U(0,1)$, if:
        \begin{description}
        \item[6.iii.a] $\tilde{p} \leq \PG\left(\XX^{i_1}(\tau^1),\XX^{i_2}(\tau^1)\right)$, then set $N^\XX(\tau^1) = N^\XX(\tau^1_-) - 2$ and $N^\YY(\tau^1) = N^\YY(\tau^1_-)+1$, remove the $i_1-$th and $i_2-$th component of $\XX$ and create a new lesion $\YY$ in position $q \in \QQ$. We thus have
        \[
        \begin{split}
        &(\XX^1(\tau^1),\dots,\XX^{i_1-1}(\tau^1),\XX^{i_1+1}(\tau^1),\dots,\XX^{i_2-1}(\tau^1),\XX^{i_2+1}(\tau^1),\dots,\XX^{N^\XX}(\tau^1))\,,\\
        &(\YY^1(\tau^1),\dots,\YY^{N^\YY}(\tau^1_-),\YY^{N^\YY+1}(\tau^1))\,.
        \end{split}
        \]
        \item[6.iii.b] $\tilde{p} > \PG\left(\XX^{i_1}(\tau^1),\XX^{i_2}(\tau^1)\right)$, then set $N^\XX(\tau^1) = N^\XX(\tau^1_-) - 2$ and $N^\YY(\tau^1) = N^\YY(\tau^1_-)$, remove the $i_1-$th and $i_2-$th component of $\XX$ remove lesions $i_1$ and $i_2$ from the system. We thus have
        \[
        \begin{split}
        &(\XX^1(\tau^1),\dots,\XX^{i_1-1}(\tau^1),\XX^{i_1+1}(\tau^1),\dots,\XX^{i_2-1}(\tau^1),\XX^{i_2+1}(\tau^1),\dots,\XX^{N^\XX}(\tau^1))\,,\\
        &(\YY^1(\tau^1),\dots,\YY^{N^\YY}(\tau^1_-))\,.
        \end{split}
        \]
        \end{description}
    \end{description}
    \item update $t = t + \tau^1$;\\
    \item if $t<T$, go to step 1 and repeat until $T$ is reached.
\end{enumerate}

\subsection{Well-posedness and martingale properties}

We can in the present Section the existence and uniqueness of solutions to the above-introduced model.

\begin{Theorem}\label{THM:E!}
Let $\nu^\XX_0$ and $\nu^\YY_0$ two independent random measures with finite $p-$th moment, $p \geq 1$, that is it holds
\begin{equation}\label{EQN:InitNu}
\EE \norm{\mathbf{1},\nu^\XX_0}^p < \infty\,,\quad \EE \norm{\mathbf{1},\nu^\YY_0}^p < \infty\,.
\end{equation}
Then, under Hypothesis \ref{HYP:1}, for any $T>0$, there exists a pathwise unique strong solution to the system \eqref{EQN:SpatGSM2} in $\DD \left ([0,T],\MM \times \MM \right)$. Also, it holds
\begin{equation}\label{EQN:SupEst}
\mathbb{E} \sup_{t \leq T} \norm{\mathbf{1},\nu (t)}^p  < \infty\,.
\end{equation}

In particular, the process $\nu$ in Definition \ref{DEF:DefNu} is well-defined on $\RR_+$.
\end{Theorem}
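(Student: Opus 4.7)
The plan is to build $\nu$ inductively along its jump times, exploiting the crucial observation that the total number of lesions
\[
N(t) := \norm{\mathbf{1},\nu^\XX(t)} + \norm{\mathbf{1},\nu^\YY(t)}
\]
is pathwise non-increasing. By inspection of the reactions in \eqref{EQN:Rates}, the death reaction $\XX \to \YY$ leaves $N$ unchanged, repair decreases it by one, and pairwise interaction decreases it by one or two. Hence $N(t) \leq N(0)$ almost surely for every $t \geq 0$, a monotonicity that will supply both non-explosion of jump times and the moment estimate \eqref{EQN:SupEst} essentially for free.

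First I would localize on the event $\Omega_n := \{N(0) \leq n\}$, which exhausts $\Omega$ as $n \to \infty$ because $N(0) < \infty$ almost surely by the integrability assumption \eqref{EQN:InitNu}. Under Hypothesis \ref{HYP:1} and the monotonicity $N(t) \leq n$, the total instantaneous reaction rate on $\Omega_n$ is dominated by a deterministic polynomial in $n$: the $\RG$-contribution by $\bar{\RG}\, n$, the $\AG$-contribution by $\bar{\AG}(1 + \bar{\Gamma}^\AG n)\, n$, and the $\BG$-contribution by $\bar{\BG}(1 + \bar{\Gamma}^\BG n)\, n(n-1)/2$. Consequently the number of jumps on $[0,T]$ is stochastically bounded by a Poisson variable with finite parameter $C(n)T$, so on $\Omega_n$ only finitely many jumps occur almost surely and the jump times form a strictly increasing sequence $0 = \tau_0 < \tau_1 < \tau_2 < \dots$

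The induction itself follows the stepwise algorithm of the previous subsection. Given $\nu(\tau_k)$, on the random interval $[\tau_k, \tau_{k+1})$ the configuration evolves according to the autonomous system of reflected SDEs \eqref{EQN:BrownRef}, one per surviving lesion; well-posedness in the smooth domain $\bar{\QQ}$ follows from classical Skorokhod-reflection theory (Tanaka, Saisho) under the Lipschitz hypothesis \ref{HYP:1}(2.i). The next jump time $\tau_{k+1}$ and the triggered reaction are read off from the marked Poisson point measures $\PN^\RG,\PN^\AG,\PN^{\BG;p},\PN^{\BG;1-p}$ through the indicator functions appearing in \eqref{EQN:SpatGSM2}, and $\nu(\tau_{k+1})$ is obtained by applying the prescribed update rule. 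Pathwise uniqueness is then immediate, since the construction is deterministic once the driving Brownian motions and Poisson measures are fixed. Letting $n \to \infty$ glues these localized solutions into a unique strong solution in $\DD([0,T],\MM \times \MM)$.

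The moment bound \eqref{EQN:SupEst} is an immediate corollary of the pathwise monotonicity:
\[
\EE \sup_{t \leq T} \norm{\mathbf{1}, \nu(t)}^p \;\leq\; \EE\, N(0)^p \;\leq\; 2^{p-1}\bigl(\EE \norm{\mathbf{1}, \nu^\XX_0}^p + \EE \norm{\mathbf{1}, \nu^\YY_0}^p\bigr) \;<\; \infty,
\]
so no Gr\"onwall argument is needed in the present setup. The main technical obstacle is therefore not the moment estimate nor uniqueness, but rather the careful bookkeeping needed to verify that the pathwise construction produces a genuine $\mathcal{F}_t$-adapted c\`adl\`ag process whose dynamics coincide with the weak formulation \eqref{EQN:SpatGSM2}: in particular one must check that the indicators in the Poisson stochastic integrals do realize the prescribed thinning, that the reflected diffusions on inter-jump intervals combine consistently with the jump mechanism, and that the limiting procedure $n \to \infty$ preserves adaptedness and the intensity structure of the driving Poisson measures.
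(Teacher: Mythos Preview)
Your proposal is correct and takes a genuinely simpler route than the paper. The key difference is that you exploit the pathwise monotonicity of the \emph{total} count $N(t)=N^\XX(t)+N^\YY(t)$, whereas the paper treats the two species separately: it observes that $N^\XX$ is non-increasing (giving $\sup_s\norm{\mathbf{1},\nu^\XX(s)}^p\leq\norm{\mathbf{1},\nu^\XX_0}^p$ for free), but for $N^\YY$ it sets up the increment estimate $(\norm{\mathbf{1},\nu^\YY}+1)^p-\norm{\mathbf{1},\nu^\YY}^p\leq C(1+\norm{\mathbf{1},\nu^\YY}^{p-1})$, bounds the $\AG$- and $\BG$-jump rates via the a priori control on $N^\XX$, and closes with Gr\"onwall and Fatou along the stopping times $\tau_n^\YY$. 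Your observation that every reaction either preserves or strictly decreases $N$ collapses all of this to a one-line moment bound and makes the stopping-time/Gr\"onwall machinery unnecessary. What the paper's approach buys is portability: its Gr\"onwall argument for $\YY$ transfers essentially verbatim to the protracted-irradiation setting of Theorem~\ref{THM:E!2}, where new lesions are injected at rate $\DG$ and total monotonicity fails, so the paper is effectively proving a template that it reuses later.
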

\begin{proof}
Since the jump times are isolated, the construction of $\nu^\XX(t)$ and $\nu^\YY(t)$ can be done pathwise inductively along the successive jump times. In particular, denote by $T_{m}$ and $T_{m+1}$ two successive jump times of the process $\nu$, and assume that at time $T_m$ we have $N^\XX(T_m)$, resp. $N^\YY(T_m)$, lesions of type $\XX$, resp. $\YY$. As noted above, for $t \in [T_{m},T_{m+1})$ the number of lesions remains constant so that the process is solely subject to the diffusive component as described in equation \eqref{EQN:BrownRef}. Using Hypothesis \ref{HYP:1}, conditionally on $\mathcal{F}_{T_m}$, equation \eqref{EQN:BrownRef} can be seen as a purely diffusive SDE with globally Lipschitz coefficients on $\RR^{d \times N^\XX(T_m)} \times \RR^{d \times N^\YY(T_m)}$, so that the process 
\[
\left(\XX^{i_\XX}(t),\YY^{i_\YY}(t) \right)_{i_\XX=1,\dots,N^\XX(T_n);\, i_\YY=1,\dots,N^\YY(T_n)}\,,
\]
admits a unique strong solution for $t \in [T_{m},T_{m+1})$.

Define then, for $n \geq 0$,
\[
\tau_n^\XX := \inf \{t \geq 0 \,:\, \norm{\mathbf{1},\nu^\XX(t)} \geq n\}\,,
\]
and set for short $\bar{\tau}_n^\XX := t \wedge \tau_n^\XX$. 

We can construct a solution algorithmically in $[0,T)$. For $t \geq 0$, noticing that the number of lesions in $\XX$ can only decrease, we have,
\begin{equation}\label{EQN:Est1}
\begin{split}
\sup_{s \in [0,\bar{\tau}_n^\XX]} \norm{\mathbf{1},\nu^\XX(s)}^p & \leq \norm{\mathbf{1},\nu^\XX_0}^p \,.
\end{split}
\end{equation}

Regarding $\YY$, we have that,
\begin{equation}\label{EQN:Est2}
\begin{split}
&\sup_{s \in [0,\bar{\tau}_n^\YY]} \norm{\mathbf{1},\nu^\YY(s)}^p \leq \norm{\mathbf{1},\nu^\YY_0}^p +\\
&+\int_0^{\bar{\tau}_n^\YY} \int_{\NN_0} \int_{\RR_+^2} \left[ \left (\norm{\mathbf{1},\nu^{\YY}(s_-)} + 1\right)^p - \norm{\mathbf{1},\nu^{\YY}(s_-)}^p\right] \Ind{i \leq N^\XX (s_-)}\Ind{\theta_1 \leq \AG\left( \HH^i\left( \nu^\XX(s_-)\right)\right)}  \PN^\AG(ds,di,d\theta_1,d\theta_2)+\\[3pt]
&+ \int_0^{\bar{\tau}_n^\YY} \int_{\NN_0^2} \int_{\QQ} \int_{\RR_+^2} \left[ \left(\norm{\mathbf{1},\nu^{\YY}(s_-)} + 1\right)^p - \norm{\mathbf{1},\nu^{\YY}(s_-)}^p\right] \times \\[3pt]
&\times \Ind{i_1 < i_2 \leq N^\XX (s_-)}\Ind{\theta_1 \leq \BG\left( \HH^{i_1}\left( \nu^\XX(s_-)\right),\HH^{i_2}\left( \nu^\XX(s_-)\right)\right)} \Ind{\theta_2 \leq m(q|\HH^{i_1}\left( \nu^\XX(s_-)\right),\HH^{i_2}\left( \nu^\XX(s_-)\right))}  \PN^{\BG;p}(ds,di_1,di_2,dq,d\theta_1,d\theta_2)\,.
\end{split}
\end{equation}

Taking the expectation in equation \eqref{EQN:Est2} and using estimate \eqref{EQN:Est1} together with
\[
(1+y)^p - y^p \leq C(1+y^{p-1})\,,\quad \forall \, y \geq 0\,.
\]
we have that, for some $C>0$ that can take possibly different values,
\begin{equation}\label{EQN:Est3}
\begin{split}
\EE \sup_{s \in [0,\bar{\tau}_n^\YY]} \norm{\mathbf{1},\nu^\YY(s)}^p &\leq \EE \norm{\mathbf{1},\nu^\YY_0}^p +\\
&+C \, \bar{\AG}\, \bar{\Gamma}^\AG \, \EE \norm{\mathbf{1},\nu^{\XX}_0}\EE \int_0^{\bar{\tau}_n^\YY} \left (1+\norm{\mathbf{1},\nu^{\YY}(s_-)}^{p-1} \right) \norm{\mathbf{1},\nu^{\YY}(s_-)}ds +\\[3pt]
&+ C \, \bar{\BG}\, \bar{\Gamma}^\BG \, \EE \norm{\mathbf{1},\nu^{\XX}_0}\EE \int_0^{\bar{\tau}_n^\YY} \left (1+\norm{\mathbf{1},\nu^{\YY}(s_-)}^{p-1} \right) \norm{\mathbf{1},\nu^{\YY}(s_-)}ds \leq \\
&\leq C \left (1+ \EE \int_0^t \norm{\mathbf{1},\nu^{\YY}(s \wedge \tau_n^\YY)}^p \right )\,.
\end{split}
\end{equation}

From Gronwall lemma it thus follows that it exists $C>0$ depending on p and $T$ but independent of $n$, such that
\begin{equation}\label{EQN:Est4}
\EE \sup_{s \in [0,\bar{\tau}_n^\YY]}\norm{\mathbf{1},\nu^\YY(s)}^p \leq C\,.
\end{equation}

Letting thus $n \to \infty$, we have that 
\begin{equation}\label{EQN:TauInf}
\tau_n^\YY \to \infty\quad \mbox{a.s.}\,.
\end{equation}
In fact, if that was not the case, we can find $T_0<\infty$ such that
\[
\mathbb{P}\left ( \sup_n \tau_n^\YY < T_0\right) = \varepsilon(T_0) >0\,.
\]

This would in turn yields
\[
\EE \sup_{s \in [0,T_0 \wedge \tau_n^\XX]} \norm{\mathbf{1},\nu^\YY(s)}^p \geq \varepsilon(T_0) n^p\,,
\]
which contradicts equation \eqref{EQN:Est4}. A similar argument holds for $\XX$. Using thus Fatou's lemma we can let $n \to \infty$ as
\[
\EE \lim \inf_{n \to \infty} \sup_{s \in [0,T \wedge \tau_n^\XX]}  \norm{\mathbf{1},\nu^\YY(s)}^p \leq \lim \inf_{n \to \infty} \EE \sup_{s \in [0,T \wedge \tau_n^\XX]}  \norm{\mathbf{1},\nu^\YY(s)}^p \leq C < \infty\,,
\]
proving thus \eqref{EQN:SupEst}.

At last, since the above claim holds also for $p=1$, we have that
\begin{equation}\label{EQN:Estp1}
\mathbb{E} \sup_{t \leq T} \norm{\mathbf{1},\nu (t)}  < \infty\,,
\end{equation}
so that the process $\nu$ can be constructed step by step between consecutive jumps and the sequence of jump times $\left (T_m\right)_{m \in \NN}$ goes to infinity and the process in well-defined. The proof is thus complete.
\end{proof}

\begin{Remark}
Notice that if, instead of conditions $(2.ii)-(2.iii)$ in Hypothesis \ref{HYP:1} we require the weaker conditions
\[
\begin{split}
\sup_{q \in \QQ} \sup_{v \in [0,N]} \AG(q,v) < \bar{\AG} < \infty\,,\\
\sup_{q_1,q_2 \in \QQ} \sup_{v \in [0,N]} \BG(q_1,q_2,v) < \bar{\BG} < \infty\,,\\
\end{split}
\]
Theorem \ref{THM:E!} would follow analogously with the only difference that existence and uniqueness can be proved only up to a sufficiently small finite horizon time $T_0<\infty$ rather than on the whole real line $\RR_+$. In particular, equation \eqref{EQN:TauInf} does not hold. To see that, consider the truncated \textit{death rate} and \textit{pairwise interaction rate}
\[
\AG_n(q,v):= \AG(q,v) \Ind{v \leq n}\,,\quad \BG_n(q,v):= \BG(q,v) \Ind{v \leq n}\,.
\]

By the boundedness of the rates $\AG_n$ and $\BG_n$, we have that Theorem \ref{THM:E!} is valid and existence and uniqueness hold true up to a stopping time $\tau_n$. We further clearly have that $\tau_n \leq \tau_{n+1}$, so that, if $\tau_n \to \infty$ as $n \to \infty$, we have the existence and uniqueness for any time horizon $T$, whereas if on the contrary we have that $\tau_n \to T_0$, we have an explosion of the solution in finite time.
\demo
\end{Remark}
      
The next result state a martingale property for the spatial \GSM introduced in previous sections. 
      
\begin{Theorem}\label{THM:MartR}
Assume that Hypothesis \ref{HYP:1} holds true and that $\nu^\XX_0$ and $\nu^\YY_0$ are two random measures independent with finite $p-$th moment, $p \geq 2$. Then:
        
\begin{description}  
\item[(i)] $\nu$ is a Markov process with infinitesimal generator $\LL$ defined by \eqref{EQN:InfGenAll};
     
\item[(ii)] assume that for $F \in C^2_b (\RR \times \RR)$ and for $f^\XX\,,\,f^\YY \in \CC^{2}(\QQ)$ such that for all $\nu \in \MM$, it holds 
\begin{equation}\label{EQN:Mart}
|F_{(f^\XX,f^\YY)}(\nu)| + |\LL F_{(f^\XX,f^\YY)}(\nu)| \leq C \left (1+\norm{\mathbf{1},\nu_0}^p \right)\,.
\end{equation}
      
Then, the process
\begin{equation}\label{EQN:Mart2}
F_{(f^\XX,f^\YY)}(\nu(t)) - F_{(f^\XX,f^\YY)}(\nu_0) - \int_0^t \LL F_{(f^\XX,f^\YY)}(\nu(s))ds\,,
\end{equation}
is a c\'adl\'ag martingale starting at 0;

\item[(iii)] the processes $\Mar^\XX$ and $\Mar^\YY$ defined for $f^\XX\,,\,f^\YY \in C^2_0$ by
\begin{equation}\label{EQN:Mart3}
\begin{cases}
\Mar^\XX(t) &= \norm{f^\XX,\nu^\XX(t)} - \norm{f^\XX,\nu^\XX_0} - \int_0^t \norm{\LL^\XX_d f^\XX (x),\nu^\XX(s)}ds +\\[3pt]
&+ \int_0^t\int_{\QQ} \left[ \RG(q,\nu) + \AG(q,\nu) \right] f^\XX(q)\nu^\XX(s)(dq)ds+\\[3pt]
&+\int_0^t\int_{\tilde{\QQ}^2} \BG(q_1,q_2,\nu) (f^\XX(q_1)+f^\XX(q_2))\nu^\XX(s)(dq_1)\nu^\XX(s)(dq_2)ds+\\[5pt]
\Mar^\YY(t) &= \norm{f^\YY,\nu^\YY(t)} - \norm{f^\YY,\nu^\YY} - \int_0^t \norm{\LL^\YY_d f^\YY (x),\nu^\YY(s)}ds +\\[3pt]
&- \int_0^t \int_{\QQ} \AG(q,\nu) \int_{\QQ} m^{\AG}(\bar{q}|q) f^\YY(\bar{q})d\bar{q} \nu^\XX(s)(dq)ds+\\[3pt]
&- \int_0^t\int_{\tilde{\QQ}^2} \int_{\QQ} \PG(q_1,q_2) \BG(q_1,q_2,\nu) f^\YY(\bar{q}) m^{\BG}(\bar{q}|q_1,q_2)d\bar{q}\nu^\XX(s)(dq_1)\nu^\XX(s)(dq_2)ds\,,
\end{cases}
\end{equation}
are c\'adl\'ag $L^2-$martingale starting at 0 with predictable quadratic variation given by
\begin{equation}\label{EQN:MartQuad}
\begin{cases}
\norm{\Mar^\XX}(t) &= \int_0^t \norm{\nabla^T f^\XX \sigma^\XX \nabla f^\XX , \nu^\XX}ds +\\[3pt]
&+ \int_0^t \int_{\QQ} \left[ \RG(q,\nu(s)) + \AG(q,\nu) \right] \left( f^\XX(q) \right)^2 \nu^\XX(s)(dq)ds+\\[3pt]
& +\int_0^t\int_{\tilde{\QQ}^2} \BG(q_1,q_2,\nu) (f^\XX(q_1)+f^\XX(q_2))^2 \nu^\XX(s)(dq_1)\nu^\XX(s)(dq_2)ds+\\[5pt]
\norm{\Mar^\YY}(t)  &= \int_0^t \norm{\nabla^T f^\YY \sigma^\YY \nabla f^\YY , \nu^\YY}ds +\\[3pt]
&- \int_0^t \int_{\QQ} \AG(q,\nu) \int_{\QQ} m^{\AG}(\bar{q}|q) \left( f^\YY(\bar{q})\right)^2 d\bar{q} \nu^\XX(s)(dq)ds+\\[3pt]
&- \int_0^t\int_{\tilde{\QQ}^2} \int_{\QQ} \PG(q_1,q_2) \BG(q_1,q_2,\nu) \left( f^\YY(\bar{q})\right)^2 m^{\BG}(\bar{q}|q_1,q_2)d\bar{q}\nu^\XX(s)(dq_1)\nu^\XX(s)(dq_2)ds\,.
\end{cases}
\end{equation}
\end{description}
\end{Theorem}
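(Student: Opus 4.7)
The plan is to exploit the SDE representation \eqref{EQN:SpatGSM2} together with It\^o's formula for jump-diffusion processes to identify the generator and extract the martingale structure, then specialize to recover part (iii).

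For part (i), the Markov property follows from the pathwise strong solution constructed in Theorem \ref{THM:E!} together with the fact that the driving objects — the Brownian motions and the Poisson point measures $\PN^\RG$, $\PN^\AG$, $\PN^{\BG;p}$, $\PN^{\BG;1-p}$ — have stationary and independent increments, while the coefficients depend only on the current state $\nu(s_-)$. To identify $\LL$, I apply It\^o's formula for c\`adl\`ag semimartingales to the cylindrical functional $F_{(f^\XX,f^\YY)}(\nu(t))$. The continuous part contributes, via the chain rule in $F$ together with the reflected SDE \eqref{EQN:BrownRef}, the diffusive drift $\LL_d F_{(f^\XX,f^\YY)}(\nu)$ of \eqref{EQN:InfDiff}; the reflection terms $d\kappa$ vanish because the admissible test functions satisfy the Neumann condition $\nabla_q f \cdot n = 0$, leaving only $\mu \cdot \nabla f$ and the It\^o correction $\tfrac{1}{2}\mathrm{Tr}[\Sigma\,\mathrm{Hess}\, f]$ (combined with $F'$) and the Brownian cross-variation $|\sigma \nabla f|^2$ (combined with $F''$). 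Each Poisson stochastic integral decomposes into a compensated local martingale plus its compensator; reading off the compensators from the intensities $\lambda^\RG$, $\lambda^\AG$, $\lambda^{\BG;p}$, $\lambda^{\BG;1-p}$, and telescoping $F$ across a jump of size $\nu \to \nu \pm \delta_{\cdot}$, reproduces exactly $\LL_\RG + \LL_\AG + \LL_\BG$ as in \eqref{EQN:InfReact}.

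For (ii), the growth condition \eqref{EQN:Mart} combined with \eqref{EQN:SupEst} at the same exponent $p$ gives $\EE \int_0^T |\LL F_{(f^\XX,f^\YY)}(\nu(s))|ds < \infty$, which upgrades the compensated Poisson integrals and the Brownian stochastic integral from local martingales to genuine martingales; c\`adl\`ag regularity is automatic from the jump construction. For (iii), I specialize $F(x,y)=x$ to obtain $\Mar^\XX$ and $F(x,y)=y$ to obtain $\Mar^\YY$. Since these $F$ are unbounded, the growth bound \eqref{EQN:Mart} is not directly available, and a localization along the stopping times $\tau_n^\XX$, $\tau_n^\YY$ from the proof of Theorem \ref{THM:E!} is required; passing to the limit is justified by \eqref{EQN:SupEst} together with the observation \eqref{EQN:TauInf} that $\tau_n^\XX,\tau_n^\YY \to \infty$ almost surely. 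The predictable quadratic variation \eqref{EQN:MartQuad} is then assembled as the sum of the Brownian contribution $\int_0^t \norm{\nabla^T f^\XX \sigma^\XX \nabla f^\XX,\nu^\XX}ds$ together with, for each compensated Poisson integral of integrand $g_h$ and intensity $\lambda^h$, the compensator $\int_0^t \int g_h^2\,d\lambda^h$; for the pair reaction the two contributions from $\PN^{\BG;p}$ and $\PN^{\BG;1-p}$ collapse to a single $\BG$-weighted double integral because $\PG + (1-\PG) = 1$, exactly matching \eqref{EQN:MartQuad}.

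The main technical obstacle is controlling the pairwise contribution both to $\LL_\BG$ and to the quadratic variation. The double sum over pairs of sub-lethal lesions has size of order $(N^\XX)^2$, and combined with the linear growth of $\BG$ in its third argument — which by Hypothesis \ref{HYP:1}(3.i) is bounded by $\bar\BG(1 + \bar\Gamma^\BG \norm{\mathbf{1},\nu})$ — the compensator can be controlled in expectation only through the higher-moment estimate \eqref{EQN:SupEst}, which is precisely why the assumption $p \geq 2$ is imposed on the initial data. The rest of the proof is careful bookkeeping of the indicator functions $\Ind{i \leq N^\XX(s_-)}$ and $\Ind{i_1 < i_2 \leq N^\XX(s_-)}$ against the Poisson intensities, ensuring that each unordered pair is counted exactly once, and that the jumps sampled through the kernels $m^\AG$ and $m^\BG$ integrate to unity against their compensators.
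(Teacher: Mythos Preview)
Your argument is correct and, for parts (i) and (ii), proceeds essentially as in the paper: Markov property from independent increments of the driving noises, generator identified by compensating the Poisson integrals in the SDE representation, and the martingale property in (ii) obtained by combining the growth bound \eqref{EQN:Mart} with the moment estimate \eqref{EQN:SupEst}.

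For part (iii) you take a somewhat different route from the paper. To show $\Mar^\XX,\Mar^\YY$ are martingales you localize along $\tau_n^\XX,\tau_n^\YY$ and pass to the limit; the paper instead invokes (ii) directly for the polynomial test functions $F_f(\nu)=\norm{f,\nu}^q$, $q\in\{1,\dots,p-1\}$, relying on the fact that these satisfy \eqref{EQN:Mart} even though they are not in $C^2_b$. For the predictable quadratic variation you read it off componentwise from the explicit semimartingale decomposition --- Brownian bracket plus $\int g_h^2\,d\lambda^h$ for each compensated Poisson term --- whereas the paper obtains it by applying (ii) to $F(x)=x^2$ to get one c\`adl\`ag martingale expression \eqref{EQN:QuadV1}, applying It\^o's formula to $\norm{f,\nu}^2$ to get a second expression \eqref{EQN:QuadV2} involving $\langle\Mar\rangle$, and comparing the two. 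Your direct computation is arguably more transparent and avoids the bookkeeping of \eqref{EQN:QuadV1}--\eqref{EQN:QuadV2}; the paper's comparison trick has the mild advantage that it automatically packages the $\PG$ and $1-\PG$ contributions without the separate collapse step you perform.
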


\begin{proof}
\begin{description}
\item[(i)] to show that $\nu$ is a Markov process is standard using the fact Poisson point processes have independent increments. Then, for any function $f^\XX$ and $f^\YY \in C^2_0(\QQ)$, we have the representation given in equation \eqref{EQN:SpatGSM2}. By compensation, we can reformulate equation \eqref{EQN:SpatGSM2} as
\begin{equation}\label{EQN:SpatGSM2Comp2}
\begin{cases}
\norm{f^\XX,\nu^\XX(t)} &= \norm{f^\XX,\nu^\XX(0)} + \int_0^t \norm{\LL^\XX_d f^\XX,\nu^\XX(s)} ds +\\[2pt]
&+ \int_0^t \norm{\left[ \RG(\cdot,\nu) + \AG(\cdot,\nu) \right] f^\XX,\nu^\XX(s)}ds+\\[3pt]
&+\int_0^t \norm{ \norm{ \BG(\cdot,\cdot,\nu(s)) (f^\XX +f^\XX ),\nu^\XX(s)},\nu^\XX(s)}ds +\\[5pt]
\norm{f^\YY,\nu^\YY(t)} &= \norm{f^\YY,\nu^\YY(0)} + \int_0^t \norm{\LL^\YY_d f^\YY,\nu^\YY} ds +\\[2pt]
&- \int_0^t \norm{\AG(\cdot,\nu) \int_{\QQ} m^{\AG}(\bar{q}|\cdot) f^\YY(\bar{q})d\bar{q}, \nu^\XX(s)}ds+\\[3pt]
&- \int_0^t \norm{\norm{\int_{\QQ} \PG(\cdot,\cdot)\BG(\cdot,\cdot) f^\YY(\bar{q}) m^{\BG}(\bar{q}|\cdot,\cdot)d\bar{q},\nu^\XX(s)},\nu^\XX(s)}ds + \tilde{\MM}^\YY(t) \,,
\end{cases}
\end{equation}
where $\tilde{\MM}^\XX$ and $\tilde{\MM}^\YY$ are local-martingales accounting for the noises $W$, $\PN^\RG$, $\PN^\AG$ and $\PN^\BG$. A straightforward computation shows that for $F \in C^2_b (\RR \times \RR)$, dividing equation \eqref{EQN:SpatGSM2Comp2} by $t$, taking the limit as $t \downarrow 0$ and taking the expectation we finally have that $\LL F_{(f^\XX,f^\YY)}(\nu)$ has the expression as given in equation \eqref{EQN:InfGenAll}.

\item[(ii)] using condition \eqref{EQN:Mart} we have can infer that \eqref{EQN:Mart2} is integrable and well-defined. Using point (i) we can finally conclude that \eqref{EQN:Mart2} is a c\'adl\'ag martingale.

\item[(iii)] notice first that point (ii) holds true for any $F_f(\nu) = \norm{f,\nu}^q$, $q \in \{1,\dots,p-1\}$, so that, choosing $q=1$ we immediately have that $\Mar^\XX(t)$ and $\Mar^\YY(t)$ are martingales. Using thus $p=2$ we obtain computing $F_f(\nu) = \norm{f,\nu}^2$ from equations \eqref{EQN:InfDiff}-\eqref{EQN:InfReact},
{\footnotesize
\begin{equation}\label{EQN:QuadV1}
\begin{cases}
&\norm{f^\XX,\nu^\XX(t)}^2 - \norm{f^\XX,\nu^\XX(0)}^2 +\\[3pt] 
& - \int_0^t \int_\QQ \left[ 2\norm{f^\XX,\nu^\XX(s)} \left(\mu^\XX \left(q,\norm{\Gamma^{\mu,\XX}_{q},\nu}\right) \cdot \nabla f^\XX\left(q\right)ds + \frac{1}{2} Tr\left [\Sigma^\XX\left(q,\norm{\Gamma^{\sigma,\XX}_{q},\nu}\right) \mbox{Hess} \,f^\XX\left(q\right)\right ]\right) \right]\nu^\XX(s)(dq)ds+\\[3pt]
&-\int_0^t \int_\QQ 2 \nabla^T f^\XX(q)\, \sigma^\XX \,\nabla f^\XX(q) \nu^\XX(s)(dq)ds+\\[3pt]
&+\int_0^t \int_\QQ \RG(q,\nu)\left[\left (f^\XX(q)\right)^2 - 2 f^\XX(q) \norm{f^\XX,\nu^\XX(s)} \right]\nu^\XX(s)(dq)ds+\\[3pt]
&+ \int_0^t \int_{\QQ} \AG(q,\nu)\left[\left (f^\XX(q)\right)^2 - 2 f^\XX(q) \norm{f^\XX,\nu^\XX(s)} \right]\nu^\XX(s)(dq)ds+\\[3pt]
&+ \int_0^t \int_{\tilde{\QQ}^2}\left[\left (f^\XX(q_1)\right)^2 + \left (f^\XX(q_1)\right)^2 +f^\XX(q_1)f^\XX(q_2) - 2 f^\XX(q_1) \norm{f^\XX,\nu^\XX(s)}- 2 f^\XX(q_2) \norm{f^\XX,\nu^\XX(s)} \right] \times \\[3pt]
&\qquad \qquad\qquad \qquad \times \BG(q_1,q_2,\nu) \nu^\XX(s)(dq_1)\nu^\XX(s)(dq_2) \,ds+\\[5pt]
&\norm{f^\YY,\nu^\YY(t)}^2 - \norm{f^\YY,\nu^\YY(0)}^2 +\\[3pt] 
& - \int_0^t \int_\QQ \left[ 2\norm{f^\YY,\nu^\YY(s)} \left(\mu^\YY \left(q,\norm{\Gamma^{\mu,\YY}_{q},\nu}\right) \cdot \nabla f^\YY\left(q\right)ds + \frac{1}{2} Tr\left [\Sigma^\YY\left(q,\norm{\Gamma^{\sigma,\YY}_{q},\nu}\right) \mbox{Hess} \,f^\YY\left(q\right)\right ]\right) \right]\nu^\YY(s)(dq)ds+\\[2pt]
&-\int_0^t \int_\QQ 2 \nabla^T f^\YY(q)\, \sigma^\YY \,\nabla f^\YY(q) \nu^\YY(s)(dq)ds+\\[3pt]
&-\int_0^t \int_{\QQ} \int_{\QQ} \AG(q,\nu)\left[\left (f^\YY(\bar{q})\right)^2 + 2 f^\YY(\bar{q}) \norm{f^\YY,\nu^\YY(s)} \right]m^{\AG}(\bar{q}|q)d\bar{q}\nu^\XX(s)(dq)ds+\\[2pt]
&-\int_0^t \int_{\tilde{\QQ}^2} \int_{\QQ}  \PG(q_1,q_2)\BG(q_1,q_2,\nu)\left[\left (f^\YY(\bar{q})\right)^2 + 2 f^\YY(\bar{q}) \norm{f^\YY,\nu^\YY(s)} \right]m^{\BG}(\bar{q}|q_1,q_2)d\bar{q}\nu^\XX(s)(dq_1)\nu^\XX(s)(dq_2)ds\,.
\end{cases}
\end{equation}
}

At the same time, computing $\norm{f,\nu}^2$ via It\^{o} formula from equation \eqref{EQN:Mart}, yields that
{\footnotesize
\begin{equation}\label{EQN:QuadV2}
\begin{cases}
&\norm{f^\XX,\nu^\XX(t)}^2 - \norm{f^\XX,\nu^\XX_0}^2 +\\[3pt]
&- \int_0^t \int_\QQ \left[ 2\norm{f^\XX,\nu^\XX(s)} \left(\mu^\XX \left(q,\norm{\Gamma^{\mu,\XX}_{q},\nu}\right) \cdot \nabla f^\XX\left(q\right)ds + \frac{1}{2} Tr\left [\Sigma^\XX\left(q,\norm{\Gamma^{\sigma,\XX}_{q},\nu}\right) \mbox{Hess} \,f^\XX\left(q\right)\right ]\right) \right]\nu^\XX(s)(dq) +\\[3pt]
&+ \int_0^t\int_{\QQ} \left[ \RG(q,\nu) + \AG(q,\nu)\right] f^\XX(q)\norm{f^\XX,\nu^\XX(s)}\nu^\XX(s)(dq)ds+\\[3pt]
&+\int_0^t\int_{\tilde{\QQ}^2}\BG(q_1,q_2,\nu) (f^\XX(q_1)+f^\XX(q_2))\norm{f^\XX,\nu^\XX(s)}\nu^\XX(s)(dq_1)\nu^\XX(s)(dq_2)ds+\\[5pt]
&\norm{f^\YY,\nu^\YY(t)} - \norm{f^\YY,\nu^\YY_0} +\\[3pt]
&- \int_0^t \int_\QQ \left[ 2\norm{f^\YY,\nu^\YY(s)} \left(\mu^\YY \left(q,\norm{\Gamma^{\mu,\YY}_{q},\nu}\right) \cdot \nabla f^\YY\left(q\right)ds + \frac{1}{2} Tr\left [\Sigma^\YY\left(q,\norm{\Gamma^{\sigma,\YY}_{q},\nu}\right) \mbox{Hess} \,f^\YY\left(q\right)\right ]\right) \right]\nu^\YY(s)(dq) ds+\\[3pt]
&- \int_0^t \int_{\QQ} \AG(q,\nu) \int_{\QQ} m^{\AG}(\bar{q}|q) f^\YY(\bar{q})d\bar{q} \norm{f^\YY,\nu^\YY(s)}\nu^\XX(s)(dq)ds+\\[3pt]
&- \int_0^t\int_{\tilde{\QQ}^2} \int_{\QQ} \PG(q_1,q_2) \BG(q_1,q_2,\nu) f^\YY(\bar{q}) m^{\BG}(\bar{q}|q_1,q_2)d\bar{q}\norm{f^\YY,\nu^\YY(s)} \nu^\XX(s)(dq_1)\nu^\XX(s)(dq_2)ds - \Mar^\YY(t)\,,
\end{cases}
\end{equation}
}
is a c\'adl\'ag martingale. Comparing equations \eqref{EQN:QuadV1} and \eqref{EQN:QuadV2} implies equation \eqref{EQN:MartQuad}.
\end{description}
\end{proof}

\subsection{On the initial distribution}\label{SEC:InitSpat}

As clear by the description given in Section \ref{SEC:Init}, the initial distribution considered lacks any spatial distribution on the dose and on the formation of the lesion in the cell nucleus. To compute the initial lesion distribution $\nu_0$, we need to generalize the treatment given in Section \ref{SEC:Init} to include in equation \eqref{EQN:InitDistGSM2} a spatial description. A possible mathematical formulation of the initial lesion computation would be the following. For a better understanding, we will provide a step-wise construction of such distribution: 
\begin{description}
\item[(i)] given a certain dose $D$ and fluence average specific energy $z_F$, a random number of events $\nu$ in a cell nucleus is sampled from a distribution $p_e$. A typical assumption would be, due to the independence of events, to assume $p_e$ a Poisson distribution with average $\frac{D}{z_F}$;\\

\item[(ii)] the $\nu$ events are distributed randomly over the cell nucleus. Under an isotropic and uniform random field, the distribution can be assumed to be uniform over the domain, or in a more general setting, the distribution can be sampled from an \textit{a priori} calculated distribution of tracks using a \textit{condensed history} MC code, \cite{agostinelli2003geant4}. A similar distribution has been for instance calculated in \cite{missiaggia2021novel,missiaggia2022exploratory};\\

\item[(iii)] for any event $i$, $i=1,\dots,\nu$, a certain specific energy $z_i$ is sampled according to the single-event microdosimetric specific energy distribution $f_1(z)$;\\

\item[(iv)] for any event $i$, $i=1,\dots,\nu$, with specific energy deposition $z_i$, the number $\xi^\XX_i$ and $\xi^\YY_i$ of sub-lethal and lethal lesion respectively is sampled from a distribution $p$. A typical assumption would be to assume such distribution the product of two independent Poisson distributions of average $\kappa(z_i)$ and $\lambda(z_i)$ respectively, for some suitable functions $\kappa$ and $\lambda$;\\

\item[(v)] denote by $\xi^\XX := \sum_{i=1}^\nu \xi^\XX_i$ and $\xi^\YY := \sum_{i=1}^\nu \xi^\YY_i$ the number of sub-lethal and lethal lesion respectively. Thus sample the positions $(q^\XX,q^\XX) \in \QQ^{|(\xi^\XX,\xi^\YY)|}$, according to a distribution $\zeta_\xi(\left . q^\XX,q^\YY \right| \xi^\XX,\xi^\YY)$. A reasonable choice for such distribution would be to distribute $z_i$ spatially around the track using the Amorphous Track model, \cite{kase2007biophysical}, which is a parametrization of the radial dose distribution around a particle track. In particular, denoting by $AT_i(q)$ the normalized radial dose distribution representing the probability of depositing a certain dose in a domain, for any $\QQ_1 \subset \QQ$, the relative dose absorbed in $\QQ_1$ is thus given by 
\[
z_i\int_{\QQ_1}AT_i(q)dq\,.
\]
Then, the probability density distribution describing the probability of creating a lesion in $\QQ_1$ is given by
\[
\sum_{i=1}^\nu z_i \int_{\QQ_1}AT_i(q)dq\,.
\]
\end{description}

\begin{Remark}
As mentioned in the introduction, a further choice would be to use track structure code to simulate the spatial distribution of lesions within a cell nucleus. Several papers have been published in the literature showing how this can be achieved, \cite{chatzipapas2022simulation,kyriakou2022review,zhu2020cellular,thibaut2023minas}. Nonetheless, none of these papers then asses the biological effect of given radiation using a true spatial biological model, so the accuracy in the description of the geometry of the biological target is lost. 
\end{Remark}

\section{The protracted irradiation}\label{SEC:ProtI}

In the present Section, we assume a further rate besides the interaction rates described in Section \ref{SEC:SpatGSM2}. Such a rate accounts for the formation of a new random number of both lethal and sub-lethal lesions due to protracted irradiation. We thus have the following system of possible pathways
\begin{equation}\label{EQN:Rates2}
\begin{cases}
& \XX \xrightarrow{\RG} \emptyset\,,\\
& \XX \xrightarrow{\AG} Y\,,\\
& \XX + \XX \xrightarrow{\BG} 
\begin{cases}
\YY & \mbox{with probability} \quad p \in [0,1]\,,\\
\emptyset & \mbox{with probability} \quad 1-p \in [0,1]\,,\\
\end{cases}\,,\\
& \emptyset \xrightarrow{\DG} 
\begin{cases}
\xi^\XX \,\XX\,,\\
\xi^\YY \,\YY\,,
\end{cases}\,.
\end{cases}
\end{equation}
where $\xi^\XX$ and $\xi^\YY$ are two suitable (possibly correlated) $\mathbb{N}-$valued random variables. The last pathway, namely the dose-rate $\DG$ is reasonable to be assumed strictly positive up to a certain time horizon $T_{irr}$, representing the irradiation period.

We thus have the following:
\begin{description}
\item[(v) - protracted irradiation] at a certain \textit{dose rate} $\DG$ a random number $\xi^\XX$ and $\xi^\YY$ sub-lethal and lethal lesions, respectively, are created in $\QQ$. This process can be described by a spatial compound random measure 
\[
\zeta = (\zeta^\XX,\zeta^\YY) = \left(\sum_{i=0}^{\xi^\XX} \delta_{\QQ_i},\sum_{i=0}^{\xi^\YY} \delta_{\QQ_i} \right) \in \MM \times \MM\,.
\]
We assume that the random measure $\zeta$ admits a probability measure of the form
\[
\zeta_\xi(\left . q^\XX,q^\YY \right| \xi^\XX,\xi^\YY) p(\xi^\XX,\xi^\YY)\,,
\]
with $p(\xi^\XX,\xi^\YY)$ a discrete probability distribution on $\NN^2$ representing the probability of inducing $(\xi^\XX,\xi^\YY)$ sub-lethal and lethal lesions and $\zeta_\xi$ a spatial distribution representing the probability of creating $(\xi^\XX,\xi^\YY)$ sub-lethal and lethal lesions at positions $( q^\XX,q^\YY) \in \QQ^{|(\xi^\XX,\xi^\YY)|}$. We will further denote for short the marginal distributions by
\[
\begin{split}
\zeta_{\xi^\XX}(\left . q^\XX \right| \xi^\XX) \,,\quad p_{\XX}(\xi^\XX)\,,\\
\zeta_{\xi^\YY}(\left . q^\YY \right| \xi^\YY) \,,\quad p_{\YY}(\xi^\YY)\,.
\end{split}
\]

The \textit{protracted irradiation rate} $\DG$ is associated to a Poisson point measure 
\[
\PN^{\DG}(ds,d\xi^\XX,d\xi^\YY,dq,d\theta_1,d\theta_2) \quad \mbox{on} \quad \RR_+ \times \NN^2 \times \QQ^{|(\xi^\XX,\xi^\YY)|} \times \RR \times \RR\,.
\]

The corresponding intensity measure associated to $\PN^{\DG}$ is
\[
 \lambda^{\DG} (ds,d\xi^\XX,d\xi^\YY,dq,d\theta_1,d\theta_2) := ds \otimes dp(\xi^\XX,\xi^\YY) \otimes dq \otimes d\theta_1 \otimes d\theta_2\,.
\]

We denote with $\PNC^\BG$ the compensated Poisson measure defined as
\[
\PNC^{\DG}(ds,d\xi^\XX,d\xi^\YY,dq,d\theta_1,d\theta_2) := \PN^{\DG}(ds,d\xi^\XX,d\xi^\YY,dq,d\theta_1,d\theta_2) - \lambda^{\DG} (ds,d\xi^\XX,d\xi^\YY,dq,d\theta_1,d\theta_2)\,.
\]
\end{description}

\begin{Remark}
The protracted irradiation can be interpreted as an improved description of the initial distribution. In particular, if $\DG$ is sufficiently large and $T_{\mbox{irr}}$ is sufficiently small, only the creation of new damages due to the protracted irradiation happens before any of the other pathways can happen. This is typically the case in the clinical scenario, where the \textit{dose rate} usually dominates the biological interaction rates; such a situation is referred to as \textit{conventional dose rate}. In this case, it is reasonable to assume that the initial distribution of lesions $\nu^\XX_0$ and $\nu^\YY_0$ in the instantaneous irradiation, that is $\DG = 0$, coincides with the distribution of lesions under protracted irradiation at time $T_0$. For this reason, a typical distribution $\zeta$ can be obtained following the description provided in Section \ref{SEC:Init} in the particular case of a single event hitting the domain, that is $\nu =1$. It is further worth stressing that there are some relevant situations where an explicit treatment of the effect of protracted irradiation can play a relevant role: (i) a \textit{split dose} irradiation treatment, where the treatment is split into several treatments with a smaller dose to favorite normal tissue recovery between treatments, (ii) space radioprotection, characterized by extremely low dose rates exposure over a long period and (iii) FLASH radiotherapy. Both (i) and (ii) are situations where is fundamental to model the entire spatial distribution of radiation-induced damages over a relatively long time period so that the inclusion of a specific protracted irradiation rate is necessary. For what instead concern (iii), it will be explicitly treated in Section \ref{SEC:FLASH}.
\demo
\end{Remark}

We will assume the following to hold.

\begin{Hypothesis}\label{HYP:2}
\begin{enumerate}
\setItemnumber{4}
\item protracted dose rate components:
\begin{description}
\item[(4.i)] the \textit{protracted irradiation rate} $\DG$ is positive and finite;\\
\item[(4.ii)] for any $\xi^\XX\,,\xi^\YY \in \NN$, $\zeta_\xi$ is a probability measure, i.e.
\[
\int_{\QQ^{|(\xi^\XX,\xi^\YY)|}} \zeta_\xi(\left . q^\XX,q^\YY \right| \xi^\XX,\xi^\YY)dq^\XX \, dq^\YY =1\,;
\]
\item[(4.iii)] the random measure $p$ admits finite $p-$moments, that is, for $p \geq 1$, it holds
\[
\int_{\NN^2} \left (\xi^\XX \right )^p\,dp(\xi^\XX,\xi^\YY) < \infty \,, \quad \int_{\NN^2} \left (\xi^\YY \right )^p\, dp(\xi^\XX,\xi^\YY)  < \infty \,.
\]
\end{description}
\end{enumerate}
\end{Hypothesis}

Therefore, the resulting process is characterized by the process defined in equation \eqref{EQN:SpatGSM2} with the addition of the random measure $\PN^{\DG}$. In particular, denote for short by $\LL^\XX_B$ and $\LL^\YY_B$ the process introduced in equation \eqref{EQN:SpatGSM2}, then the process under the effect of protracted irradiation is characterized by the following weak representation, given $f^\XX$ and $f^\YY \in C^2_0(\RR)$, we have
\begin{equation}\label{EQN:SpatGSM2PI}
\begin{cases}
\norm{f^\XX,\nu^\XX(t)} &= \LL^\XX_B \nu^\XX(t) +\\
&+ \int_0^t \int_{\NN^2} \int_{\QQ^{|(\xi^\XX,\xi^\YY)|}} \int_{\RR}\int_{\RR} \left[ \norm{f^\XX,\nu^{\XX}(s_-) + \sum_{i=1}^{\xi^\XX} \delta_{q_i}} - \norm{f^\XX,\nu^{\XX}(s_-)}\right] \times\\[3pt]
&\qquad \times \Ind{\theta_1 \leq \DG}\Ind{\theta_2 \leq \zeta_\xi^\XX(q|\xi^\XX)} \PN^{\DG}(ds,d\xi^\XX,d\xi^\YY,dq,d\theta_1,d\theta_2)\,,\\[5pt]
\norm{f^\YY,\nu^\YY(t)} &= \LL^\YY_B \nu^\YY(t) +\\
&+\int_0^t \int_{\NN^2} \int_{\QQ^{|(\xi^\XX,\xi^\YY)|}} \int_{\RR}  \int_{\RR} \left[ \norm{f^\YY,\nu^{\YY}(s_-) + \sum_{i=1}^{\xi^\YY} \delta_{q_i}} - \norm{f^\YY,\nu^{\YY}(s_-)}\right] \times\\[3pt]
&\qquad \times \Ind{\theta_1 \leq \DG} \Ind{\theta_2 \leq \zeta_\xi^\YY(q|\xi^\YY)} \PN^{\DG}(ds,d\xi^\XX,d\xi^\YY,dq,d\theta_1,d\theta_2)\,.
\end{cases}
\end{equation}

We thus augment the probability space with the processes defined in $(v)$. We thus have the following definition.

\begin{Definition}\label{DEF:DefNu2}
We say that $\nu(t) = (\nu^\XX(t),\nu^\YY(t))$ as defined in equations \ref{DEF:Nu1}--\ref{DEF:Nu2} is a \textit{spatial radiation-induced DNA damage repair model under protracted irradiation} if $\nu = \left(\nu(t)\right)_{t \in \RR_+}$ is $\left (\mathcal{F}_t\right )_{t \in \RR_+}-$adapted and for any $f^\XX$ and $f^\YY\in C^2_0(\QQ)$ equation \eqref{EQN:SpatGSM2PI} holds $\mathbb{P}-$a.s.
\end{Definition}

We thus have the following well-posedness result.

\begin{Theorem}\label{THM:E!2}
Let $\nu^\XX_0$ and $\nu^\YY_0$ two random measures with finite $p-$th moment, $p \geq 1$, that is it holds
\begin{equation}\label{EQN:InitNu2}
\EE \norm{\mathbf{1},\nu^\XX_0}^p < \infty\,,\quad \EE \norm{\mathbf{1},\nu^\YY_0}^p < \infty\,.
\end{equation}

Then, under Hypothesis \ref{HYP:1}-\ref{HYP:2}, for any $T>0$ it exists a unique strong solution in $\DD \left ([0,T],\MM \times \MM \right)$ to the system \eqref{EQN:SpatGSM2}. Also, it holds
\begin{equation}\label{EQN:SupEst2}
\mathbb{E} \sup_{t \leq T} \norm{\mathbf{1},\nu (t)}^p  < \infty\,.
\end{equation}
\end{Theorem}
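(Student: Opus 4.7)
The plan is to extend the proof of Theorem \ref{THM:E!} by incorporating the new jump component coming from the Poisson measure $\PN^{\DG}$. First I would verify that the stepwise construction still produces isolated jump times on $[0,T]$: since the \textit{protracted irradiation rate} $\DG$ is finite by Hypothesis \ref{HYP:2}(4.i) and $p(\xi^\XX,\xi^\YY)$ is a probability measure, the firing times of $\PN^{\DG}$ form a Poisson process of rate $\DG$, so combined with the rates controlled in Hypothesis \ref{HYP:1} the total jump rate conditional on $\mathcal{F}_s$ is locally bounded. Between two successive jumps the number of lesions is constant and the process evolves according to the reflected SDE \eqref{EQN:BrownRef}, which admits a pathwise unique strong solution by the Lipschitz assumptions. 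This yields a well-defined process up to the explosion time $\tau_\infty := \lim_n \tau_n$, where $\tau_n := \inf\{t \geq 0 : \norm{\mathbf{1},\nu(t)} \geq n\}$.

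The crucial new step is the $p$-th moment estimate for $\nu^\XX$. Unlike in Theorem \ref{THM:E!}, the quantity $\norm{\mathbf{1},\nu^\XX(t)}$ is no longer monotone decreasing, since each firing of $\PN^{\DG}$ adds $\xi^\XX$ new lesions. Applying the stochastic integration formula for Poisson measures to $\norm{\mathbf{1},\nu^\XX(t \wedge \tau_n)}^p$, taking expectation, and discarding the non-positive contributions from the $\RG$, $\AG$, $\BG$ components (these can only decrease $\nu^\XX$), one is left with the new term
\[
C \, \DG \, \mathbb{E} \int_0^{t \wedge \tau_n} \int_{\NN^2} \left[\bigl(\norm{\mathbf{1},\nu^\XX(s)} + \xi^\XX\bigr)^p - \norm{\mathbf{1},\nu^\XX(s)}^p \right] p(d\xi^\XX, d\xi^\YY) \, ds.
\]
Using the elementary inequality $(a+b)^p - a^p \leq C_p (b^p + a^{p-1}b)$ for $a,b \geq 0$ together with Hypothesis \ref{HYP:2}(4.iii) to bound the $p$-th moments of $\xi^\XX$, this integrand is dominated by $C(1 + \mathbb{E}\norm{\mathbf{1},\nu^\XX(s \wedge \tau_n)}^p)$, and Gronwall's lemma produces a uniform-in-$n$ estimate $\mathbb{E}\sup_{s \leq t \wedge \tau_n} \norm{\mathbf{1},\nu^\XX(s)}^p \leq C$. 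The same procedure applied to $\nu^\YY$, now combining the $\AG$/$\BG$ contributions as in \eqref{EQN:Est3} (controlled by the just-obtained bound on $\nu^\XX$) with the $\PN^{\DG}$ contribution treated as above using the finite $p$-th moments of $\xi^\YY$, gives the analogous estimate for $\nu^\YY$.

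Non-explosion then follows by the contradiction argument already used in Theorem \ref{THM:E!}: if $\tau_n$ failed to diverge, there would exist $T_0 < \infty$ with $\mathbb{P}(\sup_n \tau_n < T_0) > 0$, incompatible with the uniform bound; Fatou's lemma then lifts the estimate from $t \wedge \tau_n$ to $t$ and yields \eqref{EQN:SupEst2}. Strong pathwise uniqueness is obtained by the inductive argument along jump times as in Theorem \ref{THM:E!}, the only modification being that the map sending $(\nu_0, \PN^\RG, \PN^\AG, \PN^{\BG;p}, \PN^{\BG;1-p}, \PN^{\DG})$ to $\nu(T_{m+1})$ admits the additional alternative $\nu(T_{m+1}) = \tilde{\nu}(T_{m+1}) + \bigl(\sum_{i=1}^{\xi^\XX} \delta_{q^\XX_i},\, \sum_{i=1}^{\xi^\YY} \delta_{q^\YY_i}\bigr)$ when $\PN^{\DG}$ fires, which is again uniquely determined by the driving noise and the position sampling kernel $\zeta_\xi$. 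The main obstacle is the Gronwall step for $\nu^\XX$: in Theorem \ref{THM:E!} this step was trivial because $\nu^\XX$ could only decrease, whereas here $\nu^\XX$ genuinely grows and one must simultaneously control both $\nu^\XX$ and $\nu^\YY$, exploiting crucially the boundedness of $\DG$ and the finite $p$-moments of the compound jump sizes supplied by Hypothesis \ref{HYP:2}.
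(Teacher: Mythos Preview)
Your proposal is correct and follows essentially the same route as the paper's proof: both drop the non-positive $\RG,\AG,\BG$ contributions to $\norm{\mathbf{1},\nu^\XX}^p$, bound the $\PN^{\DG}$ increment via an elementary inequality together with Hypothesis~\ref{HYP:2}(4.iii), apply Gronwall, repeat for $\nu^\YY$, and conclude non-explosion and \eqref{EQN:SupEst2} via the contradiction/Fatou argument from Theorem~\ref{THM:E!}. The only cosmetic difference is the elementary inequality used---the paper writes $(x+y)^p - y^p \leq C_p\, y^{p-1} x^p$ for positive integers, whereas you use the more standard $(a+b)^p - a^p \leq C_p(b^p + a^{p-1}b)$---but both feed into the same Gronwall bound, and your version has the mild advantage of not requiring the arguments to be integers.
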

\begin{proof}
The proof proceeds with similar arguments as in the proof of Theorem \ref{THM:E!}, taking also into account the protracted irradiation term.

For $t \geq 0$, we have,
\begin{equation}\label{EQN:Est1PI}
\begin{split}
\sup_{s \in [0,\bar{\tau}_n^\XX]} \norm{\mathbf{1},\nu^\XX(s)}^p & \leq \norm{\mathbf{1},\nu^\XX_0}^p + \\
&+\int_0^{\bar{\tau}_n^\XX}\int_{\NN^2} \int_{\QQ^{|(\xi^\XX,\xi^\YY)|}} \int_\RR \int_\RR \left[ \norm{f^\XX,\nu^{\XX}(s_-) + \sum_{i=1}^{\xi^\XX} \delta_{q_i}} - \norm{f^\XX,\nu^{\XX}(s_-)}\right] \times\\
&\qquad \times \Ind{\theta_1 \leq \DG}\Ind{\theta_2 \leq \zeta_\xi^\XX(q|\xi^\XX)} \PN^{\DG}(ds,d\xi^\XX,d\xi^\YY,dq,d\theta_1,d\theta_2)\,.
\end{split}
\end{equation}

Notice that, for any positive integer $x$ and $y$ it holds
\begin{equation}\label{EQN:EstNewPI}
(x+y)^p - y^p \leq C_{p}y^{p-1}x^{p}\,,
\end{equation}
so that, using equation \eqref{EQN:EstNewPI} into equation \eqref{EQN:Est1PI}, we have that, for some $C>0$ that can take possibly different values,
\begin{equation}\label{EQN:Est3PI}
\begin{split}
\EE \sup_{s \in [0,\bar{\tau}_n^\XX]} \norm{\mathbf{1},\nu^\XX(s)}^p &\leq \EE \norm{\mathbf{1},\nu^\XX_0}^p +\\
&+C \DG \EE \int_0^{\bar{\tau}_n^\XX} \int_{\NN^2}\int_{\QQ^{|(\xi^\XX,\xi^\YY)|}} \left(\xi^\XX\right)^p \norm{\mathbf{1},\nu^{\XX}(s_-)}^{p-1}\zeta_\xi(\left . q^\XX,q^\YY \right| \xi^\XX,\xi^\YY) dq^\XX\,dq^\YY \,dp(\xi^\XX,\xi^\YY)ds \leq \\
&\leq C \left (1+ \EE \int_0^t \norm{\mathbf{1},\nu^{\XX}(s \wedge \tau_n^\YY)}^p \right )\,.
\end{split}
\end{equation}
Gronwall lemma implies that there exists $C>0$ depending on p and $T$ but independent of $n$, such that
\begin{equation}\label{EQN:Est4PI}
\EE \sup_{s \in [0,\bar{\tau}_n^\XX]}\norm{\mathbf{1},\nu^\XX(s)}^p \leq C\,.
\end{equation}
Similar arguments can be used to prove that 
\begin{equation}\label{EQN:Est5PI}
\EE \sup_{s \in [0,\bar{\tau}_n^\YY]}\norm{\mathbf{1},\nu^\YY(s)}^p \leq C\,.
\end{equation}
The proof thus follows in a straightforward manner proceeding as in the proof of Theorem \ref{THM:E!}.
\end{proof}

The above process is characterized by the following infinitesimal generator
\begin{equation}\label{EQN:InfGenAlldProt}
\LL F_{(f^\XX,f^\YY)}(\nu) = \LL_d F_{(f^\XX,f^\YY)}(\nu) + \sum_{h \in \{\RG,\AG,\BG,\DG\}} \LL_{h} F_{(f^\XX,f^\YY)}(\nu)\,,    
\end{equation}
where $\LL_d F_{(f^\XX,f^\YY)}(\nu)$ and $\LL_h F_{(f^\XX,f^\YY)}(\nu)$, $h \in \{\RG,\AG,\BG,\DG\}$, are the infinitesimal generators introduced in equations \eqref{EQN:InfDiff}--\eqref{EQN:InfReact}, whereas $\LL_{\DG}$ is defined as
\begin{equation}\label{EQN:InfDRProt}
\begin{split}
\LL_{\DG} F_{(f^\XX,f^\YY)}(\nu) &= \DG\int_{\NN^2} \int_{\QQ^{|(\xi^\XX,\xi^\YY)|}} \left[F_{(f^\XX,f^\YY)}\left (\nu^\XX+\sum_{i=1}^{\xi^\XX}\delta_{q_i^\XX},\nu^\YY+\sum_{i=1}^{\xi^\YY}\delta_{q_i^\YY}\right ) - F_{(f^\XX,f^\YY)}(\nu) \right] \times \\
&\qquad \qquad \qquad \qquad \times \zeta_\xi(\left . q^\XX,q^\YY \right| \xi^\XX,\xi^\YY)dq^\XX\,dq^\YY dp(\xi^\XX,\xi^\YY) \,.\\
\end{split}
\end{equation}

We thus have the martingale properties and representation corresponding to Theorem \ref{THM:MartR}.

\begin{Theorem}\label{THM:MartR2}
Assume that Hypothesis \ref{HYP:1}-\ref{HYP:2} holds true and that $\nu^\XX_0$ and $\nu^\YY_0$ are two random measures independent with finite $p-$th moment, $p \geq 2$. Then:
        
\begin{description}  
\item[(i)] $\nu$ is a Markov process with infinitesimal generator $\LL$ defined by \eqref{EQN:InfGenAlldProt};
     
\item[(ii)] assume that for $F \in C^2_b (\RR \times \RR)$ and for $f^\XX\,,\,f^\YY \in \CC^{2}(\QQ)$ such that for all $\nu \in \MM$, it holds 
\begin{equation}\label{EQN:Mart1a}
|F_{(f^\XX,f^\YY)}(\nu)| + |\LL F_{(f^\XX,f^\YY)}(\nu)| \leq C \left (1+\norm{\mathbf{1},\nu_0}^p \right)\,.
\end{equation}
      
Then, the process
\begin{equation}\label{EQN:Mart21a}
F_{(f^\XX,f^\YY)}(\nu(t)) - F_{(f^\XX,f^\YY)}(\nu_0) - \int_0^t \LL F_{(f^\XX,f^\YY)}(\nu(s))ds\,,
\end{equation}
is a c\'adl\'ag martingale starting at 0;

\item[(iii)] the processes $\Mar^\XX$ and $\Mar^\YY$ defined for $f^\XX\,,\,f^\YY \in C^2_0$ by
\begin{equation}\label{EQN:Mart31a}
\begin{cases}
\Mar^\XX(t) &= \norm{f^\XX,\nu^\XX(t)} - \norm{f^\XX,\nu^\XX_0} - \int_0^t \norm{\LL^\XX_d f^\XX (x),\nu^\XX(s)}ds +\\[3pt]
&+ \int_0^t\int_{\QQ} \left[ \RG(q,\nu) + \AG(q,\nu) \right] f^\XX(q)\nu^\XX(s)(dq)ds+\\[3pt]
&+\int_0^t\int_{\tilde{\QQ}^2} \BG(q_1,q_2,\nu) (f^\XX(q_1)+f^\XX(q_2))\nu^\XX(s)(dq_1)\nu^\XX(s)(dq_2)ds+\\[3pt]
&-\int_0^t \int_{\NN} \int_{\tilde{\QQ}^{\xi^\XX}} \DG \left (\sum_{i=1}^{\xi^\XX}f^\XX(q_i^\XX) \right) \zeta_{\xi^\XX}(\left . q^\XX \right| \xi^\XX) dq^\XX dp(\xi^\XX,\xi^\YY) ds\,,\\[5pt]
\Mar^\YY(t) &= \norm{f^\YY,\nu^\YY(t)} - \norm{f^\YY,\nu^\YY} - \int_0^t \norm{\LL^\YY_d f^\YY (x),\nu^\YY(s)}ds +\\[3pt]
&- \int_0^t \int_{\QQ} \AG(q,\nu) \int_{\QQ} m^{\AG}(\bar{q}|q) f^\YY(\bar{q})d\bar{q} \nu^\XX(s)(dq)ds+\\[3pt]
&- \int_0^t\int_{\tilde{\QQ}^2} \int_{\QQ} \PG(q_1,q_2) \BG(q_1,q_2,\nu) f^\YY(\bar{q}) m^{\BG}(\bar{q}|q_1,q_2)d\bar{q}\nu^\XX(s)(dq_1)\nu^\XX(s)(dq_2)ds+\\[3pt]
&-\int_0^t \int_{\NN} \int_{\tilde{\QQ}^{\xi^\YY}} \DG \left (\sum_{i=1}^{\xi^\YY} f^\YY(q_i^\YY) \right) \zeta_{\xi^\YY}(\left .q^\YY \right| \xi^\YY) dq^\YY dp(\xi^\XX,\xi^\YY) ds\,,
\end{cases}
\end{equation}
are c\'adl\'ag $L^2-$martingale starting at 0 with predictable quadratic variation given by
\begin{equation}\label{EQN:MartQuad1a}
\begin{cases}
\norm{\Mar^\XX}(t) &= \int_0^t \norm{\nabla^T f^\XX \sigma^\XX \nabla f^\XX , \nu^\XX}ds +\\[3pt]
&+ \int_0^t \int_{\QQ} \left[ \RG(q,\nu) + \AG(q,\nu)\right] \left( f^\XX(q) \right)^2 \nu^\XX(s)(dq)ds+\\[3pt]
& +\int_0^t\int_{\tilde{\QQ}^2} \BG(q_1,q_2,\nu) (f^\XX(q_1)+f^\XX(q_2))^2 \nu^\XX(s)(dq_1)\nu^\XX(s)(dq_2)ds+\\[3pt]
&-\int_0^t \int_{\NN} \int_{\tilde{\QQ}^{\xi^\XX}} \DG \left (\sum_{i=1}^{\xi^\XX}f^\XX(q_i^\XX) \right)^2 \zeta_{\xi^\XX}(\left . q^\XX \right| \xi^\XX)dq^\XX dp(\xi^\XX,\xi^\YY)  ds\,,\\[5pt]
\norm{\Mar^\YY}(t)  &= \int_0^t \norm{\nabla^T f^\YY \sigma^\YY \nabla f^\YY , \nu^\YY}ds +\\[3pt]
&- \int_0^t \int_{\QQ} \AG(q,\nu) \int_{\QQ} m^{\AG}(\bar{q}|q) \left( f^\YY(\bar{q})\right)^2 d\bar{q} \nu^\XX(s)(dq)ds+\\[3pt]
&- \int_0^t\int_{\tilde{\QQ}^2} \int_{\QQ} \PG(q_1,q_2) \BG(q_1,q_2,\nu) \left( f^\YY(\bar{q})\right)^2 m^{\BG}(\bar{q}|q_1,q_2)d\bar{q}\nu^\XX(s)(dq_1)\nu^\XX(s)(dq_2)ds+\\[3pt]
&-\int_0^t \int_{\NN} \int_{\tilde{\QQ}^{\xi^\YY}} \DG \left (\sum_{i=1}^{\xi^\YY} f^\YY(q_i^\YY) \right)^2 \zeta_{\xi^\YY}(\left . q^\YY \right| \xi^\YY)dq^\YY dp(\xi^\XX,\xi^\YY) ds\,,
\end{cases}
\end{equation}
\end{description}
\end{Theorem}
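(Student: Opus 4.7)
The plan is to mirror the three-part structure of the proof of Theorem \ref{THM:MartR}, simply incorporating the contribution of the new protracted-irradiation Poisson measure $\PN^\DG$. The well-posedness and moment bound \eqref{EQN:SupEst2} are already furnished by Theorem \ref{THM:E!2}, so the work consists of identifying compensators correctly.

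For part (i), the Markov property follows from the independent-increments structure of all driving noises — the Brownian motions $W^{i;\XX}$, $W^{i;\YY}$ together with the Poisson random measures $\PN^\RG$, $\PN^\AG$, $\PN^{\BG;p}$, $\PN^{\BG;1-p}$, and $\PN^\DG$ — and from the fact that the strong solution of \eqref{EQN:SpatGSM2PI} is uniquely determined by the initial datum and these noises. The explicit form of the generator is obtained by testing the weak equation \eqref{EQN:SpatGSM2PI} with a cylindrical functional, compensating each jump integral against $\lambda^h$ with $h\in\{\RG,\AG,\BG,\DG\}$, dividing by $t$, and passing to $t\downarrow 0$. Relative to Theorem \ref{THM:MartR} the only new contribution is the operator $\LL_\DG$ in \eqref{EQN:InfDRProt}, which encodes the simultaneous creation of $\xi^\XX$ sub-lethal and $\xi^\YY$ lethal lesions at positions sampled from $\zeta_\xi$.

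For part (ii), assumption \eqref{EQN:Mart1a} combined with the $p$-th moment bound \eqref{EQN:SupEst2} yields integrability of $F_{(f^\XX,f^\YY)}(\nu(t))$ and of $\int_0^t |\LL F_{(f^\XX,f^\YY)}(\nu(s))|ds$, whence Dynkin's formula together with (i) gives that \eqref{EQN:Mart21a} is a càdlàg martingale starting at $0$. For part (iii), I would first apply (ii) to $F_f(\nu) = \norm{f,\nu}$ (taking $p=1$ in the moment condition); this identifies each compensator as the corresponding deterministic integral in \eqref{EQN:Mart31a}, including the new term
\[
-\int_0^t \int_{\NN} \int_{\QQ^{\xi^\XX}} \DG \left(\sum_{i=1}^{\xi^\XX} f^\XX(q_i^\XX)\right) \zeta_{\xi^\XX}(q^\XX|\xi^\XX)\, dq^\XX\, dp(\xi^\XX,\xi^\YY)\, ds\,,
\]
and analogously for $\Mar^\YY$, showing that $\Mar^\XX$ and $\Mar^\YY$ are martingales. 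To obtain the predictable quadratic variation I would then apply (ii) to $F_f(\nu) = \norm{f,\nu}^2$, producing one representation of $\norm{f,\nu(t)}^2$ in terms of $\LL$, and in parallel apply the Itô formula to the semimartingale $\norm{f,\nu(\cdot)}$ identified above to produce a second representation. Their difference is a predictable finite-variation local martingale, hence identically zero, which yields the identities \eqref{EQN:MartQuad1a}.

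The principal subtlety, and the only real departure from the argument of Theorem \ref{THM:MartR}, lies in the treatment of the jumps of $\PN^\DG$: at such a jump, $\norm{f^\XX,\nu^\XX}$ increases by the \emph{aggregated} quantity $\sum_{i=1}^{\xi^\XX} f^\XX(q_i^\XX)$ rather than by a single term. Consequently the quadratic-variation contribution from $\PN^\DG$ appears as the $\zeta_\xi\otimes p$-integral of $\bigl(\sum_{i=1}^{\xi^\XX} f^\XX(q_i^\XX)\bigr)^2$, precisely the form featuring in \eqref{EQN:MartQuad1a}, and not as a sum of individually squared increments. Propagating this compound-jump structure correctly through both the direct generator computation (applied with $F(x,y)=x^2$ or $y^2$) and the Itô-formula computation — so that the two representations match — is the main piece of bookkeeping required; Hypothesis \ref{HYP:2}(4.iii) with $p\geq 2$ ensures that all the resulting integrals are finite.
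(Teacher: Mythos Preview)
Your proposal is correct and follows essentially the same approach as the paper, which simply states that the proof is analogous to that of Theorem~\ref{THM:MartR}. In fact, your write-up is more detailed than the paper's one-line proof: you correctly identify that the only new ingredient is the compensator of $\PN^\DG$, and you pinpoint the key bookkeeping issue---that the jump increment is the aggregated sum $\sum_{i=1}^{\xi^\XX} f^\XX(q_i^\XX)$, so its square enters the quadratic variation as a whole---which the paper leaves implicit.
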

\begin{proof}
The proof is analogous to the proof of Theorem \ref{THM:MartR}.
\end{proof}

\subsection{The bio-chemical system under protracted irradiation}\label{SEC:FLASH}

As mentioned above, before focusing on a macroscopic limit of the spatial DNA repair model, we will consider a different setting relevant to the considered application. As commented in Section \ref{SEC:Init}, the functions $\kappa$ and $\lambda$ usually include information regarding the chemical environment and radical formation. In the following treatment, we make this assumption explicit, assuming that the formation of new damages depends on the chemical environment described by a set of reaction-diffusion equations. It is worth stressing that in general, the energy deposition of the particle also affects the chemical environment so the above-mentioned reaction-diffusion equation also includes a term dependent on the energy deposition. As the chemical evolves on a much faster time scale, the concentration of chemical species will be described by a set of parabolic PDE, with a random discontinuous inhomogeneous term due to the effect of radiation. We will not consider a specific model for the chemical environment, but on the contrary, we will assume a general \textit{mass control hypothesis} that includes many possible systems proposed in the literature. Future investigation will be specifically devoted to analyzing and implementing the highly dimensional chemical system, including the homogeneous chemical stage also the heterogeneous one.

Assume a set of $L$ chemical species, then, for $i=1,\dots,L$, the concentration of the $i-$th species $\rho_i$ evolves according to

\begin{equation}\label{EQN:Chemistry}
\begin{cases}
\frac{\partial}{\partial t} \rho_i(q, t)= D_i \Delta_q \rho_i(q, t) + f_i(\rho) + G_i(q)\,, & \quad \mbox{ in } \quad  \QQ \times [0,T]\,,\\
\nabla_q \rho_i \cdot n(q) = 0 \,, & \quad \mbox{ in } \quad  \partial \QQ \times (0,T)\,,\\
\rho_i(0,q)= \rho_{0;i}(q) \,, & \quad \mbox{ in } \quad  \QQ \,.\\
\end{cases}
\end{equation}

We consider the following.

\begin{description}
\item[(v') - protracted irradiation] at a certain \textit{dose rate} $\DG$ a random number $\xi^\XX$ and $\xi^\YY$ sub-lethal and lethal lesions, respectively, are created in $\QQ$. This process is described by a random measure that depends on chemical concentration
\[
\zeta = (\zeta^\XX,\zeta^\YY) = \left(\sum_{i=0}^{\xi^\XX} \delta_{\QQ_i},\sum_{i=0}^{\xi^\YY} \delta_{\QQ_i} \right) \in \MM \times \MM\,.
\]
We assume that the random measure $\zeta$ admits a decomposition of the form
\[
\zeta_\xi(\left . q^\XX,q^\YY \right| \xi^\XX,\xi^\YY) p\left (\left . \xi^\XX,\xi^\YY\right | \rho(q, t) \right)\,,
\]
with $p(\xi^\XX,\xi^\YY)$ a discrete probability distribution on $\NN^2$.

\item[(vi) - chemical environment] for all $i=1,\dots,L$, the random discontinuous inhomogeneous term $G$ is defined as
\[
G_i : \Omega \times \QQ \to \RR_+\,,\quad \Omega \times \QQ \ni (\omega,q) \mapsto \sum_{k=1}^{\infty} \ZZ^{k;i}(q,\omega)\delta_{\tau^k_{\DG(\omega)}}\,. 
\]
\end{description}

\begin{Remark}
The random function $Z^i$ represents the energy deposited by an event and can be computed as described in Section \ref{SEC:Init}. Regarding instead the dependence of $p$ on the chemical environment, several possible choices can be made. What is currently known is that the actual number of damage created by certain radiation depends on the chemical environment. Therefore, considering the description of the initial damage distribution given in Section \ref{SEC:Init}, a meaningful choice would be to assume that given a certain specific energy deposition, the average number of induced lethal and sub-lethal lesions described in $(iv)$ depends on the chemical environment, that is we have $\lambda(z_i,\rho)$ and $\kappa(z_i,\rho_i)$.
\demo
\end{Remark}

We will assume the following to hold.

\begin{Hypothesis}\label{HYP:3}

\begin{enumerate}
\setItemnumber{5}
\item chemical environment components:
\begin{description}
\item[(5.i)] for all $i=1,\dots,L$, the random initial condition $\rho_{0;i}$ is bounded and non-negative $\mathbb{P}-$a.s., that is
\[
\rho_{0;i}(q,\omega) \in L^\infty(\QQ)\,,\quad \mbox{and} \quad \rho_{0;i}(q,\omega) \geq 0\,, \quad \mbox{ for a.e. } q \in \QQ\,,\quad \mathbb{P}-\mbox{a.s.}\,,
\]
and has finite $p-$th moment, $p \geq 1$, that is
\[
\EE \left \| \rho_{0;i}\right\|^p_\infty < \infty\,.
\]

\item[(5.ii)] there exist constants $C_0$ and $C_1$ such that
\[
\sum_{i=1}^L f_i(\rho) \leq C_0 + C_1 \sum_{i=1}^L \rho_i\,;
\]

\item[(5.iii)] for all $i=1,\dots,L$, $f_i$ is locally Lipschitz and
\[
f_i(\rho) \geq 0 \,,\quad \mbox{for all } \quad \rho=0\,;
\]

\item[(5.iv)] for all $i=1,\dots,L$, there exist $\varepsilon >0$ and $K>0$ such that
\[
|f_i(\rho)| \leq K(1+|\rho|^{2+\varepsilon})\,;
\]
\item[(5.v)] for all $i=1,\dots,L$, the random function $Z^i$ is bounded and non-negative $\mathbb{P}-$a.s., that is
\[
Z^i(q,\omega) \in L^\infty(\QQ) \,,\quad \mbox{and} \quad Z^i(q,\omega) \geq 0\,, \quad \mbox{ for a.e. } q \in \QQ\,, \quad \mathbb{P}-\mbox{a.s.}\,,
\]
and has finite $p-$th moment, $p \geq 1$, that is
\[
\EE \left \| Z^i\right\|^p_\infty < \infty\,.
\]
\end{description}
\end{enumerate}
\end{Hypothesis}

\begin{Remark}
As mentioned in the Introduction, we will not focus on specific examples of chemical environments. Nonetheless, since almost any chemical model contains second-order reaction rates, we are forced to consider more general assumptions than the standard global Lipschitz condition. For this reason, we rather consider a mass control condition in Hypothesis \ref{HYP:3}(ii). Such assumptions can be immediately seen to hold in a reaction-diffusion description of the systems introduced in \cite{abolfath2020oxygen,labarbe2020physicochemical}. 
\demo
\end{Remark}

Therefore, the resulting process is characterized by the process defined in equation \eqref{EQN:SpatGSM2PI} with the addition of the chemical system as defined in equation \eqref{EQN:Chemistry}. We thus have the following representation
\begin{equation}\label{EQN:SpatGSM2PI2}
\begin{cases}
\norm{f^\XX,\nu^\XX(t)} &= \LL^\XX_B \nu^\XX(t) +\\
&+ \int_0^t \int_{\QQ^{|(\xi^\XX,\xi^\YY)|}} \int_{\NN^2} \int_{\RR}\int_{\RR}  \left[ \norm{f^\XX,\nu^{\XX}(s_-) + \sum_{i=1}^{\xi^\XX} \delta_{q_i^\XX}} - \norm{f^\XX,\nu^{\XX}(s_-)}\right] \times\\[3pt]
&\qquad \times \Ind{\theta_1 \leq \DG}\Ind{\theta_2 \leq \zeta_\xi^\XX(q|\xi^\XX,\rho)} \PN^{\DG}(ds,dq,d\xi^\XX,d\xi^\YY,d\theta_1,d\theta_2)\,,\\[5pt]
\norm{f^\YY,\nu^\YY(t)} &= \LL^\YY_B \nu^\YY(t) +\\
&+\int_0^t \int_{\QQ^{|(\xi^\XX,\xi^\YY)|}} \int_{\NN^2} \int_{\RR}  \int_{\RR} \left[ \norm{f^\YY,\nu^{\YY}(s_-) + \sum_{i=1}^{\xi^\YY} \delta_{q_i^\YY}} - \norm{f^\YY,\nu^{\YY}(s_-)}\right] \times\\[3pt]
&\qquad \times \Ind{\theta_1 \leq \DG} \Ind{\theta_2 \leq \zeta_\xi^\YY(q|\xi^\YY,\rho)} \PN^{\DG}(ds,dq,d\xi^\XX,d\xi^\YY,d\theta_1,d\theta_2)\,,\\
\rho_i(q, t) &= \rho_{0;i}(q) + \int_0^t \left( D_i \Delta_q \rho_i(q, s) + f_i(\rho)\right) ds + \sum_{s<t} \sum_{k=1}^{\infty} \ZZ^k(q) \Ind{s = \tau^k_{\DG}}\,.
\end{cases}
\end{equation}

Augment then the filtration with the processes defined in $(v')-(vi)$.

\begin{Definition}\label{DEF:DefNu3}
We say that $\nu(t) = (\nu^\XX(t),\nu^\YY(t))$ as defined in equations \ref{DEF:Nu1}--\ref{DEF:Nu2} is a \textit{spatial radiation-induced DNA damage repair model under protracted irradiation} if $\nu = \left(\nu(t)\right)_{t \in \RR_+}$ is $\left (\mathcal{F}_t\right )_{t \in \RR_+}-$adapted and for any $f^\XX$ and $f^\YY\in C^2_0(\QQ)$ equation \eqref{EQN:SpatGSM2PI2} holds $\mathbb{P}-$a.s.
\end{Definition}

We thus have the following well-posedness result.

\begin{Theorem}\label{THM:E!3}
Let $\nu^\XX_0$ and $\nu^\YY_0$ two random measures with finite $p-$th moment, $p \geq 1$, that is it holds
\begin{equation}\label{EQN:InitN3}
\EE \norm{\mathbf{1},\nu^\XX_0}^p < \infty\,,\quad \EE \norm{\mathbf{1},\nu^\YY_0}^p < \infty\,.
\end{equation}

Then, under Hypothesis \ref{HYP:1}-\ref{HYP:2}-\ref{HYP:3}, for any $T>0$ it exists a unique strong solution of the system \eqref{EQN:SpatGSM2} in $\DD \left ([0,T],\MM \times \MM \times \left( L^\infty(\QQ) \right)^L \right)$. Also, it holds
\begin{equation}\label{EQN:SupEst3}
\mathbb{E} \sup_{t \leq T} \norm{\mathbf{1},\nu (t)}^p  < \infty\,, \quad \mathbb{E} \sup_{t \leq T} \sup_{i=1\,,\dots,L} \left \| \rho_i\right\|^p_\infty < \infty\,.
\end{equation}
\end{Theorem}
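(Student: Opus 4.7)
The plan is to exploit the fact that, in the coupled system \eqref{EQN:SpatGSM2PI2}, the reaction--diffusion PDE for $\rho$ does not depend on the measure-valued process $\nu$, while $\nu$ depends on $\rho$ only through the sampling kernel $\zeta_\xi(\,\cdot\,|\,\cdot\,,\rho)$ evaluated at the pre-jump values $\rho(\tau^k_{\DG-},\cdot)$ at the atoms of $\PN^{\DG}$. I would therefore build the joint solution pathwise, first constructing $\rho$ on $[0,T]$ and then, conditionally on $\rho$, constructing $\nu$ by invoking Theorem \ref{THM:E!2}. Denoting by $0 < \tau^1_{\DG} < \tau^2_{\DG} < \ldots$ the jump times of $\PN^{\DG}$, the number $K(T)$ of atoms in $[0,T]$ is $\mathrm{Poisson}(\DG T)$ and hence finite $\mathbb{P}$-a.s., so the construction reduces to finitely many steps on the random partition $\{0,\tau^1_{\DG},\ldots,\tau^{K(T)}_{\DG},T\}$.

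First, I would construct $\rho$ recursively between consecutive atoms of $\PN^{\DG}$. On each open interval $(\tau^k_{\DG},\tau^{k+1}_{\DG})$ the source $G_i$ vanishes and \eqref{EQN:Chemistry} reduces, component-wise, to a deterministic reaction--diffusion system with Neumann boundary conditions, locally Lipschitz reactions $f_i$, mass control \textbf{(5.ii)}, quasi-positivity \textbf{(5.iii)} and polynomial growth \textbf{(5.iv)}. These are exactly the hypotheses under which \cite{fellner2020global} provides a unique global non-negative $L^\infty(\QQ)^L$ solution on each such interval, starting from the current datum. The instantaneous update $\rho_i(\tau^k_{\DG}) = \rho_i(\tau^k_{\DG-}) + Z^{k;i}$ preserves both $L^\infty$-boundedness and non-negativity by \textbf{(5.v)}; concatenating the pieces yields a pathwise unique element $\rho \in \DD([0,T], L^\infty(\QQ)^L)$. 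Iterating the pathwise $L^\infty$-bound across the $K(T)$ jumps and taking expectation, a Wald-type argument combining the moment bounds \textbf{(5.i)}, \textbf{(5.v)} and the Poisson moments of $K(T)$ delivers the required estimate $\mathbb{E}\sup_{t\leq T}\|\rho_i(t)\|_\infty^p<\infty$.

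With $\rho$ now treated as a known, $\left(\FF_t\right)_{t\in\RR_+}$-adapted driver, the equations for $\nu$ in \eqref{EQN:SpatGSM2PI2} fall within the scope of Theorem \ref{THM:E!2}, because conditionally on $\rho$ the kernel $\zeta_\xi(\,\cdot\,|\,\cdot\,,\rho(\tau^k_{\DG-},\cdot))$ is a probability measure satisfying Hypothesis \ref{HYP:2}\textbf{(4.i)--(4.iii)} pathwise. Theorem \ref{THM:E!2} then yields a pathwise unique strong solution $\nu \in \DD([0,T], \MM \times \MM)$ together with the moment bound $\mathbb{E}\sup_{t\leq T}\norm{\unit,\nu(t)}^p<\infty$, which combined with the previous PDE estimate gives \eqref{EQN:SupEst3}. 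The main obstacle I anticipate is the passage from $L^p$- to $L^\infty$-a priori estimates on $\rho$ under only mass control and polynomial growth, since the classical $L^p$-theory of \cite{pierre2010global} does not automatically upgrade to $L^\infty$; I would follow the bootstrap scheme of \cite{fellner2020global}, where a De Giorgi/Moser iteration on the bounded smooth domain $\QQ$ with Neumann data closes the loop, while the finitely many $L^\infty$ jumps of $\rho$ only refresh the initial datum and do not affect the continuous-time analysis on each sub-interval.
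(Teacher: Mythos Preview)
Your proposal is correct and uses exactly the same ingredients as the paper's proof: the global $L^\infty$ well-posedness result of \cite{fellner2020global} for the reaction--diffusion system between jumps, the preservation of non-negativity and $L^\infty$-boundedness at the jumps via Hypothesis~\ref{HYP:3}\textbf{(5.v)}, and then the measure-valued well-posedness from Theorems~\ref{THM:E!}--\ref{THM:E!2}. The only organizational difference is that the paper constructs $\rho$ and $\nu$ simultaneously between successive jump times $T_m$ of the full process, whereas you exploit the one-way coupling (correctly noting that $\rho$ is driven solely by the exogenous Poisson measure $\PN^{\DG}$ and does not depend on $\nu$) to build $\rho$ first on all of $[0,T]$ and then $\nu$ conditionally on $\rho$; this decoupling is a clean simplification but not a genuinely different argument.
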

\begin{proof}
The main step of the proof follows alike Theorems \ref{THM:E!}-\ref{THM:E!2}, noticing that between consecutive jump times $T_m$ and $T_{m+1}$, under Hypothesis \ref{HYP:3} using \cite[Theorem 1.1]{fellner2020global}, equation \eqref{EQN:Chemistry} admits a unique strong solution in $C\left ([T_m,T_{m+1});\left( L^\infty(\QQ) \right)^L \right)$. In fact, since $Z$ is bounded and non-negative $\mathbb{P}-$a.s. we can infer that for any jump time $T_m$ it holds
\[
\begin{split}
\rho_i(q,T_{m}) &:= \lim_{t \uparrow T_{m}}\rho_i(q,t) + \ZZ(q)\,,\\
\rho_i(q,T_{m},\omega) \in L^\infty(\QQ)\,,&\quad \mbox{and} \quad \rho_i(q,T_{m},\omega) \geq 0\,, \quad \mbox{ for a.e. } q \in \QQ\,,\quad \mathbb{P}-\mbox{a.s.}\,.
\end{split}
\]

The rest of the proof follows Theorems \ref{THM:E!}-\ref{THM:E!2}.
\end{proof}

\begin{Remark}
It is worth noticing that, using \cite[Theorem 1.1]{fellner2020global}, we can infer that, between consecutive jump times $T_m$ and $T_{m+1}$, it holds $\rho \in C\left ((T_m,T_{m+1});\left( C^2_0(\QQ) \right)^L \right)$. Nonetheless, since we cannot require $Z$ to be smooth, we cannot conclude that $\rho \in \mathcal{D}\left ((0,T);\left( C^2_0(\QQ) \right)^L \right)$ as at the jump times $T_m$, $\rho(T_m,q)$ can only be shown to be bounded.
\demo
\end{Remark}

\section{The large population limit}


In the following we assume the model coefficients depend on a parameter $\KK$ and study the behavior of the measure-valued solution $\nu^\KK = \left (\NXK,\NYK\right)$ studied in previous Sections as $\KK \to \infty$.

We thus consider a sequence of the initial measure so that $\frac{1}{\KK}\left (\NXK_0,\NYK_0\right) \to \left (u^\XX_0,u^\YY_0\right)$ and we assume that the rates of the model as introduced in Sections \ref{SEC:SpatGSM2}-\ref{SEC:ProtI} are rescaled as follows
\[
\begin{split}
\AG^\KK (q,v) := \AG\left(q,\frac{v}{\KK}\right )\,,\quad &\RG^\KK (q,v) := \RG\left(q,\frac{v}{\KK}\right )\,,\\
\BG^\KK (q_1,q_2,v) := \frac{1}{\KK}\BG\left(q_1,q_2,\frac{v}{\KK}\right )\,,\quad &\DG^\KK := \KK \DG\,.\\
\end{split}
\]

We thus aim at characterizing the limit as $\KK \to \infty$ of the rescaled measure
\begin{equation}\label{EQN:ResM}
u^\KK(t):=\frac{1}{\KK}\sum_{i=1}^{N(t)} \delta_{Q_i(t)}\delta_{s_i} = \frac{1}{\KK}\nu^\KK(t)\,.
\end{equation}
As above we also define for short the marginal distributions
\begin{equation}\label{DEF:ResM2}
u^{\XX;\KK}(t)(\cdot) := u^\KK(t)\left(\, \cdot\,,\XX\right)\,,\quad u^{\YY;\KK}(t)(\cdot) := u^\KK(t)\left(\, \cdot\,,\YY\right)\,.
\end{equation}

Analogous arguments to the one derived in Sections \ref{SEC:SpatGSM2}-\ref{SEC:ProtI} show that $\left (\NXK(t),\NYK(t)\right )_{t \geq 0}$ is a Markov process with infinitesimal generator of the form
\begin{equation}\label{EQN:InfGenAlldProtL}
\LL^\KK F_{(f^\XX,f^\YY)}(\nu) = \LL_d^\KK F_{(f^\XX,f^\YY)}(\nu) + \sum_{h \in \{\RG,\AG,\BG,\DG\}} \LL_{h}^\KK F_{(f^\XX,f^\YY)}(\nu)\,,    
\end{equation}
with
\begin{equation}\label{EQN:InfDiffK}
\begin{split}
\LL_d^\KK F_{(f^\XX,f^\YY)}(\nu) &= \norm{\LL_d^\XX f^\XX,\nu^\XX} \frac{\partial}{\partial x}  F \left (\norm{f^\XX,\nu^XX},\norm{f^\YY,\nu^XX}\right) + \norm{\LL_d^\YY f^\YY,\nu^\YY} \frac{\partial}{\partial y}  F \left (\norm{f^\XX,\nu},\norm{f^\YY,\nu}\right) + \\
&+ \norm{\nabla^T f^\XX\, \sigma^\XX \,\nabla f^\XX,\nu^\XX} \frac{1}{\KK}\frac{\partial^2}{\partial x^2}  F \left (\norm{f^\XX,\nu},\norm{f^\YY,\nu}\right) + \norm{\nabla^T f^\YY \, \sigma^\YY \,\nabla f^\YY,\nu^\YY} \frac{1}{\KK} \frac{\partial^2}{\partial y^2}  F \left (\norm{f^\XX,\nu},\norm{f^\YY,\nu}\right)\,,
\end{split}
\end{equation}
and
\begin{equation}\label{EQN:InfReactLK}
\begin{split}
\LL_{\RG}^\KK F_{(f^\XX,f^\YY)}(\nu) &= \KK \int_{\QQ} \RG(q,\nu)\left[F_{(f^\XX,f^\YY)}\left(\nu^\XX - \frac{1}{\KK}\delta_{q},\nu^\YY\right) - F_{(f^\XX,f^\YY)}(\nu) \right]\nu^\XX(dq)\,,\\
\LL_{\AG}^\KK F_{(f^\XX,f^\YY)}(\nu) &= \KK \int_{\QQ} \int_{\QQ} \AG(q,\nu)\left[F_{(f^\XX,f^\YY)}\left(\nu^\XX - \frac{1}{\KK} \delta_{q},\nu^\YY + \frac{1}{\KK} \delta_{\bar{q}}\right) - F_{(f^\XX,f^\YY)}(\nu) \right]m^{\AG}(\bar{q}|q)d\bar{q}\nu^\XX(dq)\,,\\
\LL_{\BG}^\KK F_{(f^\XX,f^\YY)}(\nu) &= \KK^2 \int_{\tilde{\QQ}^2} \int_{\QQ} \PG(q_1,q_2)\BG(q_1,q_2,\nu)\left[F_{(f^\XX,f^\YY)}\left(\nu^\XX - \frac{1}{\KK} \delta_{q_1} - \frac{1}{\KK} \delta_{q_2},\nu^\YY + \frac{1}{\KK} \delta_{\bar{q}}\right) - F_{(f^\XX,f^\YY)}(\nu) \right] \times \\
&\qquad \qquad \qquad \qquad \times m^{\BG}(\bar{q}|q_1,q_2)d\bar{q}\nu^\XX(dq_1)\nu^\XX(dq_2) + \\
&+\int_{\tilde{\QQ}^2} \left (1-\PG(q_1,q_2)\right) \BG(q_1,q_2,\nu)\left[F_{(f^\XX,f^\YY)}\left(\nu^\XX -\frac{1}{\KK} \delta_{q_1} -\frac{1}{\KK} \delta_{q_2},\nu^\YY\right) - F_{(f^\XX,f^\YY)}(\nu) \right] \nu^\XX(dq_1)\nu^\XX(dq_2)\,,\\
\LL_{\DG}^\KK F_{(f^\XX,f^\YY)}(\nu) &= \DG\int_{\NN^2} \int_{\QQ^{|(\xi^\XX,\xi^\YY)|}} \left[F_{(f^\XX,f^\YY)}\left (\nu^\XX+\frac{1}{\KK}\sum_{i=1}^{\xi^\XX}\delta_{q_i^\XX},\nu^\YY+\frac{1}{\KK}\sum_{i=1}^{\xi^\YY}\delta_{q_i^\YY}\right ) - F_{(f^\XX,f^\YY)}(\nu) \right] \times \\
&\qquad \qquad \qquad \qquad \times \zeta_\xi(\left . q^\XX,q^\YY \right| \xi^\XX,\xi^\YY) dp(\xi^\XX,\xi^\YY) dq^\XX\,dq^\YY \,.\\
\end{split}
\end{equation}

We thus can infer from Theorem \ref{THM:MartR2} the following martingale property for the rescaled system.

\begin{Lemma}\label{LEM:MartRK}
Consider $\KK \geq 1$ fixed, assume that Hypothesis \ref{HYP:1}-\ref{HYP:2} holds true and that $\nu^\XX_0$ and $\nu^\YY_0$ are two random measures independent with finite $p-$th moment, $p \geq 2$. Then, the processes $\Mar^\XX$ and $\Mar^\YY$ defined for $f^\XX\,,\,f^\YY \in C^2_0$ by
\begin{equation}\label{EQN:Mart31ab}
\begin{cases}
\Mar^\XX(t) &= \norm{f^\XX,u^{\XX;\KK}(t)} - \norm{f^\XX,u^{\XX;\KK}_0} - \int_0^t \norm{\LL^\XX_d f^\XX (x),u^{\XX;\KK}(s)}ds +\\[3pt]
&+ \int_0^t\int_{\QQ} \left[ \RG^\KK(q,\nu) + \AG^\KK(q,\nu)\right] f^\XX(q)u^{\XX;\KK}(s)(dq)ds+\\[3pt]
&+\int_0^t\int_{\tilde{\QQ}^2}\BG^\KK(q_1,q_2,\nu) (f^\XX(q_1)+f^\XX(q_2))u^{\XX;\KK}(s)(dq_1)u^{\XX;\KK}(s)(dq_2)ds+\\[3pt]
&-\int_0^t \int_{\NN} \int_{\tilde{\QQ}^{\xi^\XX}} \DG^\KK \left (\sum_{i=1}^{\xi^\XX}f^\XX(q_i^\XX) \right) \zeta_{\xi^\XX}(\left . q^\XX \right| \xi^\XX) dq^\XX  dp_{\XX}(\xi^\XX) ds\,,\\[5pt]
\Mar^\YY(t) &= \norm{f^\YY,u^{\YY;\KK}(t)} - \norm{f^\YY,u^{\YY;\KK}} - \int_0^t \norm{\LL^\YY_d f^\YY (x),u^{\YY;\KK}(s)}ds +\\[3pt]
&- \int_0^t \int_{\QQ} \AG^\KK(q,\nu) \int_{\QQ} m^{\AG}(\bar{q}|q) f^\YY(\bar{q})d\bar{q} u^{\XX;\KK}(s)(dq)ds+\\[3pt]
&- \int_0^t\int_{\tilde{\QQ}^2} \int_{\QQ} \PG(q_1,q_2) \BG^\KK(q_1,q_2,\nu) f^\YY(\bar{q}) m^{\BG}(\bar{q}|q_1,q_2)d\bar{q}u^{\XX;\KK}(s)(dq_1)u^{\XX;\KK}(s)(dq_2)ds+\\[3pt]
&-\int_0^t \int_{\NN} \int_{\tilde{\QQ}^{\xi^\YY}} \DG^\KK \left (\sum_{i=1}^{\xi^\YY} f^\YY(q_i^\YY) \right) \zeta_{\xi^\YY}(\left . q^\YY \right| \xi^\YY)dq^\YY dp_{\YY}(\xi^\YY) ds\,,
\end{cases}
\end{equation}
are c\'adl\'ag $L^2-$martingales starting at 0 with predictable quadratic variation given by
\begin{equation}\label{EQN:MartQuad1ab}
\begin{cases}
\norm{\Mar^\XX}(t) &= \frac{1}{\KK}\int_0^t \norm{\nabla^T f^\XX \sigma^\XX \nabla f^\XX , u^{\XX;\KK}}ds +\\[3pt]
&+ \frac{1}{\KK}\int_0^t \int_{\QQ} \left[ \RG^\KK(q,\nu) + \AG^\KK(q,\nu) \int_{\QQ} m^{\AG}(\bar{q}|q)d\bar{q}  \right] \left( f^\XX(q) \right)^2 u^{\XX;\KK}(s)(dq)ds+\\[3pt]
& +\frac{1}{\KK}\int_0^t\int_{\tilde{\QQ}^2} \int_{\QQ} \PG(q_1,q_2)\BG^\KK(q_1,q_2,\nu) (f^\XX(q_1)+f^\XX(q_2))^2 m^{\BG}(\bar{q}|q_1,q_2)d\bar{q}u^{\XX;\KK}(s)(dq_1)u^{\XX;\KK}(s)(dq_2)ds+\\[3pt]
& +\frac{1}{\KK}\int_0^t\int_{\tilde{\QQ}^2} \left (1-\PG(q_1,q_2)\right)\BG^\KK(q_1,q_2,\nu) (f^\XX(q_1)+f^\XX(q_2))^2 u^{\XX;\KK}(s)(dq_1)u^{\XX;\KK}(s)(dq_2)ds+\\[3pt]
&-\frac{1}{\KK}\int_0^t \int_{\NN} \int_{\tilde{\QQ}^{\xi^\XX}} \DG^\KK \left (\sum_{i=1}^{\xi^\XX}f^\XX(q_i^\XX) \right)^2 \zeta_{\xi^\XX}(\left . q^\XX \right| \xi^\XX) dq^\XX  dp_{\XX}(\xi^\XX) ds\,,\\[5pt]
\norm{\Mar^\YY}(t)  &= \frac{1}{\KK}\int_0^t \norm{\nabla^T f^\YY \sigma^\YY \nabla f^\YY , u^{\YY;\KK}}ds +\\[3pt]
&- \frac{1}{\KK}\int_0^t \int_{\QQ} \AG^\KK(q,\nu) \int_{\QQ} m^{\AG}(\bar{q}|q) \left( f^\YY(\bar{q})\right)^2 d\bar{q} u^{\XX;\KK}(s)(dq)ds+\\[3pt]
&-\frac{1}{\KK} \int_0^t\int_{\tilde{\QQ}^2} \int_{\QQ} \PG(q_1,q_2) \BG^\KK(q_1,q_2,\nu) \left( f^\YY(\bar{q})\right)^2 m^{\BG}(\bar{q}|q_1,q_2)d\bar{q}u^{\XX;\KK}(s)(dq_1)u^{\XX;\KK}(s)(dq_2)ds+\\[3pt]
&-\frac{1}{\KK}\int_0^t \int_{\NN} \int_{\tilde{\QQ}^{\xi^\YY}} \DG^\KK \left (\sum_{i=1}^{\xi^\YY} f^\YY(q_i^\YY) \right)^2 \zeta_{\xi^\YY}(\left . q^\YY \right| \xi^\YY)dq^\YY dp_{\YY}(\xi^\YY) ds\,.
\end{cases}
\end{equation}
\end{Lemma}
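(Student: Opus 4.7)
The plan is to obtain the lemma as a direct corollary of Theorem \ref{THM:MartR2} applied to the unrescaled point-measure process $\nu^\KK = \KK u^\KK$, which satisfies the equations of Section \ref{SEC:ProtI} with the $\KK$-dependent coefficients $\RG^\KK, \AG^\KK, \BG^\KK, \DG^\KK$. Hypothesis \ref{HYP:1}--\ref{HYP:2} continue to hold under these rescalings (for example, $\AG^\KK(q,v) = \AG(q, v/\KK) \leq \bar{\AG}(1+v/\KK) \leq \bar{\AG}(1+v)$ preserves the linear-growth condition, and the other rates are handled analogously), hence Theorems \ref{THM:E!2} and \ref{THM:MartR2} both apply to $\nu^\KK$. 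In particular, for any $f^\XX, f^\YY \in C^2_0(\QQ)$ one obtains two c\`adl\`ag $L^2$-martingales $\widetilde{\Mar}^\XX, \widetilde{\Mar}^\YY$ attached to $\norm{f^\XX,\nu^{\XX;\KK}(\cdot)}$ and $\norm{f^\YY,\nu^{\YY;\KK}(\cdot)}$, together with the explicit expressions for their predictable brackets analogous to \eqref{EQN:MartQuad1a} with each coefficient replaced by its $\KK$-rescaled version.

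The second step is to exploit the trivial identity $\norm{f, u^{\bullet;\KK}} = \frac{1}{\KK}\norm{f, \nu^{\bullet;\KK}}$ and define $\Mar^{\bullet}(t) := \frac{1}{\KK}\widetilde{\Mar}^\bullet(t)$ for $\bullet \in \{\XX, \YY\}$; multiplication by the deterministic constant $\frac{1}{\KK}$ preserves the c\`adl\`ag, martingale, and $L^2$ properties, and automatically yields $\norm{\Mar^\bullet} = \frac{1}{\KK^2}\norm{\widetilde{\Mar}^\bullet}$. It then remains to rewrite the drift parts of $\widetilde{\Mar}^\bullet$ and their brackets in terms of the rescaled measure $u^\KK$, using the substitutions $\RG^\KK(q, \nu^\KK) = \RG(q, u^\KK)$, $\AG^\KK(q, \nu^\KK) = \AG(q, u^\KK)$, $\BG^\KK(q_1,q_2, \nu^\KK) = \frac{1}{\KK}\BG(q_1, q_2, u^\KK)$, $\DG^\KK = \KK\DG$, together with $\nu^{\bullet;\KK}(dq) = \KK\, u^{\bullet;\KK}(dq)$. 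The $L^2$-integrability needed throughout follows from \eqref{EQN:SupEst2} with $p=2$ applied to $\nu^\KK$ (the bound being uniform in $\KK$ after dividing by $\KK$), combined with the boundedness of $f^\XX, f^\YY$, $\PG$ and the growth assumptions on the rates granted by Hypothesis \ref{HYP:1}.

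The main obstacle is not conceptual but purely combinatorial: one must carefully distribute the powers of $\KK$ between (i) the rescaling of the reaction rates, (ii) the density ratio $\nu^\KK = \KK u^\KK$ appearing under single and double integrals, and (iii) the size $\frac{1}{\KK}$ of each individual jump in the rescaled process. For the monomolecular terms, the $\KK$ from $\nu^\KK(dq) = \KK u^\KK(dq)$ cancels the outer $\frac{1}{\KK}$ coming from $\Mar^\bullet = \widetilde{\Mar}^\bullet/\KK$; for the pairwise integrals the $\KK^2$ from the double change of measure combines with the $\frac{1}{\KK}$ in $\BG^\KK$ to give a net $\KK$ that again cancels the prefactor; and for the protracted-irradiation term the factor $\DG^\KK = \KK \DG$ plays the analogous role, producing the drift expressions in \eqref{EQN:Mart31ab}. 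The corresponding bookkeeping for the brackets carries an extra $\frac{1}{\KK}$ (since $\norm{\Mar^\bullet} = \norm{\widetilde{\Mar}^\bullet}/\KK^2$), which combines with the same rate/density/amplitude manipulations to produce the global prefactor $\frac{1}{\KK}$ visible in \eqref{EQN:MartQuad1ab}. Beyond this careful tracking of $\KK$-powers, no new analytical ingredient is required.
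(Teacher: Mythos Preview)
Your proposal is correct and essentially matches the paper's approach: the paper's proof consists of the single line ``The proof is analogous to the proof of Theorem \ref{THM:MartR}'', i.e.\ one reruns the ``compute $\langle f,\nu\rangle^2$ via the generator versus via It\^o and compare'' argument with the rescaled generator $\LL^\KK$ in place of $\LL$. Your version packages the same idea slightly more cleanly, by invoking Theorem \ref{THM:MartR2} for the unrescaled process $\nu^\KK$ and then dividing by $\KK$, but the content is identical.
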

\begin{proof}
The proof is analogous to the proof of Theorem \ref{THM:MartR}.
\end{proof}

We assume the following.

\begin{Hypothesis}\label{HYP:4}
\begin{enumerate}
\setItemnumber{6}
\item rescaled system:
\begin{description}
\item[(6.i)] the initial measure $u_0^\KK = (u_0^{\XX;\KK},u_0^{\YY;\KK})$ converges in law for the weak topology on $\MMPP(\QQ) \times \MMPP(\QQ)$ to some deterministic finite measure $u_0 = (u_0^{\XX},u_0^{\YY}) \in \MMPP(\QQ) \times \MMPP(\QQ)$; also we assume $\sup_{\KK} \mathbb{E} \norm{\unit,u^{\KK}_0}^3 < \infty$;\\
\item[(6.ii)] all the parameters $\RG^\KK$, $\AG^\KK$ and $\BG^\KK$ are continuous on the corresponding space, that is either $\QQ \times \RR$ or $\QQ \times \QQ \times \RR$ and they are Lipschitz continuous w.r.t. the last variable, that is there exist positive constants $L_\RG$, $L_\AG$ and $L_\BG$ such that, for all $q\,,\,q_1\,,\,q_2 \in \QQ$ and $v_1\,,\,v_2\in \RR$ it holds
\[
\begin{split}
&\left |\RG(q,v_1)-\RG(q,v_2)\right| \leq L_\RG \left |v_1-v_2 \right |\,,\\
&\left |\AG(q,v_1)-\RG(q,v_2)\right| \leq L_\AG \left |v_1-v_2 \right |\,,\\
&\left |\BG(q_1,q_2,v_1)-\RG(q_1,q_2,v_2)\right| \leq L_\BG \left |v_1-v_2 \right |\,.\\
\end{split}
\]
\end{description}
\end{enumerate}
\end{Hypothesis}

Next is the main result of the present section.

\begin{Theorem}\label{THM:Conv}
Assume that Hypothesis \ref{HYP:1}-\ref{HYP:2}-\ref{HYP:4} holds and consider $u^\KK$ as defined in equation \eqref{EQN:ResM}. Then for all $T>0$, the sequence $\left(u^\KK\right)_{\KK \in \mathbb{N}}$ converges in law in $\mathcal{D}\left([0,T];\MMPP(\QQ)\right)$ to a deterministic continuous measure-valued function in $\mathcal{C}\left([0,T];\MMPP(\QQ)\right)$, which is the unique weak solution of the following non-linear integro-differential equation, $f^\XX\,,\,f^\YY \in C^2_0$,
\begin{equation}\label{EQN:DLimit}
\begin{cases}
\norm{f^\XX,u^\XX(t)} &= \norm{f^\XX,u^\XX_0} + \int_0^t \norm{\LL^\XX_d f^\XX (x),u^\XX(s)}ds +\\[3pt]
&- \int_0^t\int_{\QQ} \left[ \RG(q,u) + \AG(q,u)\right] f^\XX(q)u^\XX(s)(dq)ds+\\[3pt]
&-\int_0^t\int_{\tilde{\QQ}^2}\BG(q_1,q_2,u) (f^\XX(q_1)+f^\XX(q_2))u^\XX(s)(dq_1)u^\XX(s)(dq_2)ds+\\[3pt]
&+\int_0^t \int_{\NN} \int_{\tilde{\QQ}^{\xi^\XX}} \DG \left (\sum_{i=1}^{\xi^\XX}f^\XX(q_i^\XX) \right) \zeta_{\xi^\XX}(\left . q^\XX \right| \xi^\XX) dq^\XX  dp_{\XX}(\xi^\XX) ds\,,\\[5pt]
\norm{f^\YY,u^\YY(t)} &= \norm{f^\YY,u^\YY} + \int_0^t \norm{\LL^\YY_d f^\YY (x),u^\YY(s)}ds +\\[3pt]
&+ \int_0^t \int_{\QQ} \AG(q,u) \int_{\QQ} m^{\AG}(\bar{q}|q) f^\YY(\bar{q})d\bar{q} u^\XX(s)(dq)ds+\\[3pt]
&+ \int_0^t\int_{\tilde{\QQ}^2} \int_{\QQ} \PG(q_1,q_2) \BG(q_1,q_2,u) f^\YY(\bar{q}) m^{\BG}(\bar{q}|q_1,q_2)d\bar{q}u^\XX(s)(dq_1)u^\XX(s)(dq_2)ds+\\[3pt]
&+\int_0^t \int_{\NN} \int_{\tilde{\QQ}^{\xi^\YY}} \DG \left (\sum_{i=1}^{\xi^\YY} f^\YY(q_i^\YY) \right) \zeta_{\xi^\YY}(\left . q^\YY \right| \xi^\YY)dq^\YY dp_{\YY}(\xi^\YY) ds\,.
\end{cases}
\end{equation}
Further, $u$ satisfies
\begin{equation}\label{EQN:UEst}
\sup_{t \in [0,T]} \norm{\unit,u(t)}<\infty\,.
\end{equation}
\end{Theorem}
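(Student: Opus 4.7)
The plan is to follow the classical three-step scheme for measure-valued large-population limits: (i) establish uniform moment estimates on the rescaled sequence $\left(u^\KK\right)_{\KK \in \NN}$; (ii) prove tightness in $\mathcal{D}([0,T];\MMPP(\QQ) \times \MMPP(\QQ))$ via the Aldous-Rebolledo criterion applied to the semimartingale decomposition provided by Lemma \ref{LEM:MartRK}; (iii) identify any subsequential limit as a solution of the integro-differential system \eqref{EQN:DLimit}; and (iv) prove uniqueness of the limit equation, so that tightness plus uniqueness yield convergence of the full sequence to the deterministic limit.

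For the moment bounds, I would first rerun the estimates of Theorems \ref{THM:E!}--\ref{THM:E!2} on $\nu^\KK$ and divide by $\KK$, noting that the chosen rescaling $\BG^\KK = \KK^{-1}\BG$ exactly compensates the $\KK^2$ factor coming from sampling two points of a measure of mass $\KK$, while $\DG^\KK = \KK \DG$ compensates the $\KK^{-1}$ in the creation terms. Together with Hypothesis \ref{HYP:4}(6.i) this gives, for $p \in \{1,2,3\}$,
\begin{equation*}
\sup_{\KK} \mathbb{E} \sup_{t \in [0,T]} \norm{\unit, u^\KK(t)}^p \leq C_{T,p} < \infty\,.
\end{equation*}
From \eqref{EQN:MartQuad1ab} each term in the predictable quadratic variation carries a factor $\KK^{-1}$, so that $\mathbb{E}\norm{\Mar^{\XX;\KK}}(T) + \mathbb{E}\norm{\Mar^{\YY;\KK}}(T) \leq C_{T,f}/\KK \to 0$. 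The finite-variation parts in \eqref{EQN:Mart31ab} have increments controlled uniformly in $\KK$ by the moment bounds and by Hypothesis \ref{HYP:1}. Combining these two controls with Aldous's criterion for each fixed $f^\XX, f^\YY$ in a countable dense subset of $C^2_0(\QQ)$, and applying Roelly's criterion for tightness in $\MMPP(\QQ)$, I conclude tightness of $\left(u^\KK\right)_\KK$. Since the jumps of $u^\KK$ are all of size $O(\KK^{-1})$ uniformly in the test function, any limit point lives in $\mathcal{C}([0,T];\MMPP(\QQ) \times \MMPP(\QQ))$.

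To identify the limit, extract a convergent subsequence with limit $u$. The martingale component vanishes in $L^2$ thanks to the $\KK^{-1}$ factor in the bracket, and the Lipschitz-in-$v$ continuity of the rates (Hypothesis \ref{HYP:4}(6.ii)) together with continuity of the kernels $\Gamma^h$, $m^\AG$, $m^\BG$ and $\zeta_\xi$ allows one to pass to the limit in each compensator term in \eqref{EQN:Mart31ab} via continuous mapping and uniform integrability (guaranteed by the third-moment bound). This shows $u$ satisfies \eqref{EQN:DLimit}. The bound \eqref{EQN:UEst} follows from the uniform moment estimate and Fatou.

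The main obstacle is uniqueness of \eqref{EQN:DLimit}, since the pairwise-interaction terms are quadratic in the unknown measure and thus not globally Lipschitz on $\MMPP(\QQ)$. To handle this I would work with the variation-type distance
\begin{equation*}
\Phi(t) := \sup_{\|f\|_\infty \leq 1} \bigl|\langle f, u^{1,\XX}(t) - u^{2,\XX}(t)\rangle\bigr| + \sup_{\|f\|_\infty \leq 1} \bigl|\langle f, u^{1,\YY}(t) - u^{2,\YY}(t)\rangle\bigr|\,,
\end{equation*}
for two solutions $u^1, u^2$ starting from the same $u_0$. Using the a priori mass bound \eqref{EQN:UEst} to control $\norm{\unit,u^i(t)}$, the Lipschitz assumption \ref{HYP:4}(6.ii) and the boundedness of $\Gamma^h$, the difference of the $\BG$-terms is estimated by expanding
\begin{equation*}
u^{1,\XX} \otimes u^{1,\XX} - u^{2,\XX} \otimes u^{2,\XX} = (u^{1,\XX} - u^{2,\XX}) \otimes u^{1,\XX} + u^{2,\XX} \otimes (u^{1,\XX} - u^{2,\XX})\,,
\end{equation*}
which yields a locally Lipschitz bound of the form $\Phi'(t) \leq C_T \Phi(t)$ in integral form. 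Gronwall's lemma then gives $\Phi \equiv 0$. Uniqueness combined with tightness implies that the whole sequence $u^\KK$ converges in law (hence in probability, since the limit is deterministic) to the unique solution of \eqref{EQN:DLimit}, completing the proof.
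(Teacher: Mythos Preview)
Your proposal is essentially correct and follows the same four-step architecture the paper uses (uniform moments $\to$ Aldous--Rebolledo tightness via Roelly's criterion $\to$ vanishing of the martingale part and identification of the limit $\to$ uniqueness and conclusion). The moment bound, the $\KK^{-1}$ decay of the predictable brackets, the $O(\KK^{-1})$ jump size giving $\mathcal{C}$-continuity of limit points, and the tensor decomposition $u^{1}\otimes u^{1}-u^{2}\otimes u^{2}=(u^{1}-u^{2})\otimes u^{1}+u^{2}\otimes(u^{1}-u^{2})$ are exactly the ingredients the paper uses.

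There is one genuine technical gap in your uniqueness step. You define
\[
\Phi(t)=\sup_{\|f\|_\infty\le 1}\bigl|\langle f,u^{1,\XX}(t)-u^{2,\XX}(t)\rangle\bigr|+\sup_{\|f\|_\infty\le 1}\bigl|\langle f,u^{1,\YY}(t)-u^{2,\YY}(t)\rangle\bigr|
\]
and then estimate the right-hand side of \eqref{EQN:DLimit} term by term. But the weak formulation contains $\langle \LL_d^\XX f^\XX,u^{1,\XX}(s)-u^{2,\XX}(s)\rangle$, and for a merely bounded $f$ with $\|f\|_\infty\le 1$ this term is not defined, while for $f\in C^2_0$ with $\|f\|_\infty\le 1$ the quantity $\|\LL_d^\XX f\|_\infty$ is uncontrolled. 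So the Gronwall inequality $\Phi(t)\le C\int_0^t\Phi(s)\,ds$ cannot be closed directly from the weak equation. The paper circumvents this by first passing to the mild (Duhamel) representation using the reflected-diffusion semigroups $T^\XX(t)$, $T^\YY(t)$ generated by $\LL_d^\XX$, $\LL_d^\YY$: in that representation the diffusion is absorbed into $T^\XX(t-s)f^\XX$, and the contraction property $\|T^\XX(t-s)f\|_\infty\le\|f\|_\infty\le 1$ restores the closure of the Gronwall loop. Inserting this step makes your argument complete; without it the diffusion contribution is unhandled.

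A second, minor point: the paper first obtains tightness in the \emph{vague} topology from Roelly's criterion, and then upgrades to the \emph{weak} topology by showing separately that $\langle\unit,u^{\KK}\rangle$ converges (using continuity of the limit and \cite{meleard1993convergences}). Your sketch elides this distinction; it is worth making explicit since convergence in $\MMPP(\QQ)$ with the weak topology is what the statement asserts.
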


The steps of the proof of Theorem \ref{THM:Conv} are standard in the literature, \cite{isaacson2022mean,bansaye2015stochastic}, or also \cite{isaacson2022mean} for the case of a bimolecular reaction. Nonetheless, general results that include the case treated in the present paper are not available in the literature since \cite{isaacson2022mean} cannot include zero-th order reactions whereas other results do not include pairwise interaction. Therefore, the study of the large-population limit of a spatial model with pairwise interaction and random generation of lesions has never been considered in the literature, so existing results cannot be directly applied to the present case. As customary in literature, for the sake of readability the proof of Theorem \ref{THM:Conv} will be divided into four steps.

\subsection{Step 1: uniqueness of solution}

The first result concerns the proof of the uniqueness of the limiting process \eqref{EQN:DLimit}.

\begin{Theorem}\label{THM:UniqK}
There exists a unique solution to the equation \eqref{EQN:DLimit} in $\mathcal{C}\left([0,T];\MMPP(\QQ)\right)$.
\end{Theorem}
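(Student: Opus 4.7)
The plan is to combine an a priori mass bound, a semigroup-duality mild reformulation that absorbs the unbounded diffusive generators $\LL_d^\XX, \LL_d^\YY$, and a Gronwall estimate in total-variation norm on the difference of two solutions. First, I would establish \eqref{EQN:UEst} by testing \eqref{EQN:DLimit} against $f^\XX \equiv f^\YY \equiv \unit$, admissible since the Neumann condition yields $\LL_d^\bullet \unit = 0$. Using Hypotheses \ref{HYP:1}(2.ii), (2.iii), (3.i) and \ref{HYP:2}(4.iii) one obtains first $\norm{\unit, u^\XX(t)} \le \norm{\unit, u^\XX_0} + \DG\, t \int_\NN \xi^\XX\,dp_\XX(\xi^\XX)$, and then a linear--quadratic estimate on $\norm{\unit, u^\YY(t)}$ in terms of $\norm{\unit, u^\XX(\cdot)}$. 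Iteration delivers a constant $C_T < \infty$ with $\sup_{[0,T]}\bigl(\norm{\unit, u^\XX(t)} + \norm{\unit, u^\YY(t)}\bigr) \le C_T$, which simultaneously proves \eqref{EQN:UEst} and will be the key ingredient used below to close Gronwall.

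Next, let $(S_t^\XX)_{t\ge 0}$, $(S_t^\YY)_{t\ge 0}$ be the Feller semigroups on $C(\bar{\QQ})$ generated by $\LL_d^\XX$, $\LL_d^\YY$ with reflecting Neumann boundary (existence follows from Hypothesis \ref{HYP:1}(2.i) together with the regularity of $\partial\QQ$). For $\phi \in C(\bar{\QQ})$ and fixed $t>0$, the function $g(s,q) := (S_{t-s}^\XX \phi)(q)$ is $C^{1,2}$ on $(0,t)\times\bar{\QQ}$ with $\partial_s g + \LL_d^\XX g = 0$ and $\nabla_q g \cdot n = 0$ on $\partial\QQ$, hence admissible as a time-dependent test in \eqref{EQN:DLimit}. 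A chain-rule manipulation of $s\mapsto \norm{g(s,\cdot), u^\XX(s)}$ eliminates the diffusive contributions and produces the mild identity
\begin{align*}
\norm{\phi, u^\XX(t)} &= \norm{S_t^\XX \phi, u^\XX_0} - \int_0^t \norm{(S_{t-s}^\XX \phi)\,[\RG(\cdot, u(s)) + \AG(\cdot, u(s))], u^\XX(s)}\, ds \\
&\quad - \int_0^t \int_{\tilde{\QQ}^2} \bigl[S_{t-s}^\XX \phi(q_1) + S_{t-s}^\XX \phi(q_2)\bigr]\, \BG(q_1, q_2, u(s))\, u^\XX(s)(dq_1)\, u^\XX(s)(dq_2)\, ds \\
&\quad + \DG \int_0^t \int_\NN \int_{\QQ^{\xi^\XX}} \Bigl(\sum_{i=1}^{\xi^\XX} S_{t-s}^\XX \phi(q_i^\XX)\Bigr) \zeta_{\xi^\XX}(q^\XX|\xi^\XX)\, dq^\XX\, dp_\XX(\xi^\XX)\, ds,
\end{align*}
and the analogous identity for $u^\YY$; both semigroups being Markov, $\|S_s^\bullet \phi\|_\infty \le \|\phi\|_\infty$ for every $s\ge 0$.

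Now let $u_1, u_2$ be two solutions sharing the initial datum and set $w^\XX := u_1^\XX - u_2^\XX$, $w^\YY := u_1^\YY - u_2^\YY$, with total-variation norm $\|w^\bullet(t)\|_{TV} = \sup_{\|\phi\|_\infty\le 1} |\norm{\phi, w^\bullet(t)}|$. Subtracting the mild identities the protracted-irradiation contributions cancel, and each reaction difference is controlled via the Lipschitz assumption of Hypothesis \ref{HYP:4}(6.ii) together with the bounded kernel of Hypothesis \ref{HYP:1}(3.i), typified by $|\RG(q, u_1(s)) - \RG(q, u_2(s))| \le L_\RG \bar{\Gamma}^\RG (\|w^\XX(s)\|_{TV} + \|w^\YY(s)\|_{TV})$. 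The quadratic $\BG$-term is handled via the telescopic identity
\[
\BG(\cdot, u_1)\, u_1^\XX \otimes u_1^\XX - \BG(\cdot, u_2)\, u_2^\XX \otimes u_2^\XX = [\BG(\cdot, u_1) - \BG(\cdot, u_2)]\, u_1^\XX \otimes u_1^\XX + \BG(\cdot, u_2)\, [u_1^\XX \otimes u_1^\XX - u_2^\XX \otimes u_2^\XX],
\]
together with $\|u_1^\XX \otimes u_1^\XX - u_2^\XX \otimes u_2^\XX\|_{TV} \le 2 C_T \|w^\XX\|_{TV}$ provided by the a priori mass bound. Collecting these estimates with the sup-norm contraction of the semigroups yields
\[
\|w^\XX(t)\|_{TV} + \|w^\YY(t)\|_{TV} \le C \int_0^t \bigl(\|w^\XX(s)\|_{TV} + \|w^\YY(s)\|_{TV}\bigr)\, ds,
\]
for a constant $C$ depending only on $T$, $C_T$, and the structural constants of Hypotheses \ref{HYP:1} and \ref{HYP:4}. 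Gronwall's lemma then forces $w^\XX \equiv w^\YY \equiv 0$.

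The delicate step is the quadratic pairwise-interaction term, where the nonlinearity is twofold: through the rate $\BG(q_1,q_2,\cdot)$, which depends on the measure via the bounded kernel $\Gamma^\BG$, and through the product measure $u^\XX \otimes u^\XX$. The telescopic decomposition, the Lipschitz assumption, and the a priori mass bound $C_T$ together turn this quadratic dependence into an estimate linear in $\|w^\XX\|_{TV}$. The semigroup-duality step is also indispensable: without it, closing a Gronwall estimate would require a test-function norm controlling two spatial derivatives, which is incompatible with a total-variation bound on $w^\bullet$.
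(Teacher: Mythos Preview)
Your proposal is correct and follows essentially the same route as the paper: an a priori mass bound, a mild/semigroup reformulation to absorb $\LL_d^\XX,\LL_d^\YY$, then Lipschitz-plus-telescopic estimates and Gronwall in total variation. If anything, you are slightly more careful than the paper in two places: you explicitly note that the protracted-irradiation contributions cancel in the difference (the paper keeps them in the display but they vanish), and you close Gronwall on the sum $\|w^\XX\|_{TV}+\|w^\YY\|_{TV}$, which is needed since the rates depend on the full pair $u=(u^\XX,u^\YY)$, whereas the paper writes the argument only for the $\XX$-component.
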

\begin{proof}
Arguments similar to the proof in Theorem \ref{THM:E!2} imply that neglecting negative terms and using Grownall's lemma, the following estimate holds
\[
\sup_{t \in [0,T]}\norm{\unit,u^\XX(t)} \leq C\,,\quad \sup_{t \in [0,T]}\norm{\unit,u^\YY(t)} \leq C\,.
\]

Consider then two different solutions $u_1=(u^\XX_1,u^\YY_1)$ and $u_2=(u^\XX_2,u^\YY_2)$ to the equation \eqref{EQN:DLimit}, satisfying 
\[
\sup_{t \in [0,T]} \norm{\unit,u^\XX_1(t)-u^\XX_2(t)}< C(T)< \infty\,.
\]
Denote by $T^\XX$ and $T^\YY$ the semigroups generated by the operators $\LL^\XX_d$ and $\LL^\YY_d$. We thus have for any bounded and measurable functions $f$ such that $\|f\|_\infty \leq 1$,
\begin{equation}\label{EQN:DLimitSem}
\begin{cases}
\norm{f^\XX,u^\XX(t)} &= \norm{T^\XX(t)f^\XX,u^\XX_0}+\\[3pt]
&- \int_0^t\int_{\QQ} \left[ \RG(q,u) + \AG(q,u) \int_{\QQ} \right] T^\XX(t-s)f^\XX(q)u^\XX(s)(dq)ds+\\[3pt]
&-\int_0^t\int_{\tilde{\QQ}^2}\BG(q_1,q_2,u) T^\XX(t-s)(f^\XX(q_1)+f^\XX(q_2))u^\XX(s)(dq_1)u^\XX(s)(dq_2)ds+\\[3pt]
&+\int_0^t \int_{\NN} \int_{\tilde{\QQ}^{\xi^\XX}} \DG \left (\sum_{i=1}^{\xi^\XX}f^\XX(q_i^\XX) \right) \zeta_{\xi^\XX}(\left . q^\XX \right| \xi^\XX) dq^\XX  dp_{\XX}(\xi^\XX) ds\,,\\[5pt]
\norm{f^\YY,u^\YY(t)} &= \norm{T^\YY(t)f^\YY,u^\YY} + \\[3pt]
&+ \int_0^t \int_{\QQ} \AG(q,u) \int_{\QQ} m^{\AG}(\bar{q}|q) T^\YY(t-s)f^\YY(\bar{q})d\bar{q} u^\XX(s)(dq)ds+\\[3pt]
&+ \int_0^t\int_{\tilde{\QQ}^2} \int_{\QQ} \PG(q_1,q_2) \BG(q_1,q_2,u) T^\YY(t-s)f^\YY(\bar{q}) m^{\BG}(\bar{q}|q_1,q_2)d\bar{q}u^\XX(s)(dq_1)u^\XX(s)(dq_2)ds+\\[3pt]
&+\int_0^t  \int_{\NN} \int_{\tilde{\QQ}^{\xi^\YY}} \DG \left (\sum_{i=1}^{\xi^\YY} f^\YY(q_i^\YY) \right) \zeta_{\xi^\YY}(\left . q^\YY \right| \xi^\YY)dq^\YY dp_{\YY}(\xi^\YY) ds\,.
\end{cases}
\end{equation}

We thus have
\begin{equation}\label{EQN:DLimitSemX}
\begin{cases}
&\left |\norm{f^\XX,u^\XX_1(t)-u^\XX_2(t)}\right| \leq \int_0^t\left|\int_{\QQ} \left[ \RG(q,u_1) + \AG(q,u_1) \right] T^\XX(t-s)f^\XX(q)\left( u_1^\XX(s)(dq)-u_2^\XX(s)(dq)\right)\right|ds+\\[3pt]
&+\int_0^t\left|\int_{\tilde{\QQ}^2} \BG(q_1,q_2,u_1) T^\XX(t-s)(f^\XX(q_1)+f^\XX(q_2))\left(u_1^\XX(s)(dq_1)u_1^\XX(s)(dq_q)-u_2^\XX(s)(dq_1)u_2^\XX(s)(dq_1)\right)\right|ds+\\[3pt]
&+\int_0^t\left| \int_{\NN} \int_{\tilde{\QQ}^{\xi^\XX}} \DG \left (\sum_{i=1}^{\xi^\XX}T^\XX(t-s)f^\XX(q_i^\XX) \right) \zeta_{\xi^\XX}(\left . q^\XX \right| \xi^\XX) dp_{\XX}(\xi^\XX) dq^\XX  \right|ds +\\[3pt]
&+\int_0^t\left|\int_{\QQ} \left[ \RG(q,u_2) + \AG(q,u_2) - \RG(q,u_1) - \AG(q,u_1)\int_{\QQ}\right] T^\XX(t-s)f^\XX(q)u_2^\XX(s)(dq)\right|ds+\\[3pt]
&+\int_0^t\left|\int_{\tilde{\QQ}^2}\left(\BG(q_1,q_2,u_2)-\BG(q_1,q_2,u_1)\right) T^\XX(t-s)(f^\XX(q_1)+f^\XX(q_2))u_2^\XX(s)(dq_1)u_2^\XX(s)(dq_1)\right|ds\,.
\end{cases}
\end{equation}

Since it holds that $\| T^\XX(t-s)f^\XX(q)\|_\infty \leq 1$, we have that
\begin{equation}\label{EQN:EstE!L}
\begin{split}
&\left |\left[ \RG(q,u_1) + \AG(q,u_1) \right] T^\XX(t-s)f^\XX(q)\right| \leq \bar{\RG} + \bar{\AG}(1+|v|) \leq C\,,\\
&\left |\BG(q_1,q_2,u_1) T^\XX(t-s)(f^\XX(q_1)+f^\XX(q_2))\right| \leq \bar{\BG}(1+|v|) \leq C\,,\\
&\left | \DG \left (\sum_{i=1}^{\xi^\XX}T^\XX(t-s)f^\XX(q_i^\XX)\right) \right| \leq C\,,\\
&\left| \left[\RG(q,u_2) + \AG(q,u_2) - \RG(q,u_1) - \AG(q,u_1)\right] T^\XX(t-s)f^\XX(q) \right|\leq C \sup_{\|f^\XX\|_\infty \leq 1}\norm{\unit,u_1(t)-u_2(t)}\,,\\
&\left|\left(\BG(q_1,q_2,u_2)-\BG(q_1,q_2,u_1)\right) T^\XX(t-s)(f^\XX(q_1)+f^\XX(q_2)) \right| \leq C \sup_{\|f^\XX\|_\infty \leq 1}\norm{\unit,u_1(t)-u_2(t)}\,.
\end{split}
\end{equation}

We therefore can infer, using estimates \eqref{EQN:EstE!L} in equation \eqref{EQN:DLimitSemX}, that
\[
\left |\norm{f^\XX,u^\XX_1(t)-u^\XX_2(t)}\right| \leq C \int_0^t \sup_{\|f^\XX\|_\infty \leq 1}\norm{f^\XX,u_1(s)-u_2(s)}ds\,.
\]
Using thus Gronwall's lemma, we can finally conclude that for all $t\leq T$,
\[
\sup_{\|f^\XX\|_\infty \leq 1}\norm{f^\XX,u_1(s)-u_2(s)}=0\,,
\]
from which the uniqueness follows.
\end{proof}

\subsection{Step 2: propagation of moments}

\begin{Lemma}\label{LEM:1}
Assume that Hypothesis \ref{HYP:1}-\ref{HYP:2}-\ref{HYP:4} holds, then for any $T>0$, there exists a constant $C:= C(T)>0$ which depends on $T$ such that the following estimates hold
\begin{equation}\label{EQN:Estimate1}
\sup_{\KK} \mathbb{E}\sup_{t \in [0,T]} \norm{\unit,u^{\XX,\KK}(t)}^3 \leq C \,,\quad \sup_{\KK} \mathbb{E}\sup_{t \in [0,T]} \norm{\unit,u^{\YY,\KK}(t)}^3 \leq C \,.
\end{equation}

In particular, we further have
\begin{equation}\label{EQN:Estimate2}
\sup_{\KK} \mathbb{E}\sup_{t \in [0,T]} \norm{f^\XX,u^{\XX,\KK}(t)}^3 \leq C \,,\quad \sup_{\KK} \mathbb{E}\sup_{t \in [0,T]} \norm{f^\YY,u^{\YY,\KK}(t)}^3 \leq C \,.
\end{equation}
\end{Lemma}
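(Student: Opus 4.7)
The plan is to exploit the monotone structure of the $\XX$-dynamics and then to close a Gronwall estimate for the coupled $\YY$-dynamics. The central observation is that reactions of type $\RG$, $\AG$ and $\BG$ can only decrease $\norm{\unit,u^{\XX,\KK}(t)}$, so the only source of growth of the $\XX$-marginal is the protracted-irradiation Poisson measure $\PN^\DG$, whose rescaled intensity $\DG^\KK=\KK\DG$ is exactly compensated by the factor $1/\KK$ carried by each jump increment in the definition \eqref{EQN:ResM} of $u^\KK$.

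For the $\XX$-marginal I would read off from the SDE representation \eqref{EQN:SpatGSM2PI} the pathwise estimate
\[
\sup_{t\in[0,T]}\norm{\unit,u^{\XX,\KK}(t)} \;\le\; \norm{\unit,u^{\XX,\KK}_0} \;+\; \frac{1}{\KK}\sum_{k=1}^{M^{\DG,\KK}(T)}\xi^\XX_k,
\]
where $M^{\DG,\KK}(T)$ is Poisson with parameter $\KK\DG T$ and $(\xi^\XX_k)_k$ are i.i.d.\ copies of $\xi^\XX$ independent of $M^{\DG,\KK}(T)$. The classical closed-form expressions for the moments of a compound Poisson sum, together with Hypothesis \ref{HYP:2}(4.iii), show that $\EE\bigl[(\KK^{-1}\sum_k\xi^\XX_k)^3\bigr]$ is a polynomial in $1/\KK$ with coefficients depending only on $T$, $\DG$ and the first three moments of $\xi^\XX$; combined with Hypothesis \ref{HYP:4}(6.i) this produces the first half of \eqref{EQN:Estimate1}.

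For the $\YY$-marginal I would introduce the localizing stopping times $\tau_n^\KK:=\inf\{t\ge 0:\norm{\unit,u^{\YY,\KK}(t)}\ge n\}$ and use the martingale representation from Lemma \ref{LEM:MartRK} with $f^\YY=\unit$. Because $m^\AG$ and $m^\BG$ are probability kernels by Hypothesis \ref{HYP:1}(3.ii), the predictable compensator reduces to the terms driven by $\AG$, $\BG$ and $\DG$, and by the linear growth conditions (2.ii)--(2.iii) together with the uniform bound (3.i) on $\Gamma$ it is dominated by
\[
C\int_0^{t\wedge\tau_n^\KK}\bigl(1+\norm{\unit,u^\KK(s)}\bigr)\bigl(\norm{\unit,u^{\XX,\KK}(s)}+\norm{\unit,u^{\XX,\KK}(s)}^2\bigr)\,ds \;+\; C\,\DG\,T\,\EE[\xi^\YY].
\]
Raising to the cube, applying the Burkholder--Davis--Gundy inequality to the purely discontinuous martingale with quadratic variation \eqref{EQN:MartQuad1ab}, and taking expectations, the $\XX$-bound already proved absorbs all contributions depending on $u^{\XX,\KK}$ into a constant $C(T)$ uniform in $\KK$. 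Gronwall's lemma then yields a bound on $\EE\sup_{s\le t\wedge\tau_n^\KK}\norm{\unit,u^{\YY,\KK}(s)}^3$ uniform in $n$ and $\KK$, and Fatou's lemma, letting $n\to\infty$, closes the argument, which is meaningful thanks to the non-explosion already established in Theorem \ref{THM:E!2}.

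Estimate \eqref{EQN:Estimate2} is then immediate from \eqref{EQN:Estimate1}: any $f\in C^2_0(\QQ)$ is continuous on the compact $\bar{\QQ}$ and therefore bounded, so pointwise $|\norm{f,u^\KK(t)}|\le\|f\|_\infty\norm{\unit,u^\KK(t)}$. The main technical obstacle I anticipate is the careful bookkeeping of powers of $\KK$ in the $\BG$-term: the double Poisson intensity produces a factor $\KK^2$ coming from $\nu^\XX(dq_1)\nu^\XX(dq_2)=\KK^2\,u^{\XX,\KK}(dq_1)u^{\XX,\KK}(dq_2)$ which must be exactly cancelled by the rescaling $\BG^\KK=\BG/\KK$ and the two $1/\KK$ factors in the jump increment, leaving an $O(\norm{\unit,u^{\XX,\KK}}^2)$ drift compatible with Gronwall.
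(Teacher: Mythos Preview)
Your proposal is correct. The paper's own proof is far more telegraphic: it simply refers back to the computations in the proof of Theorem~\ref{THM:E!2}, i.e.\ localize with $\tau_n$, use an elementary inequality on the $\PN^\DG$-increments of $\norm{\unit,u^\KK}^p$, apply Gronwall to both the $\XX$- and the $\YY$-marginals (the resulting constants being independent of $\KK$ because the rescalings $\DG^\KK=\KK\DG$ and $\BG^\KK=\BG/\KK$ cancel against the $1/\KK$ jump sizes), and close with Fatou.

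Your treatment of the $\XX$-marginal is a genuinely different and cleaner route: by exploiting the monotonicity of the $\XX$-dynamics under $\RG,\AG,\BG$ you avoid Gronwall altogether and read the uniform bound directly off the closed-form third moment of a rescaled compound Poisson sum, which makes the $\KK$-uniformity completely explicit. For the $\YY$-marginal your argument (localization, bound on the compensator via the already-controlled $\XX$-marginal, Gronwall, Fatou) is essentially the same as the paper's, only you spell out the Burkholder--Davis--Gundy step that the paper leaves implicit. Both you and the paper derive \eqref{EQN:Estimate2} from \eqref{EQN:Estimate1} via boundedness of $f^\XX,f^\YY$ on $\bar{\QQ}$.
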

\begin{proof}
The proof follows from computations similar to what is done in the proof of Theorem \ref{THM:E!2} obtaining a similar estimate as in equation \eqref{EQN:Est4}. That is we have,
\[
\mathbb{E}\sup_{t \in [0,T \wedge \tau^\XX_n]} \norm{\unit,u^{\XX,\KK}(t)}^3 \leq C\,,\quad \mathbb{E}\sup_{t \in [0,T \wedge \tau^\YY_n]} \norm{\unit,u^{\YY,\KK}(t)}^3 \leq C\,.
\]

Taking the limit $\tau^\XX_n$ and $\tau^\YY_n$ as $\KK \to \infty$, Fatou's lemma implies equation \eqref{EQN:Estimate1}. Estimate \eqref{EQN:Estimate2} thus follows from the fact that $f^\XX$ and $f^\YY$ are bounded.
\end{proof}

\subsection{Step 3: tightness}

In the following, we will denote by $\Lambda^\KK$ the law of the process $u^\KK = (u^{\XX;\KK},u^{\YY,\KK})$. We then have the following.

\begin{Theorem}\label{THM:Tight}
Assume Hypothesis \ref{HYP:1}-\ref{HYP:2}-\ref{HYP:4} hold, then the sequence of laws $\left (\Lambda^\KK\right)_{\KK \in \mathbb{N}}$ on $\mathcal{D}\left([0,T];\MMPP\right)$ is tight when endowed with the vague topology.
\end{Theorem}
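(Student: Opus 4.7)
The plan is to apply Jakubowski's criterion for tightness in $\mathcal{D}([0,T], \MMPP(\QQ) \times \MMPP(\QQ))$ endowed with the vague topology. This reduces the problem to verifying two conditions: (a) a compact containment condition for the measure-valued process, and (b) tightness in $\mathcal{D}([0,T], \RR)$ of the real-valued processes $\norm{f, u^{\XX;\KK}}$ and $\norm{f, u^{\YY;\KK}}$ for every $f$ in a countable subset of $C_b(\QQ)$ that separates points in $\MMPP(\QQ)$ for the vague topology (e.g.\ a dense subset of $C^2_0(\QQ)$).

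For condition (a), since $\QQ$ is compact, a set of the form $\{\nu \in \MMPP(\QQ) \,:\, \norm{\unit,\nu} \leq M\}$ is compact for the vague topology. The moment estimate from Lemma \ref{LEM:1}, combined with Markov's inequality, then immediately yields that for every $\eta > 0$ one can find $M_\eta$ such that $\mathbb{P}\left(\sup_{t \leq T} \norm{\unit,u^\KK(t)} > M_\eta\right) < \eta$ uniformly in $\KK$, giving the required compact containment.

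For condition (b), the plan is to use Aldous's criterion applied to the semi-martingale decomposition provided by Lemma \ref{LEM:MartRK}: write $\norm{f^\XX,u^{\XX;\KK}(t)} = \norm{f^\XX,u^{\XX;\KK}_0} + A^{\KK,\XX}_{f}(t) + \Mar^{\XX,\KK}(t)$, and analogously for $\YY$, where $A^{\KK,\XX}_{f}$ collects the drift and jump compensators and $\Mar^{\XX,\KK}$ is the $L^2$-martingale. Using Hypothesis \ref{HYP:1}--\ref{HYP:2}--\ref{HYP:4} together with the uniform third moment bound of Lemma \ref{LEM:1}, one checks that for any pair of stopping times $\tau_\KK \leq \tau_\KK + \delta_\KK \leq T$ with $\delta_\KK \to 0$,
\[
\EE\left|A^{\KK,\XX}_{f}(\tau_\KK + \delta_\KK) - A^{\KK,\XX}_{f}(\tau_\KK)\right| \leq C\, \delta_\KK\, \EE\left[1 + \sup_{s \leq T}\norm{\unit,u^\KK(s)}^2\right] \leq C \delta_\KK \,,
\]
while the quadratic variation expressions in Lemma \ref{LEM:MartRK} carry an explicit $1/\KK$ prefactor, so that $\EE\left[(\Mar^{\XX,\KK}(\tau_\KK+\delta_\KK) - \Mar^{\XX,\KK}(\tau_\KK))^2\right] \leq C \delta_\KK / \KK$. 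Both increments thus vanish in probability as $\KK \to \infty$ and $\delta_\KK \to 0$, which is exactly Aldous's condition.

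The main obstacle is controlling the pairwise interaction contribution to $A^{\KK,\XX}_{f}$, which has the quadratic form $\int_0^t \int_{\tilde{\QQ}^2} \BG(q_1,q_2,\nu)\,g(q_1,q_2)\,u^{\XX;\KK}(s)(dq_1)u^{\XX;\KK}(s)(dq_2)ds$: since $\BG$ only satisfies a linear growth condition in $\norm{\Gamma^\BG_{q_1,q_2},\nu}$, bounding this term in $L^1$ requires the full cubic moment bound of Lemma \ref{LEM:1}, and is the reason the third moment appears in its statement. The protracted irradiation term similarly needs the $p$-th moment assumption on $\xi^\XX,\xi^\YY$ in Hypothesis \ref{HYP:2}, but is otherwise straightforward once one integrates against the compensator. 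Once these two estimates are established, Jakubowski's criterion yields tightness of $(\Lambda^\KK)_{\KK \in \mathbb{N}}$ on $\mathcal{D}([0,T],\MMPP(\QQ)\times\MMPP(\QQ))$, completing the proof.
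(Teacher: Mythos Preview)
Your proposal is correct and follows essentially the same route as the paper. The only cosmetic difference is that you invoke Jakubowski's criterion and check compact containment explicitly, whereas the paper cites Roelly's criterion \cite{roelly1986criterion} directly; in both cases the heart of the argument is the Aldous--Rebolledo criterion applied to the semimartingale decomposition of $\norm{f,u^{\XX;\KK}}$ and $\norm{f,u^{\YY;\KK}}$ from Lemma~\ref{LEM:MartRK}, with the finite-variation and quadratic-variation increments controlled via the uniform third-moment bound of Lemma~\ref{LEM:1}.
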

\begin{proof}
We equip $\MMPP$ with the vague topology; to prove the tightness of the sequence of laws $\Lambda^\KK$, using \cite{roelly1986criterion}, it is enough to show that the sequence of laws of the processes $\norm{f^\XX,u^{\XX;\KK}}$ and $\norm{f^\YY,u^{\YY;\KK}}$ are tight in $\mathcal{D}\left([0,T];\RR\right)$ for any function $f^\XX$ and $f^\YY$. As standard, in order to accomplish this, we employ the Aldous criterion \cite{aldous1978stopping} and Rebolledo criterion \cite{joffe1986weak}.

Notice first that, using the fact that $f^\XX$ and $f^\YY$ are bounded, we have already proved estimates \eqref{EQN:Estimate2}. Consider thus $\delta>0$ and two stopping times $(\tau_1,\tau_2)$ satisfying a.s. $0 \leq \tau_1 \leq \tau_2 \leq \tau_2+\delta \leq T$. Using Doob's inequality, together with estimate \eqref{EQN:Estimate1} and the martingale representation given in Lemma \ref{LEM:MartRK},
\[
\begin{split}
\mathbb{E}\left[\norm{\Mar^\XX}(\tau_2) - \norm{\Mar^\XX}(\tau_1)\right] &\leq C \delta \mathbb{E}\left[1+\sup_{t \in [0,T]} \norm{\unit,u^{\XX;\KK}}^3+\sup_{t \in [0,T]} \norm{\unit,u^{\YY;\KK}}^3 \right]\,,\\
\mathbb{E}\left[\norm{\Mar^\YY}(\tau_2) - \norm{\Mar^\YY}(\tau_1)\right] &\leq C \delta \mathbb{E}\left[1+\sup_{t \in [0,T]} \norm{\unit,u^{\XX;\KK}}^3+\sup_{t \in [0,T]} \norm{\unit,u^{\YY;\KK}}^3 \right]\,.
\end{split}
\]

Similarly, denoting by
\begin{equation}\label{EQN:FiniteVar}
\begin{cases}
A^\XX(t) &:=  \int_0^t \norm{\LL^\XX_d f^\XX (x),u^{\XX;\KK}(s)}ds +\\[3pt]
&- \int_0^t\int_{\QQ} \left[ \RG^\KK(q,\nu(s)) + \AG^\KK(q,\nu) \right] f^\XX(q)u^{\XX;\KK}(s)(dq)ds+\\[3pt]
&-\int_0^t\int_{\tilde{\QQ}^2} \BG^\KK(q_1,q_2,\nu) (f^\XX(q_1)+f^\XX(q_2))u^{\XX;\KK}(s)(dq_1)u^{\XX;\KK}(s)(dq_2)ds+\\[3pt]
&+\int_0^t \int_{\NN} \int_{\tilde{\QQ}^{\xi^\XX}} \DG^\KK \left (\sum_{i=1}^{\xi^\XX}f^\XX(q_i^\XX) \right) \zeta_{\xi^\XX}(\left . q^\XX \right| \xi^\XX) dq^\XX  dp_{\XX}(\xi^\XX) ds\,,\\[5pt]
A^\YY(t) &:= \int_0^t \norm{\LL^\YY_d f^\YY (x),u^{\YY;\KK}(s)}ds +\\[3pt]
&+ \int_0^t \int_{\QQ} \AG^\KK(q,\nu) \int_{\QQ} m^{\AG}(\bar{q}|q) f^\YY(\bar{q})d\bar{q} u^{\XX;\KK}(s)(dq)ds+\\[3pt]
&+ \int_0^t\int_{\tilde{\QQ}^2} \int_{\QQ} \PG(q_1,q_2) \BG^\KK(q_1,q_2,\nu) f^\YY(\bar{q}) m^{\BG}(\bar{q}|q_1,q_2)d\bar{q}u^{\XX;\KK}(s)(dq_1)u^{\XX;\KK}(s)(dq_2)ds+\\[3pt]
&+\int_0^t \int_{\NN} \int_{\tilde{\QQ}^{\xi^\YY}} \DG^\KK \left (\sum_{i=1}^{\xi^\YY} f^\YY(q_i^\YY) \right) \zeta_{\xi^\YY}(\left .q^\YY \right| \xi^\YY) d q^{\YY} dp_{\YY}(\xi^\YY) ds\,,
\end{cases}
\end{equation}
the finite variation part of $\norm{f^\XX,u^{\XX;\KK}(\tau_2)} - \norm{f^\XX,u^{\XX;\KK}(\tau_1)}$ and $\norm{f^\YY,u^{\YY;\KK}(\tau_2)} - \norm{f^\YY,u^{\YY;\KK}(\tau_1)}$ we have that
\[
\begin{split}
\mathbb{E}\left[\norm{A^\XX}(\tau_2) - \norm{A^\XX}(\tau_1)\right] &\leq C \delta \mathbb{E}\left[1+\sup_{t \in [0,T]} \norm{\unit,u^{\XX;\KK}}^3+\sup_{t \in [0,T]} \norm{\unit,u^{\YY;\KK}}^3 \right]\,,\\
\mathbb{E}\left[\norm{A^\YY}(\tau_2) - \norm{A^\YY}(\tau_1)\right] &\leq C \delta \mathbb{E}\left[1+\sup_{t \in [0,T]} \norm{\unit,u^{\XX;\KK}}^3+\sup_{t \in [0,T]} \norm{\unit,u^{\YY;\KK}}^3 \right]\,.\\
\end{split}
\]

The claim thus follows.
\end{proof}

\subsection{Step 4: identification of the limit}

\begin{Theorem}\label{THM:IdLim}
Assume Hypothesis \ref{HYP:1}-\ref{HYP:2}-\ref{HYP:4} hold, denote by $\Lambda$ the limiting law of the sequence of laws $\left (\Lambda^\KK\right)_{\KK \in \mathbb{N}}$. Denote by $u$ the process with law $\Lambda$. Then $u$ is a.s. strongly continuous and it solves equation \eqref{EQN:DLimit}.
\end{Theorem}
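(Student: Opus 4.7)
The plan is to combine the tightness established in Theorem \ref{THM:Tight}, the martingale decomposition of Lemma \ref{LEM:MartRK}, the moment bounds of Lemma \ref{LEM:1}, and the uniqueness statement of Theorem \ref{THM:UniqK}. By Prokhorov's theorem the tight family $(\Lambda^\KK)$ admits a subsequential limit $\Lambda$; invoking Skorohod's representation theorem I would realise the convergence on a common probability space, so that $u^\KK \to u$ almost surely in $\mathcal{D}([0,T],\MMPP(\QQ) \times \MMPP(\QQ))$ endowed with the vague topology.

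For the almost sure continuity of $u$, I would estimate the jumps of $\norm{f^\XX,u^{\XX;\KK}(t)}$. The reactions $\RG,\AG,\BG$ produce jumps of size at most $C\|f^\XX\|_\infty/\KK$, while the protracted-irradiation atom of $\PN^{\DG}$ produces a jump of size at most $\frac{\|f^\XX\|_\infty}{\KK}\xi^\XX$. Using Hypothesis \ref{HYP:2}(4.iii) on the moments of $\xi^\XX$, together with the fact that the number of $\DG$-events in $[0,T]$ is Poisson with parameter $T\KK\DG$, a direct computation shows $\EE \sum_{s\le T}|\Delta \norm{f^\XX,u^{\XX;\KK}(s)}|^2 = O(1/\KK)$. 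By Markov's inequality $\mathbb{P}(\sup_{t\le T}|\Delta \norm{f^\XX,u^{\XX;\KK}(t)}| > \varepsilon) \to 0$ for every $\varepsilon>0$, which together with convergence in the Skorohod topology forces the limiting path to lie in $\mathcal{C}([0,T],\MMPP(\QQ) \times \MMPP(\QQ))$.

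To identify the limit I would fix $f^\XX,f^\YY \in C^2_0(\QQ)$ and introduce the functional $\Psi_t(u)$ equal to the difference between the left-hand and right-hand sides of \eqref{EQN:DLimit}. By Lemma \ref{LEM:MartRK} one has $\Psi_t(u^\KK) = (\Mar^{\XX;\KK}(t), \Mar^{\YY;\KK}(t))$, and the predictable quadratic-variation formulas \eqref{EQN:MartQuad1ab} carry an overall prefactor $1/\KK$; hence Doob's maximal inequality combined with Lemma \ref{LEM:1} gives
\begin{equation*}
\EE\sup_{t\le T}|\Mar^{\XX;\KK}(t)|^2 + \EE\sup_{t\le T}|\Mar^{\YY;\KK}(t)|^2 \le \frac{C}{\KK} \to 0.
\end{equation*}
It remains to prove $\Psi_t(u^\KK) \to \Psi_t(u)$ in probability. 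The linear-in-$u^\KK$ integrals against $\RG,\AG$ are handled by the continuity and Lipschitz dependence of the rates from Hypothesis \ref{HYP:4}(6.ii), the uniform moment bounds of Lemma \ref{LEM:1}, and a standard dominated-convergence argument for vaguely convergent measures with uniformly bounded total mass.

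The delicate step, which I expect to be the main obstacle, is the nonlinear pairwise-interaction integral $\int_{\tilde{\QQ}^2}\BG(q_1,q_2,u)(f^\XX(q_1)+f^\XX(q_2))\,u^\XX(dq_1)u^\XX(dq_2)$: passing to the limit requires convergence of the product measure $u^{\XX;\KK}\otimes u^{\XX;\KK}$ on $\tilde{\QQ}^2$, not on all of $\QQ^2$. Since $u^{\XX;\KK}$ is purely atomic with atoms of mass $1/\KK$, the diagonal contribution is bounded by $N^{\XX;\KK}(t)/\KK^2 = O(1/\KK)$ thanks to Lemma \ref{LEM:1}; hence vague convergence of $u^{\XX;\KK}\otimes u^{\XX;\KK}$ on $\QQ^2$ and the Lipschitz dependence of $\BG$ on the measure variable suffice to close the argument. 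The protracted-irradiation integrals converge by Hypothesis \ref{HYP:2}(4.iii). Having shown $\Psi_t(u)=0$ almost surely for every $t \in [0,T]$ and every pair $(f^\XX,f^\YY) \in (C^2_0(\QQ))^2$, the limit $u$ solves \eqref{EQN:DLimit}, and Theorem \ref{THM:UniqK} identifies $u$ uniquely as the deterministic solution of \eqref{EQN:DLimit}, upgrading the subsequential convergence to full convergence of the sequence $(u^\KK)$.
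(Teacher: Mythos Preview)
Your proposal is correct and follows essentially the same route as the paper: identify $\Psi_t(u^\KK)$ with the martingale of Lemma \ref{LEM:MartRK}, show its quadratic variation is $O(1/\KK)$, and pass to the limit using continuity of $\Psi_t$ at the (continuous) limit point together with uniform integrability from the third-moment bound in Lemma \ref{LEM:1}. You are in fact more careful than the paper on two points---your jump-size estimate correctly accounts for a $\DG$-event creating $\xi^\XX$ lesions at once (the paper simply asserts a uniform $1/\KK$ bound), and you explicitly dispose of the diagonal contribution in the bilinear $\BG$-term---but these are refinements of the same argument, not a different approach.
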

\begin{proof}
The fact that $u$ is a.s. strongly continuous follows from the fact that
\[
\sup_{t \in [0,T]}\sup_{\|f^\XX\|_{\infty} \leq 1}\left |\norm{f^\XX,u^{\XX;\KK}(t)} - \norm{f^\XX,u^{\XX;\KK}(t_-)} \right|\leq \frac{1}{\KK}\,.
\]

Consider $t\leq T$, $f^\XX$, $f^\YY$ and $\mathrm{u} = (\mathrm{u}^\XX,\mathrm{u}^\YY) \in \mathcal{D}\left ([0,T],\MMPP\times \MMPP\right)$; define the functionals
\[
\Psi^\XX : \mathcal{D}\left ([0,T],\MMPP\times \MMPP\right) \to \RR\,,
\]
as
\[
\begin{split}
&\Psi^\XX_{t,f^\XX} (\mathrm{u}^\XX,\mathrm{u}^\YY) = \norm{f^\XX,u^\XX(t)} - \norm{f^\XX,u^\XX_0} - \int_0^t \norm{\LL^\XX_d f^\XX (x),u^\XX(s)}ds +\\[3pt]
&+ \int_0^t\int_{\QQ} \left[ \RG(q,u) + \AG(q,u)\right] f^\XX(q)u^\XX(s)(dq)ds+\\[3pt]
&+\int_0^t\int_{\tilde{\QQ}^2}\BG(q_1,q_2,u) (f^\XX(q_1)+f^\XX(q_2))u^\XX(s)(dq_1)u^\XX(s)(dq_2)ds+\\[3pt]
&-\int_0^t \int_{\NN} \int_{\tilde{\QQ}^{\xi^\XX}} \DG \left (\sum_{i=1}^{\xi^\XX}f^\XX(q_i^\XX) \right) \zeta_{\xi^{\XX}}(\left . q^\XX\right| \xi^\XX)dq^{\XX} dp_{\XX}(\xi^\XX)  ds\,,\\[5pt]
&\Psi^\YY_{t,f^\YY} (\mathrm{u}^\XX,\mathrm{u}^\YY) = \norm{f^\YY,u^\YY(t)} - \norm{f^\YY,u^\YY} - \int_0^t \norm{\LL^\YY_d f^\YY (x),u^\YY(s)}ds +\\[3pt]
&- \int_0^t \int_{\QQ} \AG(q,u) \int_{\QQ} m^{\AG}(\bar{q}|q) f^\YY(\bar{q})d\bar{q} u^\XX(s)(dq)ds+\\[3pt]
&- \int_0^t\int_{\tilde{\QQ}^2} \int_{\QQ} \PG(q_1,q_2) \BG(q_1,q_2,u) f^\YY(\bar{q}) m^{\BG}(\bar{q}|q_1,q_2)d\bar{q}u^\XX(s)(dq_1)u^\XX(s)(dq_2)ds+\\[3pt]
&-\int_0^t \int_{\NN^2} \int_{\tilde{\QQ}^{\xi^\YY}} \DG \left (\sum_{i=1}^{\xi^\YY} f^\YY(q_i^\YY) \right) \zeta_{\xi^{\YY}}(\left .q^\YY \right|\xi^\YY) dq^\YY dp_{\YY}(\xi^\YY) ds\,.
\end{split}
\]

We aim to show that, for any $t \leq T$,
\[
\mathbb{E}\left | \Psi^\XX_{t,f^\XX} (\mathrm{u}^\XX,\mathrm{u}^\YY) \right| = \mathbb{E}\left |  \Psi^\YY_{t,f^\YY} (\mathrm{u}^\XX,\mathrm{u}^\YY) \right| = 0\,.
\]

Using Lemma \ref{LEM:MartRK} we have that
\[
\MM^\XX(t) = \Psi^\XX_{t,f^\XX} (\mathrm{u}^{\XX;\KK},\mathrm{u}^{\YY;\KK})\,,\quad \MM^\YY(t) = \Psi^\YY_{t,f^\YY} (\mathrm{u}^{\XX;\KK},\mathrm{u}^{\YY;\KK})\,.
\]

Using thus Lemma \ref{LEM:MartRK} together with estimate \eqref{EQN:Estimate1} and Hypothesis \ref{HYP:1}-\ref{HYP:2}-\ref{HYP:4}, we have that
\[
\begin{split}
&\mathbb{E} \left | \MM^{\XX;\KK}(t) \right|^2 = \mathbb{E} \left \langle \MM^{\XX;\KK} \right \rangle (t) \leq \frac{C}{\KK}\,,\\
&\mathbb{E} \left | \MM^{\YY;\KK}(t) \right|^2 = \mathbb{E} \left \langle \MM^{\YY;\KK} \right \rangle (t) \leq \frac{C}{\KK}\,,\\
\end{split}
\]
which goes to 0 as $\KK \to \infty$. Therefore, we have
\[
\begin{split}
&\lim_{\KK \to \infty}\mathbb{E} \left | \Psi^\XX_{t,f^\XX} (\mathrm{u}^{\XX;\KK},\mathrm{u}^{\YY;\KK})\right | = 0\,,\\
&\lim_{\KK \to \infty}\mathbb{E} \left | \Psi^\YY_{t,f^\XX} (\mathrm{u}^{\XX;\KK},\mathrm{u}^{\YY;\KK})\right | = 0\,.
\end{split}
\]

Since $\mathrm{u}$ is a.s. strongly continuous and the functions $f^\XX$ and $f^\YY$ are bounded, then the functionals $ \Psi^\XX_{t,f^\XX}$ and $ \Psi^\YY_{t,f^\YY}$ are a.s. continuous at $\mathrm{u}$. Also, for any $\mathcal{D}\left ([0,T],\MMPP\times \MMPP\right)$ we have
\[
\begin{split}
&\left | \Psi^\XX_{t,f^\XX} (\mathrm{u}^{\XX;\KK},\mathrm{u}^{\YY;\KK}) \right | \leq C \sup_{s \in [0,T]}\left(1 + \norm{\unit,u^\XX(s)}^2 + \norm{\unit,u^\YY(s)}^2 \right)\,,\\
&\left | \Psi^\YY_{t,f^\XX} (\mathrm{u}^{\XX;\KK},\mathrm{u}^{\YY;\KK})\right | \leq C \sup_{s \in [0,T]}\left(1 + \norm{\unit,u^\XX(s)}^2 + \norm{\unit,u^\YY(s)}^2 \right)\,.
\end{split}
\]

Therefore the sequences $\left( \Psi^\XX_{t,f^\XX} (\mathrm{u}^{\XX;\KK},\mathrm{u}^{\YY;\KK})\right)_{\KK \in \mathbb{N}}$ and $\left( \Psi^\YY_{t,f^\YY} (\mathrm{u}^{\XX;\KK},\mathrm{u}^{\YY;\KK})\right)_{\KK \in \mathbb{N}}$ are uniformly integrable so that
\[
\begin{split}
&\lim_{\KK \to \infty}\mathbb{E} \left | \Psi^\XX_{t,f^\XX} (\mathrm{u}^{\XX;\KK},\mathrm{u}^{\YY;\KK})\right | = \mathbb{E} \left | \Psi^\XX_{t,f^\XX} (\mathrm{u}^{\XX},\mathrm{u}^{\YY})\right |\,,\\
&\lim_{\KK \to \infty}\mathbb{E} \left | \Psi^\YY_{t,f^\YY} (\mathrm{u}^{\XX;\KK},\mathrm{u}^{\YY;\KK})\right | = \mathbb{E} \left | \Psi^\YY_{t,f^\YY} (\mathrm{u}^{\XX},\mathrm{u}^{\YY})\right |\,,\\
\end{split}
\]
which concludes the proof.
\end{proof}

\begin{Theorem}\label{THM:TightW}
Assume Hypothesis \ref{HYP:1}-\ref{HYP:2}-\ref{HYP:4} hold, then the sequence of laws $\left (\Lambda^\KK\right)_{\KK \in \mathbb{N}}$ on $\mathcal{D}\left([0,T];\MMPP\right)$ is tight when endowed with the weak topology.
\end{Theorem}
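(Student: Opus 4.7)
The plan is to upgrade the vague tightness already established in Theorem \ref{THM:Tight} to tightness under the weak topology on $\MMPP(\QQ)$. Since $\QQ$ is compact, the obstruction to weak compactness in $\MMPP$ is exactly uniform control on the total mass: the vague topology already captures the ``shape'' of the measures through test functions in $C^2_0(\QQ)$, but it does not see constants, whereas the weak topology does. Accordingly, I will combine Theorem \ref{THM:Tight} with tightness in $\mathcal{D}([0,T];\RR)$ of the two total-mass processes $\norm{\unit,u^{\XX;\KK}}$ and $\norm{\unit,u^{\YY;\KK}}$.

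The first step is to observe that the constant function $\unit$ belongs to $C^2_0(\QQ)$ in the sense used here, because $\nabla \unit \equiv 0$ satisfies the Neumann condition on $\partial\QQ$ trivially. Thus the martingale identities of Lemma \ref{LEM:MartRK} apply with $f^\XX = f^\YY = \unit$, producing explicit martingales $\Mar^\XX$ and $\Mar^\YY$ whose finite-variation parts involve only $\RG$, $\AG$, $\BG$ and $\DG$ paired against $u^{\XX;\KK}$, $u^{\YY;\KK}$. Invoking the bounds in Hypothesis \ref{HYP:1}--\ref{HYP:2}--\ref{HYP:4} and the moment control of Lemma \ref{LEM:1}, the predictable quadratic variations and the total variations of the finite-variation parts are $O(\delta)$ in expectation over any stopping-time interval of length $\delta$, exactly as in the bounds derived inside the proof of Theorem \ref{THM:Tight}. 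By the Aldous--Rebolledo criterion this yields tightness of $\norm{\unit,u^{\XX;\KK}}$ and $\norm{\unit,u^{\YY;\KK}}$ in $\mathcal{D}([0,T];\RR)$.

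Next I would establish compact containment in the weak topology: for any $\varepsilon>0$, Lemma \ref{LEM:1} together with Markov's inequality gives an $M_\varepsilon>0$ such that
\[
\sup_{\KK} \prob \left( \sup_{t \in [0,T]} \norm{\unit,u^{\KK}(t)} > M_\varepsilon \right) < \varepsilon.
\]
The set $\mathcal{K}_\varepsilon := \{\mu \in \MMPP(\QQ) : \mu(\QQ) \leq M_\varepsilon\}$ is weakly compact since $\QQ$ is compact, so the processes $(u^{\KK})$ remain in $\mathcal{K}_\varepsilon$ uniformly in $t$ with probability at least $1-\varepsilon$. Combined with the vague tightness of Theorem \ref{THM:Tight} and the tightness of the total-mass processes, a standard application of Jakubowski's criterion (using as separating family both the vague test functions in $C^2_0(\QQ)$ and the constant $\unit$) gives tightness in $\mathcal{D}([0,T];\MMPP(\QQ))$ for the weak topology. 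Equivalently, one can note that a set $A \subset \mathcal{D}([0,T];\MMPP(\QQ))$ is relatively weakly compact whenever it is vaguely tight and the associated total-mass trajectories are tight in $\mathcal{D}([0,T];\RR)$, which is precisely what the previous two steps provide.

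The routine calculations are all direct analogues of estimates already performed in Theorems \ref{THM:Tight} and \ref{THM:IdLim}, so no new moment estimates beyond Lemma \ref{LEM:1} are needed. The main conceptual point---and the only genuine obstacle---is verifying that the admissible test function class contains constants, which is where the Neumann-type definition of $C^2_0(\QQ)$ and compactness of $\QQ$ both play a role; once $\unit$ is an allowed test function, the upgrade from vague to weak tightness is immediate from Lemma \ref{LEM:MartRK} and Lemma \ref{LEM:1}.
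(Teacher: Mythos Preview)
Your proof is correct, and the key observation---that $\unit \in C^2_0(\QQ)$ thanks to the Neumann condition, so the Aldous--Rebolledo estimates of Theorem~\ref{THM:Tight} extend to the total-mass processes---is exactly the one the paper uses. The difference lies only in how the upgrade from vague to weak tightness is packaged.

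The paper's own proof is considerably shorter: it invokes a criterion of M\'el\'eard--Roelly, which says that vague tightness can be upgraded to weak tightness once the limit process is known to be a.s.\ continuous and the total-mass processes $\norm{\unit,u^{\XX;\KK}}$, $\norm{\unit,u^{\YY;\KK}}$ converge in $\mathcal{D}([0,T];\RR)$. Continuity of the limit was already obtained in Theorem~\ref{THM:IdLim}, and convergence of the masses follows by repeating the calculations with $f^\XX=f^\YY=\unit$. So the paper leans on the prior identification of the limit and an external lemma.

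Your route via Jakubowski's criterion and explicit compact containment is more self-contained: it does not need Theorem~\ref{THM:IdLim} as input, nor the M\'el\'eard--Roelly reference, and it makes the role of the moment bound in Lemma~\ref{LEM:1} explicit. The price is a slightly longer argument. Either approach is fine here; the paper's is terser because the continuity of the limit is already available at this point in the exposition.
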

\begin{proof}
Using \cite{meleard1993convergences}, since the limiting process is continuous, repeating the above calculations with $f^\XX = f^\YY = 1$, we obtain that the sequences $\left ( \norm{\unit,u^{\XX;\KK}} \right )_{\KK}$ and $\left (\norm{\unit,u^{\YY;\KK}} \right )_{\KK}$ converge to $\norm{\unit,u^{\XX}}$ and to $\norm{\unit,u^{\YY}}$ in $\mathcal{D}\left ([0,T];\RR\right)$.
\end{proof}

\subsection{Step 5: proof of the convergence theorem}

\begin{proof}[proof of Theorem \ref{THM:Conv}]
Putting together Theorems \ref{THM:UniqK}-\ref{THM:IdLim}-\ref{THM:TightW}, the claim follows.
\end{proof}

\section{Conclusions}

In the present paper, we introduced a general stochastic model that describes the formation and repair of radiation-induced DNA damage. The derived model generalizes the previously studied model, \cite{cordoni2021generalized,cordoni2022cell,cordoni2022multiple}, including a spatial description, allowing for pairwise interaction of cluster damages that might depend on the distances between damages, as well as a general description of the effect of radiation on biological tissue under a broad range of irradiation condition. We studied the mathematical well-posedness of the system as well as we characterize the large system behavior. Further, we believe that the derived model could play a role in describing an effect of recent interest in radiobiology, called the \textit{FLASH effect}. In particular, at extremely high rates of particle delivery, it has been empirically seen that the unwanted effects of radiation on healthy tissue decrease while the killing effect on the tumor is maintained. Although numerous studies on the topic, the physical and biological mechanism at the very core of this effect is today largely unknown. It is nonetheless believed that spatial interactions of particles can play a major role in the origin of this effect. Therefore, the model introduced in the present research can have the generality to provide a stochastic description of the effect that spatial interdependence between particles can have on biological tissue.

\cleardoublepage
\bibliographystyle{apalike}

\bibliography{bib}

\end{document}